\newtheorem{theorem}{Theorem}[section]
\newaliascnt{lemma}{theorem}
\newtheorem{lemma}[lemma]{Lemma}
\newaliascnt{proposition}{theorem}
\newtheorem{proposition}[proposition]{Proposition}
\newaliascnt{corollary}{theorem}
\newtheorem{corollary}[corollary]{Corollary}
\newaliascnt{conjecture}{theorem}
\newtheorem{conjx}{Conjecture}
\newaliascnt{questx}{conjx}
\theoremstyle{definition}
\newaliascnt{example}{theorem}
\newtheorem{example}[example]{Example}
\def\tagform@#1{\maketag@@@{\ignorespaces#1\unskip\@@italiccorr}}
\let\orgtheequation\theequation
\def\theequation{(\orgtheequation)}
\def\equationautorefname~{}
\newtheorem*{remark}{Remark}
\newcommand{\arxiv}[1]{%
 \href{http://front.math.ucdavis.edu/#1}{ArXiv:#1}}
\newcommand{\h}{H}
\newcommand{\go}{G_1}
\newcommand{\gt}{G_2}
\newcommand{\ho}{H_1}
\newcommand{\htwo}{H_2}
\newcommand{\B}{{\mathbb B}}
\newcommand{\e}{\varepsilon}
\newcommand{\R}{{\mathbb R}}
\newcommand{\Rn}{{\mathbb R}^n}
\newcommand{\sinc}{\operatorname{sinc}}
\newcommand{\sech}{\operatorname{sech}}
\newcommand{\csch}{\operatorname{csch}}
\newcommand{\sinch}{\operatorname{sinch}}
\newcommand{\sign}{\operatorname{sign}}
\newcommand{\Krejcirik}{{Krej\v{c}i\v{r}\'{\i}k}}
\begin{document}

\title[Robin Laplacian --- conjectures and rectangles]{The Robin Laplacian --- spectral conjectures, rectangular theorems}
\author[]{Richard S. Laugesen}
\address{Department of Mathematics, University of Illinois, Urbana,
IL 61801, U.S.A.}
\email{Laugesen\@@illinois.edu}
\date{\today}

\begin{abstract}
The first two eigenvalues of the Robin Laplacian are investigated along with their gap and ratio. Conjectures by various authors for arbitrary domains are supported here by new results for rectangular boxes. 

Conjectures with fixed Robin parameter include: a strengthened Rayleigh--Bossel inequality for the first eigenvalue of a convex domain under area normalization; a Szeg\H{o}-type upper bound on the second eigenvalue of a convex domain; the gap conjecture saying the line segment minimizes the spectral gap under diameter normalization; and the Robin--PPW conjecture on maximality of the spectral ratio for the ball. Questions for a varying Robin parameter include monotonicity of the spectral gap and the spectral ratio, as well as  concavity of the second eigenvalue. 

Results for rectangular domains include that: the square minimizes the first eigenvalue among rectangles under area normalization, when the Robin parameter $\alpha \in \R$ is scaled by perimeter; that the square maximizes the second eigenvalue for a sharp range of $\alpha$-values; that the line segment minimizes the Robin spectral gap under diameter normalization for each $\alpha \in \R$; and the square maximizes the spectral ratio among rectangles when $\alpha>0$. Further, the spectral gap of each rectangle is shown to be an increasing function of the Robin parameter, and the second eigenvalue is concave with respect to $\alpha$. 

Lastly, the shape of a Robin rectangle can be heard from just its first two frequencies, except in the Neumann case.  
\end{abstract}

\maketitle

\section{\bf Introduction}
\label{intro}

New shape optimization conjectures are developed and old ones revisited for the first two eigenvalues of the Robin Laplacian. Along the way, conjectures are supported with theorems on the special case of rectangular domains. 

Shape optimization problems for the spectrum of the Robin Laplacian 
\begin{equation*}\label{robinproblem}
\begin{split}
- \Delta u & = \lambda u \ \quad \text{in $\Omega$,} \\
\frac{\partial u}{\partial\nu} + \alpha u & = 0 \qquad \text{on $\partial \Omega$,} 
\end{split}
\end{equation*}
have resolutely resisted techniques employed on the Neumann and Dirichlet endpoint cases ($\alpha=0$ and $\alpha=\infty$ respectively). For example, Rayleigh's conjecture that the ball minimizes the first eigenvalue among all domains of given volume was proved for Dirichlet boundary conditions by Faber and Krahn  in the 1920s, using rearrangement methods. The Neumann case  is trivial since the first eigenvalue is zero for every domain. Yet the Robin case of the conjecture, which lies between the Neumann and Dirichlet ones, was established only in the 1980s in the plane by Bossel \cite{B86}. Her extremal length methods were extended to higher dimensions by Daners \cite{D06} in 2006, followed in 2010 by a new shape optimization approach of Bucur and Giacomini \cite{BG10}. 

Lurking beyond the Neumann case lie the negative Robin parameters, for which Bareket \cite{B77} conjectured the ball might maximize the first eigenvalue among domains of given volume. Freitas and \Krejcirik\ \cite{FK15} disproved this conjecture in general with an annular counterexample, but they succeeded in proving it in $2$ dimensions when the negative Robin parameter is sufficiently close to $0$. For the second eigenvalue with negative Robin parameter, recent papers by Freitas and Laugesen \cite{FL18a,FL18b} generalize to a natural range of  parameter values the sharp Neumann upper bounds of Szeg\H{o} \cite{S54} and Weinberger \cite{W56}, with the ball being the maximizer.  

\subsection*{Overview of results} Rectangles are everyone's first choice when seeking computable examples. The Neumann and Dirichlet spectra of rectangles are completely explicit, but the Robin eigenvalues must be determined from transcendental equations (as collected in \autoref{identifyinginterval} and \autoref{identifying}), and thus are more complicated to extremize. Both positive and negative Robin parameters will be considered. Negative Robin parameters correspond in the heat equation to non-physical boundary conditions, with ``heat flowing from cold to hot''. Negative parameters do arise in a physically sensible way in a model for surface superconductivity \cite{GS07}. In any case, from a mathematical perspective the negative parameter regime is a natural continuation of the positive parameter situation. 

A \textbf{rectangular box} in $\Rn$ is the Cartesian product of $n$ open intervals. The edges can be taken parallel to the coordinate axes, by rotational invariance of the Laplacian. A \textbf{cube} is a box whose edges all have the same length. In $2$ dimensions the box is a rectangle, and the cube is a square. 

For rectangular boxes of given volume, \autoref{firsttwo} illustrates the following six results, two of which concern 
\begin{figure}
\begin{center}
\includegraphics[scale=0.45]{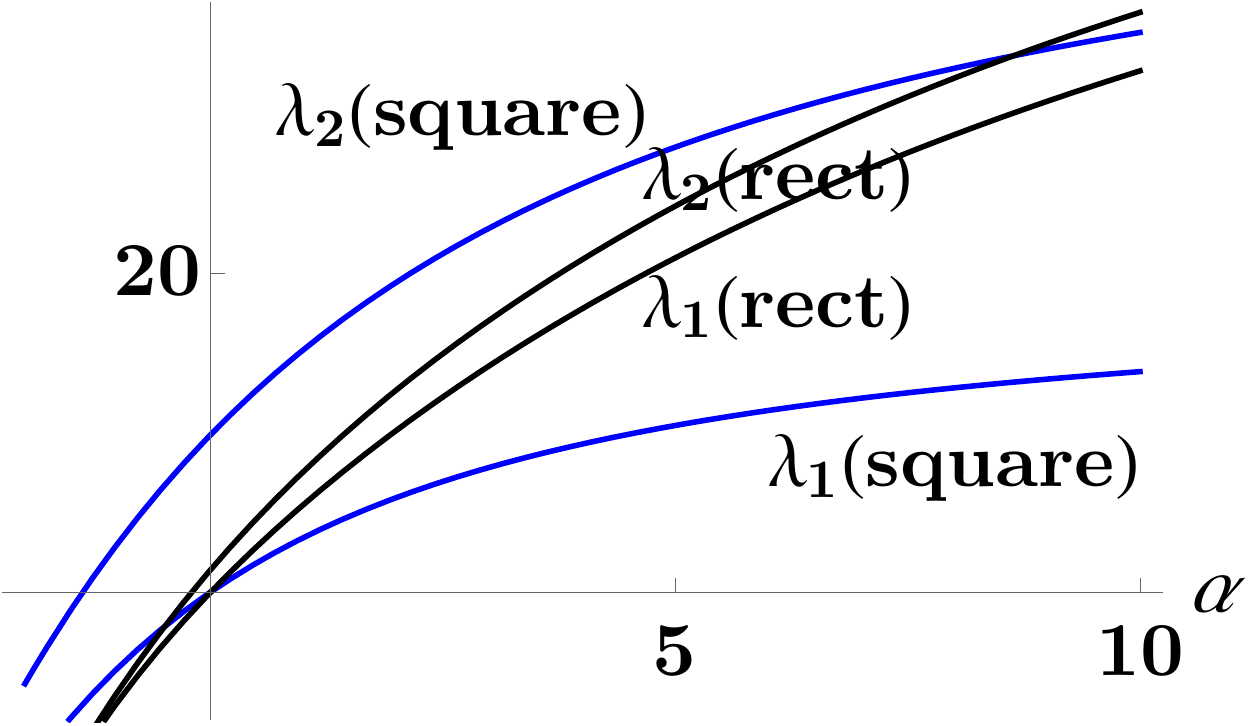}
\hspace{1.5cm}
\includegraphics[scale=0.45]{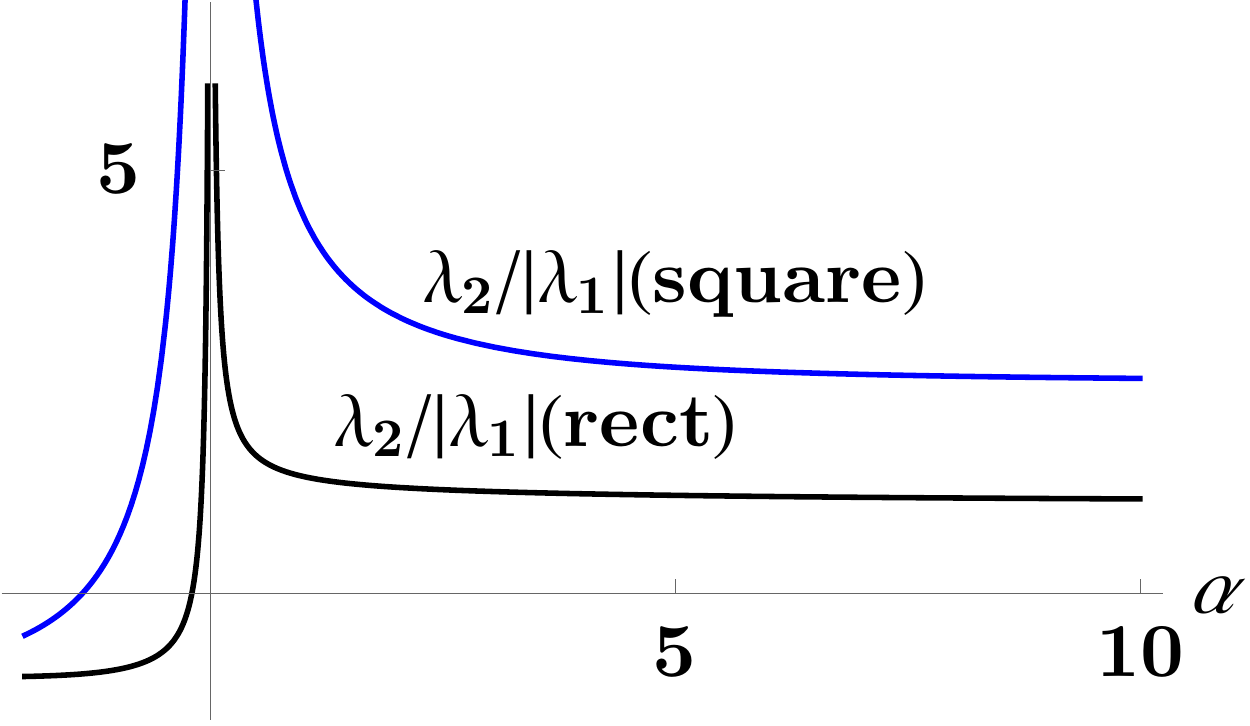}
\end{center}
\caption{\label{firsttwo} Left: the first two eigenvalues $\lambda_1$ and $\lambda_2$ for the unit square and for a rectangle with area $1$ and aspect ratio $7$, plotted as functions of the Robin parameter $\alpha$. Right: the ratio $\lambda_2/|\lambda_1|$ for the square and rectangle.}
\end{figure}
dependence on the Robin parameter while the other four involve shape optimization: 
\begin{itemize}
\item monotonicity of the spectral gap $\lambda_2-\lambda_1$ as a function of $\alpha \in \R$ (\autoref{gapmonot}; due to Smits \cite[Section 4]{Sm96} for $\alpha>0$) 
\item concavity of the first and second eigenvalues with respect to $\alpha \in \R$ (\autoref{secondeigconcave}) 
\item maximality of the cube for the first eigenvalue when $\alpha<0$, and minimality of the cube when $\alpha>0$ (\autoref{lambda1higherdim}; minimality when $\alpha>0$ is due to Freitas and Kennedy \cite[Theorem 4.1]{FK18} in $2$ dimensions and to Keady and Wiwatanapataphee \cite{KW18} in all dimensions), 
\item maximality of the cube for the second eigenvalue when $\alpha \leq 0$ (\autoref{lambda2higherdim}); when $\alpha>0$ this maximality can fail, as seen on the left of \autoref{firsttwo},
\item maximality of the cube for the magnitude of the $\lambda_2$-horizontal intercept, that is, for the first nonzero Steklov eigenvalue (\autoref{sigma1higherdim}; this was proved in a stronger form with a different approach by Girouard \emph{et al.}\ \cite{GLPS17}) 
\item maximality of the cube for the spectral ratio $\lambda_2/|\lambda_1|$ (\autoref{ratio}).
\end{itemize}

Now let us place these rectangular results in context with conjectures and results for general domains. The first result above proves a special case of Smits' monotonicity conjecture for the spectral gap on arbitrary convex domains \cite[Section 4]{Sm96}; see \autoref{NeumannDirichletGapConj}. The second result suggests concavity of the second eigenvalue (\autoref{lambda2concavity}) when the Robin parameter is positive and the domain is convex. The third result is of Bareket/Rayleigh type. When $\alpha>0$ it is the rectangular analogue of the Bossel--Daners theorem for general domains. The fourth result, about maximizing the second eigenvalue, is the rectangular version of Freitas and Laugesen's result \cite{FL18a} for general domains with $\alpha \in [-(1+1/n)R^{-1},0]$, where $R$ is the radius of the ball having the same volume as the domain. That $\alpha$-range for general domains is not thought to be optimal. The fifth result is of Brock-type for the Steklov eigenvalue. The sixth one, about maximality of the spectral ratio, motivates \autoref{ratioconj} later in the paper for general domains. 

Further, the spectral gap of a rectangular box is shown in \autoref{gapD} to be minimal for the degenerate rectangle of the same diameter, for each $\alpha \in \R$, which is consistent with \autoref{robingap} later for arbitrary convex domains when $\alpha>0$. 

\smallskip
The most difficult results in the paper arise when the Robin parameter is scaled by the perimeter $L$ of a planar domain, that is, when the Robin parameter is $\alpha/L$. For rectangles with given area, \autoref{firsttwoperim} and its close-up in \autoref{firsttwoperimcloseup} illustrate: 
\begin{figure}
\begin{center}
\includegraphics[scale=0.45]{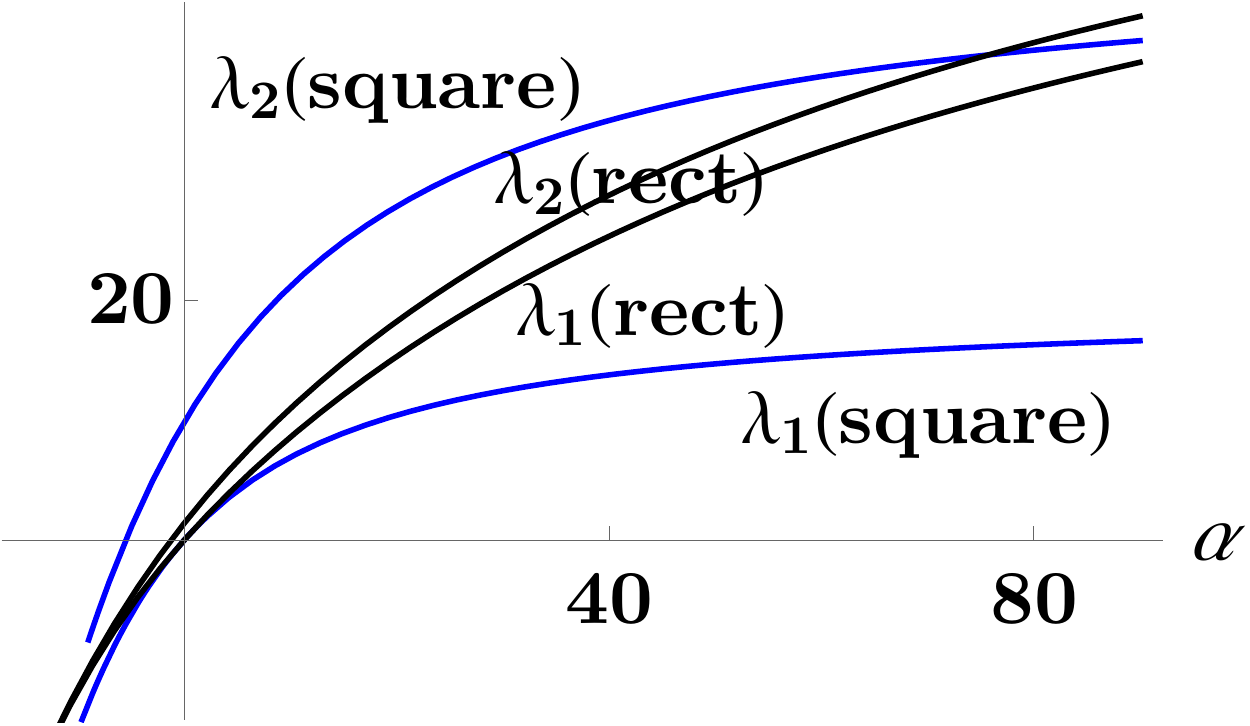}
\hspace{1.5cm}
\includegraphics[scale=0.45]{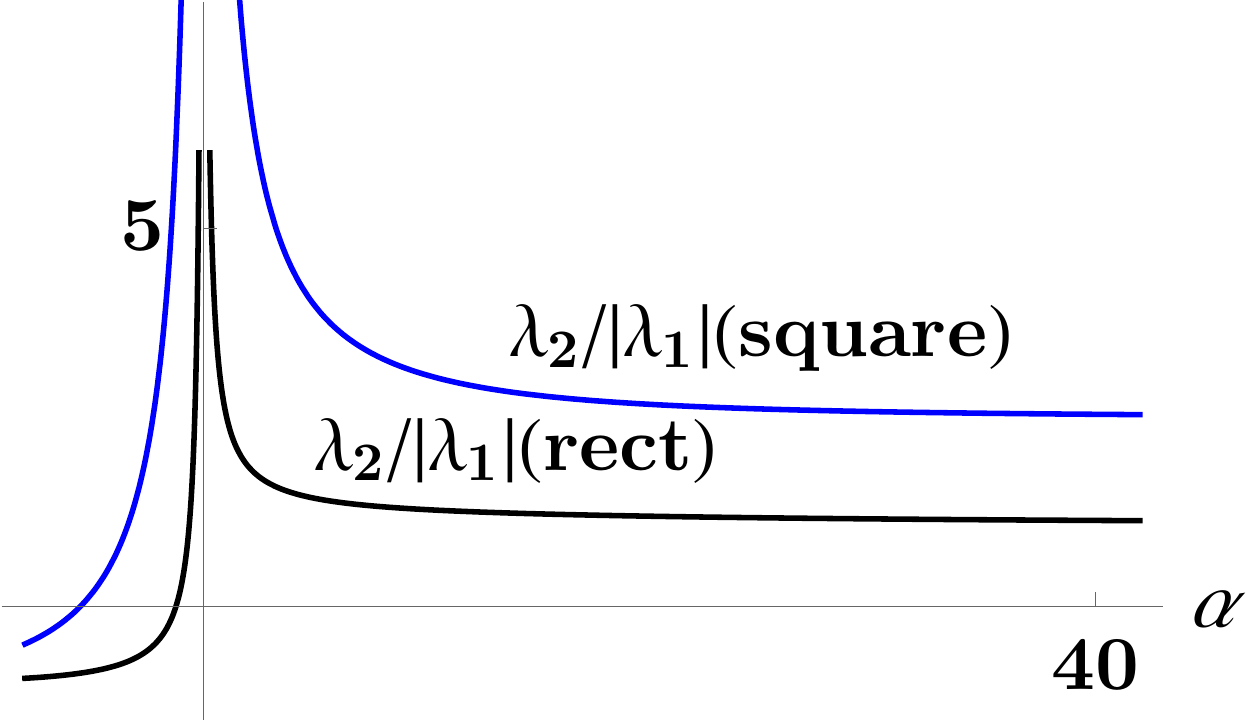}
\end{center}
\caption{\label{firsttwoperim} Length scaling $\alpha/L$. Left: the first two eigenvalues $\lambda_1(\cdot\,;\alpha/L)$ and $\lambda_2(\cdot\,;\alpha/L)$ for the unit square and for a rectangle with area $1$ and aspect ratio $7$, plotted as functions of $\alpha$. Here the perimeter is $L=4$ for the unit square and $L=2(\sqrt{7}+1/\sqrt{7})$ for the rectangle. \autoref{firsttwoperimcloseup} provides a close-up view near the origin. Right: the ratio $\lambda_2(\cdot\,;\alpha/L)/|\lambda_1(\cdot\,;\alpha/L)|$ for the square and rectangle.}
\end{figure}
\begin{figure}
\begin{center}
\includegraphics[scale=0.5]{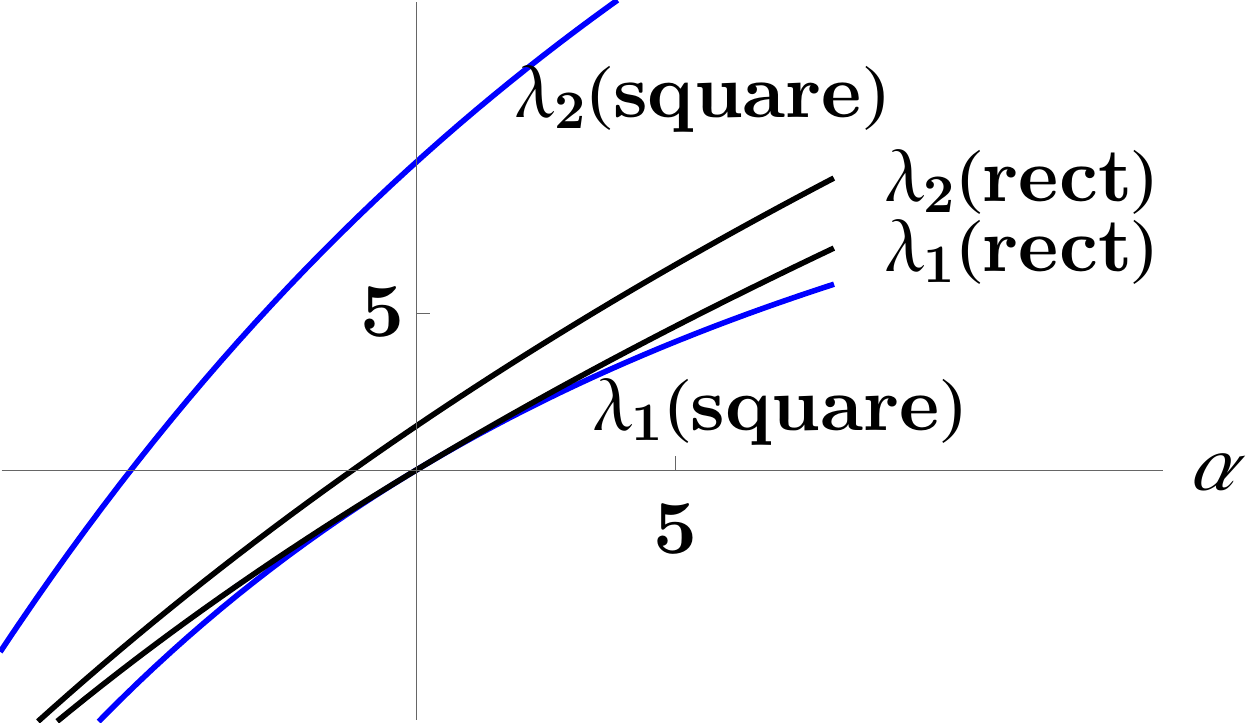}
\end{center}
\caption{\label{firsttwoperimcloseup} Length scaling $\alpha/L$. Close-up view near the origin of the left side of \autoref{firsttwoperim}, showing the first two eigenvalues $\lambda_1(\cdot\,;\alpha/L)$ and $\lambda_2(\cdot\,;\alpha/L)$ for the unit square and for a rectangle of area $1$. The first eigenvalue is minimal for the square, for all $\alpha$, and the second eigenvalue is maximal for the square in a range that includes the magnified region.}
\end{figure}
\begin{itemize}
\item minimality of the square for the first eigenvalue when $\alpha \in \R$ (\autoref{lambda1L}) 
\item maximality of the square for the second eigenvalue when $\alpha \in [\alpha_-,\alpha_+]$ (\autoref{lambda2L}), where $\alpha_- \simeq -9.4$ and $\alpha_+ \simeq 33.2$; outside that range the maximizer is the degenerate rectangle, 
\item maximality of the square for the first nonzero Steklov eigenvalue (\autoref{steklov}, which gives a new proof of a result by Girouard, Lagac\'{e}, Polterovich and Savo \cite{GLPS17})
\item maximality of the square for the spectral ratio $\lambda_2/\lambda_1$ (\autoref{ratio2dim}) when $\alpha>0$.
\end{itemize}
The first of these results, about minimality of the first Robin eigenvalue when the parameter is $\alpha/L$, suggests a new Rayleigh-type inequality, \autoref{lambda1Lgeneral}, in which the disk is the minimizer for the first eigenvalue among convex domains. This conjectured inequality applies for all $\alpha \in \R$, and notably does not switch direction at $\alpha=0$. The Bareket switching phenomenon seems not to occur, due to the scaling of the Robin parameter by perimeter. The second result, about maximizing the second eigenvalue, is the rectangular version of Freitas and Laugesen's \cite{FL18b} result for simply connected domains with $\alpha \in [-2\pi,2\pi]$. \autoref{convexconj} describes a higher dimensional generalization for the second eigenvalue on convex domains. The Steklov result (the third one above) is a rectangular Weinstock type inequality. The fourth result, maximality of the square for the spectral ratio, stimulates a conjecture for all convex domains when $\alpha \geq -2\pi$, in \autoref{ratioconjlength}. 

Lastly, the inverse spectral problem for Robin rectangles has an appealingly simple statement (\autoref{hearingdrum}): each rectangle is determined up to congruence by its first two Robin eigenvalues, whenever $\alpha \neq 0$. 

For background on spectral optimization for the Laplacian I recommend the survey by Grebenkov and Nguyen \cite{GN13}, and the book \cite{H17} edited by Henrot, which includes a chapter of Robin results. Upper and lower bounds on the first eigenvalue in terms of inradius have been developed by Kova\v{r}\'{\i}k \cite[Theorem 4.5]{Ko14}. His lower bound was recently sharpened by Savo \cite[Corollary 3]{S19}. For rectangular domains, the latest developments include an analysis of Courant-sharp Robin eigenvalues on the square by Gittins and Helffer \cite{GH19}, and of P\'{o}lya-type inequalities for disjoint unions of rectangles by Freitas and Kennedy \cite{FK18}. 

On a wry historical note, Robin's connection to the Robin boundary condition appears rather tenuous, according to investigations by Gustafson and Abe \cite{GA98}. 

\smallskip
The main results and conjectures are in the next two sections. Proofs appear later in the paper, especially in \autoref{mainproofs}. 

\section{\bf Monotonicity and concavity as a function of the Robin parameter}
\label{results}

We start by investigating the first two eigenvalues, and their gap and ratio, on a fixed domain as functions of the Robin parameter $\alpha$. Write $\Omega$ for a bounded Lipschitz domain in $\Rn$. The eigenvalues of the Robin Laplacian, denoted $\lambda_k(\Omega;\alpha)$ for $k=1,2,\dots$, are increasing and continuous as functions of the boundary parameter $\alpha \in \R$, and for each fixed $\alpha$ satisfy
\[
\lambda_1(\Omega;\alpha) < \lambda_2(\Omega;\alpha) \leq \lambda_3(\Omega;\alpha) \leq \dots \to \infty . 
\]
These facts can be established using the Rayleigh quotient and its associated minimax variational characterization of the $k$th eigenvalue, which reads
\begin{equation} \label{minimax}
\lambda_k(\Omega;\alpha) = \min_{U_k} \max_{u \in U_k \setminus 0} \frac{\int_\Omega |\nabla u|^2 \, dx + \alpha \int_{\partial \Omega} u^2 \, dS}{\int_\Omega u^2 \, dx} 
\end{equation}
where $U_k$ ranges over all $k$-dimensional subspaces of $H^1(\Omega)$; see for example \cite[{\S}4.2]{H17}.

\section*{Monotonicity} Each individual eigenvalue $\lambda_k(\Omega;\alpha)$ is increasing as a function of $\alpha$, by the minimax characterization \eqref{minimax}. Is the gap between the first two eigenvalues also increasing with respect to $\alpha$? On this question, Smits \cite{Sm96} has raised:
\begin{conjx}[Monotonicity of the spectral gap with respect to the Robin parameter;  \protect{\cite[Section 4]{Sm96}}] \label{NeumannDirichletGapConj}
For convex bounded domains $\Omega$, the spectral gap $(\lambda_2-\lambda_1)(\Omega;\alpha)$ is strictly increasing as a function of $\alpha > 0$. In particular, the Neumann gap provides a lower bound on the Dirichlet gap:
\[
\nu_2(\Omega) < \delta_2(\Omega) - \delta_1(\Omega) 
\]
where $0 = \nu_1 < \nu_2$ are the first and second Neumann eigenvalues of the Laplacian, and $0<\delta_1 < \delta_2$ are the first and second Dirichlet eigenvalues. 
\end{conjx}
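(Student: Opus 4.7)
The natural starting point is Hadamard-type differentiation in $\alpha$. Let $u_k = u_k(\cdot\,;\alpha)$ denote the $L^2(\Omega)$-normalized $k$th Robin eigenfunction on $\Omega$. Differentiating the Rayleigh quotient \eqref{minimax} gives the Feynman--Hellmann formula
\[
\frac{d\lambda_k}{d\alpha}(\Omega;\alpha) = \int_{\partial\Omega} u_k^2\,dS,
\]
valid at each $\alpha$ where $\lambda_k$ is simple. Consequently the conjectured strict monotonicity of $(\lambda_2-\lambda_1)(\Omega;\alpha)$ is equivalent to the pointwise boundary trace inequality
\[
\int_{\partial\Omega} u_2(\cdot\,;\alpha)^2\,dS > \int_{\partial\Omega} u_1(\cdot\,;\alpha)^2\,dS
\]
for every $\alpha>0$. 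A first step is therefore to handle the branch points where $\lambda_2(\Omega;\alpha)=\lambda_3(\Omega;\alpha)$; by analyticity of the resolvent in $\alpha$ these are isolated, so one may work along analytic eigenvalue branches and piece together monotonicity using one-sided derivatives at crossings.

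The plan is then to prove this trace inequality. Heuristically, the ground state $u_1>0$ is driven into the interior by the penalty term $\alpha\int_{\partial\Omega} u^2\,dS$, whereas $u_2$ must change sign and, for convex $\Omega$, has a nodal set meeting $\partial\Omega$ transversally---so $u_2$ can afford to retain more mass on $\partial\Omega$ than $u_1$ does. Using the Robin condition, the claim rewrites as
\[
\int_{\partial\Omega}(\partial_\nu u_2)^2\,dS > \int_{\partial\Omega}(\partial_\nu u_1)^2\,dS,
\]
a comparison of normal-derivative traces. Two natural routes are: (i) a Rellich--Pohozaev identity, combined with convexity of $\partial\Omega$ to extract a useful boundary sign, along the lines of gradient-estimate techniques developed for the Dirichlet gap; or (ii) a coupling or modulus-of-continuity argument of Andrews--Clutterbuck type, adapted to Robin conditions by tracking the boundary contribution through the maximum principle.

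The main obstacle is precisely this trace comparison. In the limit $\alpha \to \infty$ it formally degenerates to Payne's conjecture $\nu_2(\Omega)<\delta_2(\Omega)-\delta_1(\Omega)$, itself unresolved for general convex domains, so no purely formal reduction can succeed. In the limit $\alpha\to 0^+$ the inequality becomes $\int_{\partial\Omega}(\phi_2^2 - 1/|\Omega|)\,dS>0$, where $\phi_2$ is the second Neumann eigenfunction---a statement closely allied to the ``hot spots'' phenomenon that $\phi_2$ attains its extrema on $\partial\Omega$. As a concrete first attack I would attempt the inequality on convex domains with a line of symmetry, where the antisymmetric structure of $u_2$ is available and reduces the problem to a mixed Dirichlet/Robin eigenvalue comparison on a half-domain. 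Rectangles, already settled in \autoref{gapmonot} by separation of variables, serve as a model for how the trace inequality can be checked explicitly once the eigenfunctions are accessible, and suggest that a weighted one-dimensional reduction along ``typical'' directions of $\Omega$ might be the right quantitative tool in the convex case.
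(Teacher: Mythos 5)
The statement you are addressing is \autoref{NeumannDirichletGapConj}, which the paper records as an \emph{open conjecture} of Smits: no proof for general convex domains is given there, only the special case of rectangular boxes (\autoref{gapmonot}), obtained by reducing to the longest edge via \autoref{boxgap} and then comparing explicit transcendental eigenvalue formulas through \autoref{derivcomparison} and \autoref{derivcomparisonneg}. Your proposal, as you acknowledge, is not a proof either. What it does contribute is a correct and useful reformulation that the paper does not state: by the Hadamard/Feynman--Hellmann derivative $\lambda_k'(\alpha)=\int_{\partial\Omega}u_k^2\,dS$ (valid along analytic eigenvalue branches, with one-sided derivatives at degeneracies), strict monotonicity of the gap for $\alpha>0$ is equivalent to the boundary trace inequality $\int_{\partial\Omega}u_2^2\,dS>\int_{\partial\Omega}u_1^2\,dS$, and your identification of the two limiting regimes ($\alpha\to0^+$ giving $\int_{\partial\Omega}\phi_2^2\,dS>S(\Omega)/|\Omega|$ for the second Neumann eigenfunction, $\alpha\to\infty$ recovering the Dirichlet--Neumann gap comparison) is accurate. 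One technical caveat: crossings of $\lambda_2$ and $\lambda_3$ need not be isolated --- for the disk the second eigenvalue is doubly degenerate for every $\alpha$ --- but the branchwise derivative formula still applies, so this is repairable.

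The genuine gap is that the central step, the trace comparison $\int_{\partial\Omega}u_2^2\,dS>\int_{\partial\Omega}u_1^2\,dS$, is left entirely unproved, and neither of your proposed routes (a Rellich--Pohozaev identity exploiting convexity, or an Andrews--Clutterbuck coupling adapted to Robin conditions) is carried out even in a model case. The heuristic that $u_2$ ``can afford to retain more boundary mass'' because it changes sign is not an argument, and the paper's observation (via Khalile's asymptotics) that the gap of certain convex polygons \emph{decreases} for $\alpha\ll0$ shows the trace inequality genuinely fails for some convex domains at negative $\alpha$; any proof must therefore use $\alpha>0$ in an essential quantitative way, not merely convexity. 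Your suggested first target --- convex domains with a hyperplane of symmetry across which $u_2$ is odd, reducing the problem to a mixed Dirichlet--Robin versus Robin ground-state comparison on a half-domain --- is well chosen and is precisely the structure that makes the paper's rectangular case (\autoref{gapmonot}) tractable, but even that case is not settled by what you have written.
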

Smits proved his conjecture for intervals, and observed that he had verified it also in $2$ dimensions for rectangles and disks. In the next theorem we provide a proof in all dimensions for rectangular boxes, handling both positive and negative values of $\alpha$. 

\autoref{NeumannDirichletGapConj} fails on some convex domains when $\alpha \ll 0$, since an asymptotic formula due to Khalile \cite[Corollary 1.3]{Kh18} implies that certain convex polygons with distinct smallest angles have spectral gap behaving like $(\text{const.})\alpha^2$ for large negative $\alpha$. The gap for such a polygon would be decreasing as a function of $\alpha$ when $\alpha \ll 0$. 

Rectangular boxes are better behaved, it turns out, for all real $\alpha$. 
\begin{theorem}[Monotonicity of the spectral gap with respect to the Robin parameter, on rectangular boxes] \label{gapmonot}
For a rectangular box $\mathcal{B}$, as $\alpha$ increases from $-\infty$ to $\infty$ the spectral gap $(\lambda_2-\lambda_1)(\mathcal{B};\alpha)$ strictly increases from $0$ to the Dirichlet gap $(\delta_2-\delta_1)(\mathcal{B})$. 
\end{theorem}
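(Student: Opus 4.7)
The plan is to separate variables, reducing the problem to the one-dimensional Robin gap on each edge, and then to prove strict monotonicity of that 1D gap via the Feynman--Hellmann formula. For $\mathcal{B} = \prod_{i=1}^n (0, \ell_i)$, separation of variables writes each eigenvalue of $\mathcal{B}$ as a sum $\sum_i \mu_{k_i}(\ell_i;\alpha)$ of 1D Robin eigenvalues, where $\mu_j(\ell;\alpha)$ denotes the $j$th eigenvalue of the interval $(0,\ell)$. Hence $\lambda_1(\mathcal{B};\alpha) = \sum_i \mu_1(\ell_i;\alpha)$, and $\lambda_2$ upgrades exactly one index from $1$ to $2$, so
\[
(\lambda_2 - \lambda_1)(\mathcal{B}; \alpha) = \min_{1 \le i \le n}\bigl[ \mu_2(\ell_i;\alpha) - \mu_1(\ell_i;\alpha) \bigr].
\]
A minimum of strictly increasing continuous functions is strictly increasing, so the theorem reduces to showing that the 1D gap $g(\ell;\alpha) := \mu_2(\ell;\alpha) - \mu_1(\ell;\alpha)$ is strictly increasing on $\R$, with $g(\ell;\alpha) \to 0$ as $\alpha \to -\infty$ and $g(\ell;\alpha) \to 3\pi^2/\ell^2$ as $\alpha \to +\infty$; taking the min over $i$ of the latter recovers the Dirichlet gap $3\pi^2/\ell_{\max}^2$ of $\mathcal{B}$.

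Placing the 1D interval symmetrically as $(-\ell/2,\ell/2)$, reflection symmetry forces $u_1$ to be even and $u_2$ to be odd in every $\alpha$-regime; the only subtle case is $\alpha\le -2/\ell$, where both eigenfunctions are negative, and there the ordering $\mu_{\text{even}} < \mu_{\text{odd}}$ follows from $\tanh<\coth$ applied to the defining equations $k_{\text{even}}\tanh(k_{\text{even}}\ell/2) = k_{\text{odd}}\coth(k_{\text{odd}}\ell/2) = -\alpha$. With $L^2$-normalised eigenfunctions on the full interval, Feynman--Hellmann gives
\[
g'(\ell;\alpha) = 2\bigl[\,|u_2(\ell/2)|^2 - |u_1(\ell/2)|^2\bigr],
\]
and one must show this is positive for every $\alpha\in\R$. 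In the positive regime, taking $u_1 = \cos(k_1 x)$ and $u_2 = \sin(k_2 x)$ with $k_1\tan(k_1\ell/2)=\alpha=-k_2\cot(k_2\ell/2)$, and evaluating the $L^2$-norms, a short manipulation shows that $g'(\ell;\alpha)>0$ follows from the trigonometric inequality $k_2\ell/2 - k_1\ell/2 < \pi/2$. The latter is proved by setting $\psi = k_2\ell/2 - \pi/2 \in (0,\pi/2)$ and noticing that the two transcendental equations combine into $(k_1\ell/2)\tan(k_1\ell/2) = (\psi + \pi/2)\tan\psi$; strict monotonicity of $x\mapsto x\tan x$ on $(0,\pi/2)$ then forces $k_1\ell/2 > \psi$, as required. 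The mixed regime $-2/\ell<\alpha<0$ (with $u_1$ hyperbolic and $u_2$ sinusoidal) and the fully hyperbolic regime $\alpha\le -2/\ell$ succumb to analogous but more delicate computations using hyperbolic identities in place of the trigonometric ones.

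For the limits, as $\alpha\to+\infty$ the Robin condition becomes Dirichlet, so $\mu_k(\ell;\alpha)\to(k\pi/\ell)^2$ and $g(\ell;\alpha)\to 3\pi^2/\ell^2$. As $\alpha\to-\infty$, the equations $k_1\tanh(k_1\ell/2)=k_2\coth(k_2\ell/2)=-\alpha$ force $k_1,k_2 = -\alpha + O(|\alpha|e^{\alpha\ell})$, so $\mu_2-\mu_1 = (k_1-k_2)(k_1+k_2)\to 0$ exponentially. The hard part is the monotonicity step in the negative-$\alpha$ regimes: while the positive case enjoys a clean reduction to the monotonicity of $x\tan x$, the hyperbolic analogues require extra case-work, particularly across the transition $\alpha = -2/\ell$ where the second eigenvalue passes through zero and the eigenfunction's functional form changes.
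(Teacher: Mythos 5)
Your reduction to one dimension is sound and in fact slightly slicker than the paper's: writing $(\lambda_2-\lambda_1)(\mathcal{B};\alpha)=\min_i\big[\mu_2(\ell_i;\alpha)-\mu_1(\ell_i;\alpha)\big]$ and using that a minimum of finitely many strictly increasing functions is strictly increasing avoids having to identify the longest edge as the minimizer (the paper instead proves, via \autoref{secondeigenbox} and \autoref{boxgap}, that the gap of the box \emph{equals} the gap of its longest edge). Your route to the 1D monotonicity is also genuinely different: the paper never invokes Feynman--Hellmann, but instead writes the gap explicitly through the inverse functions $g_1^{-1},g_2^{-1},h_1^{-1},h_2^{-1}$ and proves three derivative-comparison inequalities (\autoref{derivcomparison} and \autoref{derivcomparisonneg}), one for each of the regimes $\alpha>0$, $-2/\ell<\alpha<0$, $\alpha<-2/\ell$. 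Of course the two derivatives being computed are the same object, so the approaches are cousins; yours trades explicit inverse-function calculus for boundary values of normalized eigenfunctions.

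The genuine gap is that you only execute one of the three regimes. The sentence asserting that the mixed and fully hyperbolic regimes ``succumb to analogous but more delicate computations'' is precisely where the paper has to do real work: in the regime $-2/\ell<\alpha<0$ the gap is a sum $g_2^{-1}(y)^2+h_1^{-1}(-y)^2$ (not a difference), and in the regime $\alpha<-2/\ell$ one needs both the ordering $h_2^{-1}(y)<h_1^{-1}(y)$ and a sign analysis that ultimately rests on $y-y^2<0$ for $y>1$; neither step is a mechanical transcription of the trigonometric case, and you have not supplied them. A secondary concern is the positive regime itself: you claim $g'>0$ ``follows from'' $k_2\ell/2-k_1\ell/2<\pi/2$ via an unshown manipulation. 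The inequality $k_2\ell/2-k_1\ell/2<\pi/2$ is true and your proof of it is correct, but if one actually carries out the manipulation of $|u_2(\ell/2)|^2-|u_1(\ell/2)|^2$ using the Robin relations and $\sec^2=1+\tan^2$, $\csc^2=1+\cot^2$, the positivity reduces to the much weaker statement $k_1<k_2$ (which is automatic since $k_1\ell/2<\pi/2<k_2\ell/2$); so either your intermediate inequality is a detour or the claimed implication needs to be exhibited. To complete the proof you must carry out the two hyperbolic regimes explicitly, or cite an argument at the level of detail of \autoref{derivcomparisonneg}.
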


The spectral ratio $\lambda_2/\lambda_1$ satisfies a monotonicity property for all Lipschitz domains,  provided we multiply the ratio by $\alpha$. Write $\sigma_1(\Omega)$ for the first nonzero Steklov eigenvalue of the domain. 
\begin{theorem}[Monotonicity of $\alpha$ times the spectral ratio] \label{ratiomonot}
For each bounded Lipschitz domain $\Omega$, the map 
\[
\alpha \mapsto \alpha \frac{\lambda_2(\Omega;\alpha)}{\lambda_1(\Omega;\alpha)}
\]
is increasing for $\alpha > -\sigma_1(\Omega)$, that is, whenever $\lambda_2(\Omega;\alpha)>0$. 
\end{theorem}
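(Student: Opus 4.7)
My plan is to differentiate $f(\alpha) := \alpha \lambda_2(\Omega;\alpha)/\lambda_1(\Omega;\alpha)$ directly and verify its derivative is strictly positive on $(-\sigma_1(\Omega), \infty)$, handling the apparent singularity at $\alpha = 0$ by a continuity argument. Write $u_k$ for an $L^2(\Omega)$-normalized eigenfunction associated with a simple $\lambda_k(\Omega;\alpha)$, and set $b_k := \int_{\partial\Omega} u_k^2 \, dS$, $d_k := \int_\Omega |\nabla u_k|^2 \, dx$. I will rely on exactly two identities: the Rayleigh quotient equality $\lambda_k = d_k + \alpha b_k$, and Hadamard's first-variation formula $\lambda_k'(\alpha) = b_k$.

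A short algebraic computation, substituting $\lambda_k' = b_k$ into $\lambda_1^2 f' = \lambda_1 \lambda_2 + \alpha(\lambda_1 \lambda_2' - \lambda_2 \lambda_1')$ and eliminating $b_1, d_2$ via the Rayleigh identity, reorganizes the derivative into the pleasant form
\[
\lambda_1^2 \, f'(\alpha) \;=\; \lambda_2 \, d_1 \;+\; \alpha \lambda_1 \, b_2 .
\]
Both summands are non-negative throughout $\alpha > -\sigma_1$: when $\alpha > 0$, both $\lambda_1$ and $\lambda_2$ are positive; when $-\sigma_1 < \alpha < 0$, the assumption gives $\lambda_2 > 0$, while $\lambda_1 < 0$ makes $\alpha \lambda_1 > 0$. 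So I only need to exclude simultaneous vanishing of the two summands.

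Strict positivity follows from two short observations. First, $b_2 > 0$ by unique continuation: if $u_2$ vanished on $\partial\Omega$, the Robin boundary condition (using $\alpha \neq 0$) would force $\partial_\nu u_2 \equiv 0$ as well, giving zero Cauchy data and contradicting $u_2 \not\equiv 0$. Second, $d_1 > 0$ whenever $\alpha \neq 0$, since a constant first eigenfunction forces both $\lambda_1 = 0$ and $\alpha = 0$ from the PDE and the boundary condition, respectively. Hence for $\alpha > 0$ the second summand is strictly positive, while for $-\sigma_1 < \alpha < 0$ the first one is, so $f'(\alpha) > 0$ throughout $(-\sigma_1, \infty) \setminus \{0\}$.

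Finally I would close two loose ends. At the origin $f$ has only a removable singularity: since $\lambda_1(\alpha) \sim \alpha |\partial\Omega|/|\Omega|$ as $\alpha \to 0$, the ratio $\alpha/\lambda_1$ extends continuously and $f$ approaches $\nu_2(\Omega)|\Omega|/|\partial\Omega|$ from either side. Combined with strict monotonicity on each of $(-\sigma_1, 0)$ and $(0,\infty)$, this yields strict increase on the entire interval. The main obstacle I anticipate is accommodating possible multiplicity of $\lambda_2$ at isolated parameter values, where the pointwise Hadamard formula is unavailable; the standard remedy is to work with the Kato one-sided derivatives of $\lambda_2$ together with the fact that $\alpha \mapsto \lambda_2(\Omega;\alpha)$ is Lipschitz (hence absolutely continuous on compact subintervals avoiding $0$), so that $f' \geq 0$ almost everywhere with strict inequality densely still forces strict monotonicity.
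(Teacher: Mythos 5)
Your proposal is correct, but it takes a genuinely different route from the paper. The identity $\lambda_1^2 f'(\alpha) = \lambda_2 d_1 + \alpha\lambda_1 b_2$ does follow from $\lambda_k' = b_k$ and $\lambda_k = d_k + \alpha b_k$ (substituting $\alpha b_1 = \lambda_1 - d_1$), and the sign analysis on $(-\sigma_1,0)$ and $(0,\infty)$ is right. The paper, by contrast, never differentiates: it uses only that $\lambda_1(\Omega;\cdot)$ is concave (being a minimum of functions affine in $\alpha$) and vanishes at $\alpha=0$, so the difference quotient $\lambda_1(\Omega;\alpha)/\alpha$ is positive and \emph{decreasing} in $\alpha$; writing $\alpha\lambda_2/\lambda_1 = \lambda_2 \big/ \bigl(\lambda_1/\alpha\bigr)$ as an increasing positive numerator over a decreasing positive denominator finishes the proof in two lines, with no eigenvalue derivatives, no simplicity assumptions, no boundary regularity beyond Lipschitz, and no separate patch at $\alpha=0$. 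What your computation buys in exchange is an explicit formula for $f'$ and a clean strictness statement. The price is the loose end you yourself flag: Hadamard's formula for $\lambda_2$ at possible crossings. That is repairable --- the quadratic form is affine in $\alpha$, hence a holomorphic family of type (B), so Kato's theory gives analytic branches and $\lambda_2$ is locally Lipschitz with the claimed derivative off a discrete set --- but it is genuine machinery that the paper's argument simply never needs. One simplification you should make: the unique-continuation argument for $b_2>0$ is superfluous, since $\lambda_2 d_1 > 0$ alone gives strict positivity in both sign regimes ($\lambda_2>0$ by hypothesis and $d_1>0$ whenever $\alpha\neq 0$); dropping it also spares you from citing unique continuation from Cauchy data on a merely Lipschitz boundary, which is itself a delicate point.
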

The proof of \autoref{ratiomonot} breaks down when $\alpha < -\sigma_1(\Omega)$ (that is, when $\lambda_2$ is negative), although it seems reasonable to conjecture that $\alpha \lambda_2/\lambda_1$ is increasing for that range of $\alpha$-values also. 

The spectral ratio without a factor of $\alpha$ seems to decrease rather than increase. 
\begin{conjx}[Monotonicity of the spectral ratio] \label{gapmonotpure}
For every bounded Lipschitz domain $\Omega$, the map
\[
\alpha \mapsto \frac{\lambda_2(\Omega;\alpha)}{\lambda_1(\Omega;\alpha)}
\]
is decreasing for $\alpha>0$.  
\end{conjx}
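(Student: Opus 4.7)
The plan is to convert the monotonicity claim into a comparison of Steklov-type Rayleigh quotients for the first two Robin eigenfunctions, via the Hadamard variation formula.

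Fix $\alpha>0$ and let $u_k=u_k(\cdot\,;\alpha)$ be the $L^2(\Omega)$-normalized Robin eigenfunctions. Write $A_k=\int_\Omega|\nabla u_k|^2\,dx$ and $B_k=\int_{\partial\Omega} u_k^2\,dS$, so that $\lambda_k=A_k+\alpha B_k$ and, by the Hadamard formula, $\lambda_k'(\alpha)=B_k$. Then
\[
\frac{d}{d\alpha}\frac{\lambda_2}{\lambda_1}=\frac{B_2\lambda_1-B_1\lambda_2}{\lambda_1^2},
\]
and the conjecture is equivalent to $B_2\lambda_1<B_1\lambda_2$; substituting $\lambda_k=A_k+\alpha B_k$ and simplifying, it becomes the strict Steklov-type comparison
\[
\frac{A_1}{B_1}<\frac{A_2}{B_2}.
\]
In words, the Steklov Rayleigh quotient $\int_\Omega|\nabla u|^2\,dx/\int_{\partial\Omega} u^2\,dS$ evaluated on the second Robin eigenfunction should strictly exceed the corresponding quotient for the first.

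The heuristic is plausible since $u_1>0$ while $u_2$ changes sign, so $u_2$ ought to carry proportionally more Dirichlet energy relative to its boundary trace. To turn this heuristic into a proof I would try, in order: first, a direct min-max argument using the Steklov variational characterization applied to the two-dimensional subspace $\mathrm{span}(u_1,u_2)$, or to $\mathrm{span}(1,u_2)$ which contains the first Steklov eigenfunction; second, exploiting the identity $\int_\Omega\nabla u_1\cdot\nabla u_2\,dx=-\alpha\int_{\partial\Omega}u_1u_2\,dS$, which follows from the PDE together with the $L^2(\Omega)$-orthogonality $\int_\Omega u_1u_2\,dx=0$, by inserting test functions $v=su_1+tu_2$ into the Steklov quotient to derive a quadratic inequality in $(s,t)$ relating $A_i, B_i$; and third, checking endpoint consistency. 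As $\alpha\to 0^+$ one has $A_1/B_1\to 0$ while $A_2/B_2\to\int_\Omega|\nabla\phi_2|^2\,dx/\int_{\partial\Omega}\phi_2^2\,dS>0$, where $\phi_2$ is the second Neumann eigenfunction. As $\alpha\to\infty$ one obtains $A_k/B_k\sim\alpha^2\delta_k/c_k$ with $c_k=\int_{\partial\Omega}(\partial_\nu u_k^D)^2\,dS$ and $u_k^D$ the $L^2$-normalized Dirichlet eigenfunction, so the leading-order comparison amounts to $\delta_1/c_1\leq\delta_2/c_2$; on the ball the Pohozaev identity yields $\delta_k/c_k=R/2$ for all $k$, so this case requires a next-order analysis.

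The main obstacle will be proving the reduced inequality $A_1/B_1<A_2/B_2$ for all bounded Lipschitz domains. The Robin eigenfunctions are not Steklov eigenfunctions, and their $L^2(\Omega)$-orthogonality does not translate into a useful constraint on the Steklov Rayleigh quotient, so a naive min-max approach yields at best an upper bound on $\sigma_1$ involving $\max(A_1/B_1,A_2/B_2)$ rather than the needed ordering. A successful argument likely requires a tailored combination of the Robin and Steklov min-max principles, or perhaps a direct differential inequality along the family $\{u_k(\cdot\,;\alpha)\}_\alpha$ that exploits concavity of $\lambda_k$ in $\alpha$ together with the endpoint asymptotics; concavity alone, however, seems too weak to force the ratio to be monotone.
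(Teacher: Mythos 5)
The statement you are trying to prove is Conjecture~\ref{gapmonotpure}, which the paper presents as an \emph{open} conjecture: there is no proof in the paper to compare against, and the author explicitly remarks that the conjecture ``is open even for rectangles'' and apparently fails for some acute isosceles triangles when $\alpha<0$. Your proposal is therefore not in competition with an existing argument, but it also does not close the problem. The reduction you carry out is correct and clean: with $\lambda_k=A_k+\alpha B_k$ and the Feynman--Hellmann/Hadamard formula $\lambda_k'(\alpha)=B_k$, the quotient rule does show that strict decrease of $\lambda_2/\lambda_1$ for $\alpha>0$ is equivalent to the Steklov-type comparison $A_1/B_1<A_2/B_2$ (note $B_2>0$ by unique continuation, so the division is legitimate). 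Two caveats: first, $\lambda_2(\Omega;\alpha)$ need not be simple, so $\lambda_2'(\alpha)$ may fail to exist at crossing points; you should phrase the argument via one-sided derivatives or analytic branches, which is routine but must be said. Second, and decisively, the reduced inequality $A_1/B_1<A_2/B_2$ is exactly where the difficulty of the conjecture lives, and you acknowledge you cannot prove it. Your own diagnosis of why the naive min-max fails is accurate --- the $L^2(\Omega)$-orthogonality of $u_1$ and $u_2$ gives no control on the Steklov quotient of the span, so one only gets an upper bound on $\sigma_1(\Omega)$ by $\max(A_1/B_1,A_2/B_2)$, not an ordering --- and your endpoint check at $\alpha\to\infty$ shows the comparison degenerates to an equality at leading order on the ball, so no soft asymptotic argument will finish it.

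In short: the computation converting monotonicity of the ratio into $A_1/B_1<A_2/B_2$ is a genuine and potentially useful reformulation (it isolates a concrete, $\alpha$-by-$\alpha$ inequality between the Dirichlet-energy-to-boundary-mass ratios of the first two Robin eigenfunctions), but the essential content of the conjecture remains unproved. If you want a tractable first target, I would suggest verifying $A_1/B_1<A_2/B_2$ for an interval or a rectangle using the explicit eigenfunctions from \autoref{identifyinginterval} and \autoref{identifying}; even that case is stated in the paper to be open, so a rectangular proof would already be a contribution.
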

This spectral ratio approaches $\infty$ as $\alpha \to 0$, since the first eigenvalue approaches zero. 

\autoref{gapmonotpure} is open even for rectangles, where the formula for the spectral ratio seems tricky to handle analytically. The conjecture can apparently fail for acute isosceles triangles with $\alpha<0$, by numerical work of D. Kielty (private communication). 

\section*{Concavity} Next, recall that the first Robin eigenvalue $\lambda_1(\Omega;\alpha)$ is a concave function of $\alpha \in \R$, by the Rayleigh principle 
\[
\lambda_1(\Omega;\alpha) = \min_{u \in H^1(\Omega)} \frac{\int_\Omega |\nabla u|^2 \, dx + \alpha \int_{\partial \Omega} u^2 \, dS}{\int_\Omega u^2 \, dx} ,
\]
which expresses the first eigenvalue curve as the minimum of a family of linear functions of $\alpha$. 

Is the second Robin eigenvalue also concave with respect to $\alpha$? That seems too much to expect on arbitrary domains, and even among convex domains, nonconcavity can occur for some negative $\alpha$ by numerical work of Kielty (private communication). So we raise a question for positive $\alpha$. 
\begin{conjx}[Concavity of the second eigenvalue with respect to the Robin parameter] \label{lambda2concavity}
For convex bounded domains $\Omega$, the second eigenvalue $\lambda_2(\Omega;\alpha)$ is a concave function of $\alpha>0$. 
\end{conjx}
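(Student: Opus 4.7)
The natural first move is second-order perturbation theory for the affine family of bilinear forms $a_\alpha(u,v) = \int_\Omega \nabla u \cdot \nabla v \, dx + \alpha \int_{\partial\Omega} uv \, dS$. At values $\alpha>0$ where $\lambda_2(\Omega;\alpha)$ is simple (which I expect holds off a discrete set), the Hellmann--Feynman formula gives $\lambda_2'(\alpha) = \int_{\partial\Omega} u_2^2 \, dS$, and Kato's second-order perturbation formula for a linear form perturbation yields
\[
\lambda_2''(\alpha) \;=\; 2 \sum_{j \neq 2} \frac{c_j^2}{\lambda_2 - \lambda_j}, \qquad c_j := \int_{\partial\Omega} u_2 \, u_j \, dS,
\]
summed over an $L^2(\Omega)$-orthonormal basis of Robin eigenfunctions. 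Only the $j=1$ term is positive, so concavity reduces to the higher-mode dominance inequality
\[
\frac{c_1^2}{\lambda_2 - \lambda_1} \;\leq\; \sum_{j \geq 3} \frac{c_j^2}{\lambda_j - \lambda_2}.
\]
Once this inequality is proved generically, concavity on all of $(0,\infty)$ follows from continuity, together with a check that the one-sided derivatives jump downwards at eigenvalue crossings (which is the correct direction for concavity and can be handled by a standard Lipschitz-selection argument).

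To attack the displayed inequality, I would recast its right-hand side variationally. Letting $P$ denote the $L^2(\Omega)$-projection onto $\mathrm{span}(u_1,u_2)^\perp$, the right-hand side equals a quadratic form $\langle (A - \lambda_2 I)^{-1} P f, f \rangle$ where $f$ is a boundary source built from $u_2 |_{\partial\Omega}$. This in turn admits a minimization characterization, so the strategy is to exhibit a single admissible trial function whose associated value already exceeds $c_1^2/(\lambda_2 - \lambda_1)$. Promising candidates are obtained by multiplying $u_2$ by a shift function adapted to the geometry of $\Omega$---a coordinate function aligned with a diameter, or the signed distance to $\partial\Omega$---chosen so that positivity of $u_1$ and convexity of $\Omega$ can be leveraged through integration by parts. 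The nodal structure of $u_2$ on convex domains (by Melas the nodal set meets the boundary) then constrains the sign pattern of $u_2|_{\partial\Omega}$ and should force cancellation in $c_1 = \int_{\partial\Omega} u_1 u_2 \, dS$, which is the term to beat.

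The hard part will be the spectral inequality itself. Neither convexity of $\Omega$ nor the sign $\alpha>0$ enters the formula manifestly, yet both must be used essentially: the text notes that Khalile's asymptotics give quadratic blow-up of the gap on certain polygons for $\alpha \ll 0$, which obstructs concavity there, so any proof must crucially exploit $\alpha>0$ (perhaps through monotonicity of a Steklov-type quotient on $\partial\Omega$), and convexity is unavoidable since nonconvex domains can exhibit nonconcavity for some negative $\alpha$ already. I suspect a purely spectral manipulation of the inequality will not suffice; the decisive step is more likely a geometric transplantation argument that builds, for each pair $\alpha_0 < \alpha_1$, a test function inside the variational characterization of $\lambda_2$ at an intermediate $\alpha$ from eigenfunctions at $\alpha_0$ and $\alpha_1$, producing the midpoint inequality $\lambda_2\!\left(\frac{\alpha_0+\alpha_1}{2}\right) \geq \frac{1}{2}(\lambda_2(\alpha_0)+\lambda_2(\alpha_1))$ directly from convex geometry rather than through the perturbative spectral estimate.
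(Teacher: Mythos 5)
This statement is \autoref{lambda2concavity}, a \emph{conjecture}: the paper offers no proof of it, and explicitly presents it as open. The only cases the author settles are (a) domains with a symmetry plane whose second eigenfunction is odd across that plane, where $\lambda_2$ equals the first eigenvalue of a mixed Dirichlet--Robin problem on half the domain and is therefore a minimum of affine functions of $\alpha$, hence concave; and (b) rectangular boxes, handled explicitly in \autoref{secondeigconcave} via the concavity computations of \autoref{concaveinverse}. Your proposal does not close the gap either. The perturbation-theoretic setup is correct --- for a simple eigenvalue the Hellmann--Feynman and second-order formulas are as you write them, and concavity is indeed equivalent to the higher-mode dominance inequality $c_1^2/(\lambda_2-\lambda_1) \leq \sum_{j\geq 3} c_j^2/(\lambda_j-\lambda_2)$ --- but that inequality is precisely the open content of the conjecture, and you do not prove it: the trial-function candidates are described only as ``promising,'' the cancellation in $c_1$ is asserted to be something convexity ``should force,'' and no estimate is actually carried out.

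One of your fallback routes is also structurally suspect. You propose to obtain the midpoint inequality $\lambda_2\big(\tfrac{\alpha_0+\alpha_1}{2}\big) \geq \tfrac12\big(\lambda_2(\alpha_0)+\lambda_2(\alpha_1)\big)$ by inserting a test function built from eigenfunctions at $\alpha_0$ and $\alpha_1$ into the variational characterization of $\lambda_2$. But $\lambda_2$ is a min-max over two-dimensional subspaces (see \eqref{minimax}): exhibiting a trial subspace yields an \emph{upper} bound on $\lambda_2$, never a lower bound, so this device cannot produce the inequality in the direction concavity requires. (This is exactly why concavity of $\lambda_1$ is free --- it is a pointwise minimum of affine functions of $\alpha$ --- while for $\lambda_2$ the max over a fixed subspace is convex in $\alpha$ and the subsequent min destroys any automatic convexity or concavity.) Your remarks about where convexity of $\Omega$ and the sign of $\alpha$ must enter are sensible, and the reduction to the dominance inequality is a legitimate way to formulate the problem, but as written the proposal is a research program with its decisive step missing, not a proof.
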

A special case in which concavity holds is when the domain has a plane of symmetry and the second eigenfunction is odd with respect to that plane --- for then the second eigenvalue equals the first eigenvalue of the mixed Dirichlet--Robin problem on one half of the domain, and by the Rayleigh principle that mixed eigenvalue is concave with respect to $\alpha$. This argument applies, for example, to the ball and to rectangular boxes. (For a comprehensive treatment of the ball's spectrum, see \cite[Section 5]{FL18a}.) 

For rectangular boxes we may proceed explicitly, and handle all $\alpha \in \R$. 
\begin{theorem}[Concavity of the first two eigenvalues with respect to the Robin parameter, on rectangular boxes] \label{secondeigconcave}
For each rectangular box $\mathcal{B}$, the first and second eigenvalues $\lambda_1(\mathcal{B};\alpha)$ and $\lambda_2(\mathcal{B};\alpha)$ are strictly concave functions of $\alpha \in \R$. 
\end{theorem}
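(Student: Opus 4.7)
The plan is to reduce to one dimension by separation of variables and then exploit the Rayleigh principle, which writes each one-dimensional Robin eigenvalue as an infimum of affine functions of $\alpha$.

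\textbf{Separation.} Write $\mathcal{B}=(0,a_1)\times\cdots\times(0,a_n)$, and let $\mu_k(a;\alpha)$ denote the $k$th eigenvalue of the 1D Robin problem on $(0,a)$ with parameter $\alpha$ at both endpoints. Product eigenfunctions give every eigenvalue of $\mathcal{B}$ as a sum $\mu_{k_1}(a_1;\alpha)+\cdots+\mu_{k_n}(a_n;\alpha)$, and ordering these sums shows
\begin{align*}
\lambda_1(\mathcal{B};\alpha) &= \sum_{i=1}^n\mu_1(a_i;\alpha),\\
\lambda_2(\mathcal{B};\alpha) &= \min_{1\le j\le n}\Bigl(\mu_2(a_j;\alpha)+\sum_{i\ne j}\mu_1(a_i;\alpha)\Bigr),
\end{align*}
since the cheapest excitation over the ground state comes from bumping a single index from $1$ to $2$ in the direction of smallest spectral gap. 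A finite sum of strictly concave functions is strictly concave, and so is the minimum of finitely many strictly concave functions: for distinct $x,y$ and $t\in(0,1)$, choosing $i$ with $f(z)=f_i(z)$ at $z=(1-t)x+ty$ gives $f(z)=f_i(z)>(1-t)f_i(x)+tf_i(y)\ge(1-t)f(x)+tf(y)$. So it suffices to show $\mu_1(a;\cdot)$ and $\mu_2(a;\cdot)$ are strictly concave on $\R$.

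\textbf{Concavity of 1D eigenvalues via Rayleigh.} The formula
\[
\mu_1(a;\alpha)=\inf_{u\in H^1(0,a)\setminus\{0\}}\frac{\int_0^a(u')^2\,dx+\alpha\bigl(u(0)^2+u(a)^2\bigr)}{\int_0^a u^2\,dx}
\]
displays $\mu_1(a;\cdot)$ as an infimum of affine functions of $\alpha$, hence concave. For $\mu_2$ I use reflective symmetry about the midpoint of $(0,a)$: eigenfunctions split into even and odd parity classes, the ground state is even (being positive), and $\mu_2$ is the first odd eigenvalue, whose eigenfunction vanishes at $a/2$. Restricting to $(0,a/2)$ represents
\[
\mu_2(a;\alpha)=\inf\frac{\int_0^{a/2}(u')^2\,dx+\alpha\,u(0)^2}{\int_0^{a/2}u^2\,dx}
\]
over nonzero $u\in H^1(0,a/2)$ with $u(a/2)=0$, again an infimum of affine functions, hence concave.

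\textbf{Strictness and the main obstacle.} Strict concavity reduces to showing that the slope of each infimum genuinely varies with $\alpha$. By Hadamard's formula the slope of $\mu_1(a;\cdot)$ at $\alpha$ equals $u_1(0)^2+u_1(a)^2$ for the $L^2$-normalized ground state, a positive quantity (the ground state being single-signed), which strictly decreases with $\alpha$ because the minimizer genuinely depends on $\alpha$; the same reasoning applies to $\mu_2$ on the half-interval since the mixed Robin--Dirichlet ground state there is also nonzero at $0$. The delicate point is the interlacing claim $E_1<O_1<E_2<O_2<\cdots$ needed to identify $\mu_2$ with the first odd eigenvalue for every real $\alpha$; this I would verify by tracking the transcendental equations $k\tan(ka/2)=\alpha$ and $k\cot(ka/2)=-\alpha$ (and their hyperbolic substitutes when $\mu<0$) recorded earlier in the paper. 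With that ingredient in hand, $\lambda_1$ is a sum and $\lambda_2$ a minimum of strictly concave functions, so both are strictly concave on $\R$.
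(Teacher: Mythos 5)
Your reduction to one dimension is the same as the paper's (its Lemmas on the first and second eigenvalues of a box identify $\lambda_1$ as the sum of the edge ground states and $\lambda_2$ as that sum with the longest edge bumped to its second eigenvalue; your ``min over which index gets bumped'' is an equivalent and correctly justified formulation, and your observation that a minimum of strictly concave functions is strictly concave is sound). Where you genuinely diverge is in how the one-dimensional concavity is obtained. The paper works from the explicit transcendental formulas $\mu_1=g_1^{-1}(\alpha t)^2/t^2$ (resp.\ $-h_1^{-1}(-\alpha t)^2/t^2$) and $\mu_2=g_2^{-1}(\alpha t)^2/t^2$ (resp.\ $-h_2^{-1}(-\alpha t)^2/t^2$), proves strict negativity of the relevant second derivatives in its Lemma on concavity of the inverse squared, and then must separately check that the one-sided slopes agree at the join points $\alpha=0$ and $\alpha=-1/t$ where the formula changes. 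You instead use the Rayleigh principle, representing $\mu_1$ and (via odd symmetry) $\mu_2$ as infima of affine functions of $\alpha$; this buys concavity on all of $\R$ in one stroke with no join-point bookkeeping, and it is essentially the symmetry argument the paper itself sketches in the discussion preceding the theorem. The price is that an infimum of affine functions is only concave, not strictly so, and your strictness step --- ``the slope strictly decreases because the minimizer genuinely depends on $\alpha$'' --- is asserted rather than proved. It can be closed: if $\mu_k$ were affine on an interval of $\alpha$-values, the tangency of the minimizing affine function at an interior point would force a single eigenfunction $u$ to minimize for two distinct parameters $\alpha'\neq\alpha''$, whence $(\alpha'-\alpha'')u=0$ at the Robin endpoint(s), so $u$ and (by the boundary condition) $u'$ both vanish there, giving $u\equiv 0$ by ODE uniqueness --- a contradiction; since a concave function failing strict concavity must be affine on some segment, strictness follows. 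With that sentence added, and with the interlacing fact that the second interval eigenfunction is odd for every $\alpha\in\R$ (which the paper does establish in its interval lemmas, so you may cite it rather than re-derive it), your argument is complete. The paper's explicit route delivers strictness automatically but at the cost of the computations in its concavity lemma; yours is softer and would generalize to any domain with the requisite symmetry, which is precisely the point of the paper's Conjecture on concavity for convex domains.
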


The gap $(\lambda_2-\lambda_1)(\mathcal{B};\alpha)$ appears to be concave too, for $\alpha>0$, according to  numerical investigations (omitted) that build on eigenvalue formulas for the box (as developed in \autoref{identifyinginterval} and \autoref{identifying}). I have not succeeded in proving this numerical observation. The gap cannot be concave for all $\alpha$, since the gap for the box is positive and tends to $0$ as $\alpha \to -\infty$, by \autoref{gapmonot}.

\section{\bf Optimal rectangular boxes for low Robin eigenvalues}
\label{optimal}

\section*{First Robin eigenvalue}

Faber and Krahn proved almost a century ago Rayleigh's conjecture that the first Dirichlet eigenvalue of the Laplacian is minimal on the ball, among domains of given volume. Bossel \cite{B86} proved the analogous result for Robin eigenvalues in $2$ dimensions with $\alpha>0$, by applying her new characterization of the first eigenvalue. Daners \cite{D06} extended Bossel's method to higher dimensions. An alternative approach via the calculus of variations was found more recently by Bucur and Giacomini \cite{BG10,BG15a}. 

For $\alpha < 0$, Bareket \cite{B77} conjectured that the ball would maximize (not minimize) the Robin eigenvalue among domains of given volume. Although Bareket's conjecture turns out to be false for large $\alpha<0$ due to an annular counterexample by Freitas and \Krejcirik\ \cite{FK15}, those authors did establish the conjecture in $2$ dimensions whenever $\alpha<0$ is small enough, depending only on the volume of the domain. Bareket's conjecture holds also when the domain is close enough to a ball, by Ferone, Nitsch and Trombetti \cite{FNT15}, and holds for all domains in $2$-dimensions if perimeter rather than area of the domain is normalized, by Antunes, Freitas and \Krejcirik\ \cite[Theorem 2]{AFK17}.  

For the class of rectangular boxes, the analogue of the Bossel--Daners theorem was proved by Freitas and Kennedy \cite[Theorem 4.1]{FK18} in $2$ dimensions and in all dimensions by Keady and Wiwatanapataphee \cite{KW18} (whose paper contains several other results for rectangles too). That is, they proved $\lambda_1(\mathcal{B};\alpha)$ is minimal for the cube among all rectangular boxes $\mathcal{B}$ of given volume, when $\alpha>0$. We state this result next as part (i), and prove also in part (ii) a reversed or Bareket-type inequality for all $\alpha<0$. 

\begin{theorem}[Extremizing the first Robin eigenvalue on rectangular boxes]\label{lambda1higherdim}\ 

\noindent (i) If $\alpha>0$ then $\lambda_1(\mathcal{B};\alpha)$ is positive and is minimal for the cube and only the cube, among rectangular boxes $\mathcal{B}$ of given volume. 

\noindent (ii) If $\alpha<0$ then $\lambda_1(\mathcal{B};\alpha)$ is negative and is maximal for the cube and only the cube, among rectangular boxes $\mathcal{B}$ of given volume. 
\end{theorem}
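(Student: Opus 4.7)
The natural first step is separation of variables to reduce to the one-dimensional problem. For a rectangular box $\mathcal{B}=\prod_{i=1}^n(0,l_i)$, the first Robin eigenfunction factors as a product of the 1D first Robin eigenfunctions on the factor intervals with parameter $\alpha$ at each endpoint, so
\[
\lambda_1(\mathcal{B};\alpha)=\sum_{i=1}^n\mu(l_i;\alpha),
\]
where $\mu(l;\alpha)$ denotes the first Robin eigenvalue of $-d^2/dx^2$ on $(0,l)$. The transcendental equations from \autoref{identifyinginterval} give $\mu=4y^2/l^2$ with $2y\tan y=\alpha l$, $y\in(0,\pi/2)$, when $\alpha>0$, and $\mu=-4y^2/l^2$ with $2y\tanh y=-\alpha l$, $y>0$, when $\alpha<0$. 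Thus $\mu(l;\alpha)$ has the sign of $\alpha$, which already yields the sign assertions on $\lambda_1$.

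Write $l_i=e^{t_i}$, so the volume constraint becomes $\sum t_i=\log V$, and the problem reduces to extremizing $\sum_i F(t_i)$ on that hyperplane, where $F(t):=\mu(e^t;\alpha)$. I claim that $F$ is strictly convex on $\R$ when $\alpha>0$ and strictly concave on $\R$ when $\alpha<0$. Granted the claim, a standard averaging argument finishes the proof: replacing any unequal pair $t_i,t_j$ by their common mean preserves $\sum t_k$ and strictly decreases (resp.\ increases) $\sum F(t_k)$ in the convex (resp.\ concave) case. Coercivity, coming from $\mu(l;\alpha)\sim 2\alpha/l$ as $l\to 0^+$, ensures the extremum is attained in the interior of the hyperplane, and hence uniquely at the cube $t_i\equiv(\log V)/n$.

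To prove the claim, differentiate the defining equation to get $y'(t)=ys/(s+y)$, where $s=\tfrac12\sin 2y$ for $\alpha>0$ and $s=\tfrac12\sinh 2y$ for $\alpha<0$. Differentiating $F(t)=\pm 4 y(t)^2 e^{-2t}$ twice and using $y''=\phi'(y)\phi(y)$ with $\phi(y)=ys/(s+y)$, a routine simplification yields
\[
F''(t)=\pm\frac{8e^{-2t} y^3}{(s+y)^3}\,Q(y),\qquad Q(y):=2y^2+s\bigl(2y-s+ys'\bigr),
\]
with $s'=\cos 2y$ or $\cosh 2y$ respectively. Set $\varphi(y):=2y-s+ys'$, so $Q=2y^2+s\varphi$ and the sign of $F''$ matches $\pm Q$. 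Substituting $u=2y$, a short rearrangement gives $2\varphi=2u+u\cos u-\sin u=(u-\sin u)+u(1+\cos u)$ in the trigonometric case; both summands are nonnegative on $[0,\pi]$ and the first is strictly positive for $u>0$, so $\varphi>0$ on $(0,\pi/2]$. In the hyperbolic case $2\varphi=2u+(u\cosh u-\sinh u)\geq 2u>0$ for $u>0$, since $u\cosh u-\sinh u$ has derivative $u\sinh u\geq 0$ and vanishes at $u=0$. In either case $Q>0$, giving the desired convexity or concavity of $F$.

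The main obstacle is identifying the two factorizations that render positivity transparent: $Q=2y^2+s\varphi$, and then the splitting $2\varphi=(u-\sin u)+u(1+\cos u)$ in the trigonometric case. Without these rearrangements the second-derivative formula is a tangle of mixed-sign terms whose positivity is far from obvious. Everything else --- separation of variables, the averaging argument, and the implicit differentiation --- is entirely standard, and an equivalent route via monotonicity of $l\mapsto l\,\partial_l\mu(l;\alpha)$ (the Lagrange multiplier condition) would encounter the same inequality.
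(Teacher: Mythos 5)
Your proposal is correct, and its skeleton is the same as the paper's: pass to logarithmic variables $l_i=e^{t_i}$ so the volume constraint becomes a hyperplane, establish a convexity property of the separated eigenvalue sum there, and conclude by symmetry. The difference lies in which convexity statement is proved and how. The paper proves the stronger fact that $\sqrt{\lambda_1(\mathcal{B}(e^z);\alpha)}$ (resp.\ $\sqrt{-\lambda_1}$ for $\alpha<0$) is strictly convex in $z$, by combining convexity of each component $\go(e^z)$, $\ho(e^z)$ (its Lemma on convexity with respect to $\log y$) with an elementary lemma that the Euclidean norm of a vector of nonnegative convex functions of separated variables is convex; the author explicitly notes this is stronger than the convexity of $\lambda_1$ itself used by Keady--Wiwatanapataphee. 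You instead prove strict convexity (for $\alpha>0$) or concavity (for $\alpha<0$) of each individual summand $t\mapsto\mu(e^t;\alpha)$, which is weaker but entirely sufficient, and you verify it by a direct implicit-differentiation computation. I checked your formulas: with $y'=ys/(s+y)$ one indeed gets $F''(t)=\pm 8e^{-2t}y^3(s+y)^{-3}\bigl(2y^2+s(2y-s+ys')\bigr)$, and your factorizations $Q=2y^2+s\varphi$ and $2\varphi=(u-\sin u)+u(1+\cos u)$ (resp.\ $2\varphi=2u+(u\cosh u-\sinh u)$) do make positivity transparent on the relevant ranges $u=2y\in(0,\pi)$ and $u>0$. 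Your endgame (pairwise averaging plus coercivity from $\mu(l;\alpha)\sim 2\alpha/l$ as $l\to0^+$) replaces the paper's argument that the gradient at the symmetric point is parallel to $(1,\dots,1)$; both are standard and correct. What your route buys is self-containedness at the level of this one theorem; what the paper's route buys is the sharper convexity of $\sqrt{\pm\lambda_1}$, recorded for its own interest.
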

The ``opposite'' extremal problems are known to have no solution, since when $\alpha>0$ the first eigenvalue $\lambda_1(\mathcal{B};\alpha)$ is unbounded above as the rectangular box degenerates in some  direction, and when $\alpha<0$ the eigenvalue is unbounded below; see \eqref{firsttoinfinity} later in the paper, for these facts. 

Next we restrict attention to rectangles $\mathcal{R}$ in $2$ dimensions. Scaling the Robin parameter by boundary length is found to yield a different (and, in some cases, better) result of Rayleigh type for the first eigenvalue. To state the result, we write 
\[
\text{$A=$ area of rectangle $\mathcal{R}$, \quad $L=$ length of $\partial \mathcal{R}$.}
\]
The quantity 
\[
\lambda_1(\mathcal{R};\alpha/L)A
\]
is scale invariant by an easy rescaling calculation, and it is known to be minimal among rectangles for the square when $\alpha=\infty$ (the Dirichlet case), when $\alpha=0$ (the trivial Neumann case), and when $\alpha \to -\infty$ (by substituting into Levitin and Parnovski's asymptotic for piecewise smooth domains \cite[Theorem 4.15]{H17},  \cite[{\S}3]{LP08}). Thus it is natural to suspect the square should be the minimizer for each real value of $\alpha$, including for $\alpha<0$. 
\begin{theorem}[Minimizing the first Robin eigenvalue on rectangles, with length scaling]\label{lambda1L} If $\alpha \neq 0$ then $\lambda_1(\mathcal{R};\alpha/L)A$ is minimal for the square and only the square, among rectangles $\mathcal{R}$. 
\end{theorem}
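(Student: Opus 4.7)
The plan is to reduce the two-dimensional optimization to a one-variable inequality for the normalized one-dimensional eigenvalue, then attack that inequality via the transcendental equation that defines the first Robin eigenvalue on an interval.

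Write $\mathcal{R} = (0,s)\times(0,t)$, so that $A = st$ and $L = 2(s+t)$. Separation of variables gives $\lambda_1(\mathcal{R};\alpha/L) = \mu_1(s;\alpha/L) + \mu_1(t;\alpha/L)$, where $\mu_1(\cdot\,;\beta)$ is the first Robin eigenvalue on an interval, and the one-dimensional scaling $\mu_1(s;\beta) = s^{-2} M(\beta s)$ with $M(w) := \mu_1(1;w)$ lets me express everything in terms of $M$. After normalizing $A=1$ and setting $\rho = t/s \in (0,1]$ (so $s = 1/\sqrt{\rho}$, $t = \sqrt{\rho}$ and $L = 2(1+\rho)/\sqrt{\rho}$), the scale-invariant quantity becomes
\[
F(\rho) \,:=\, \lambda_1(\mathcal{R};\alpha/L)\,A \,=\, \rho\,M\!\left(\tfrac{\alpha}{2(1+\rho)}\right) + \tfrac{1}{\rho}\,M\!\left(\tfrac{\alpha\rho}{2(1+\rho)}\right),
\]
so minimality of the square amounts to $F(\rho) > F(1) = 2M(\alpha/4)$ for every $\rho \neq 1$.

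I would first collect the easy features. The symmetry $F(\rho) = F(1/\rho)$ forces $F'(1) = 0$. As $\rho \to 0^+$ one finds $F(\rho) \to \alpha$: one interval eigenvalue collapses to zero while the other converges to $\alpha$ via $M(w) \approx 2w$ at the origin. The identity $M(w) = 4T(w/2)^2$ with $T\tan T = w/2$ for $w\geq 0$ (and the hyperbolic companion $T\tanh T = -w/2$ for $w<0$) implies $M(w) < 2w$ whenever $w \neq 0$, because $\tan T > T$ on $(0,\pi/2)$ and $\tanh T < T$ on $(0,\infty)$; hence $F(1) = 2M(\alpha/4) < \alpha$ for every $\alpha \neq 0$, which is consistent with a strict interior minimum at $\rho = 1$. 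A direct Taylor expansion of $F$ at $\rho = 1$ using $M(w) = 2w - w^2/3 + 2w^3/45 + O(w^4)$ then confirms $F''(1) > 0$ for small $|\alpha|$, giving local minimality at least in that regime.

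The main work is upgrading this to a global inequality. Substituting $u = \alpha/(2(1+\rho))$ and $v = \alpha\rho/(2(1+\rho))$, which satisfy $u+v = \alpha/2$ and $v/u = \rho$, and clearing denominators via $F \cdot uv = v^2 M(u) + u^2 M(v)$, the theorem reduces to the symmetric one-parameter inequality
\[
v^2 M(u) + u^2 M(v) \,\geq\, 2\,uv\,M\!\left(\tfrac{u+v}{2}\right)
\]
for all $u,v$ of the same nonzero sign, with equality iff $u=v$. Using $M = 4T(\cdot/2)^2$ converts this into an inequality in the auxiliary quantities $A := T(u/2)$, $B := T(v/2)$, $C := T((u+v)/4)$, coupled by their defining trigonometric or hyperbolic equations. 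I would then parameterize the constraint line $u+v = \alpha/2$ by $h = (u-v)/2$, confirm the critical point at $h=0$, and track the sign of the derivative in $h$ by differentiating the implicit relations; the case $\alpha < 0$ is handled in parallel with the hyperbolic companion.

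The main obstacle is this final step. Naive convexity tools are insufficient: $M$ is concave (by concavity of $\alpha \mapsto \lambda_1$), so any Jensen-type bound on $v^2 M(u) + u^2 M(v)$ points the wrong way, while $\log T$ turns out to be concave as well -- one computes $(\log T)'(z) = 1/(z + T(z)^2\sec^2 T(z))$ with an increasing denominator -- so AM--GM applied to $T(u/2)T(v/2)$ versus $T((u+v)/4)^2$ also gives the wrong direction at the critical point. A more delicate inequality, tuned to the joint structure of the weights $u^2, v^2$ and the values $M(u), M(v)$, is required, and identifying the right auxiliary function on $(0,\pi/2)$ (and its hyperbolic twin for $\alpha<0$) whose monotonicity encodes the global minimum at the square is where I expect the bulk of the analytic work to lie.
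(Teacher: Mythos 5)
Your reduction is sound as far as it goes --- normalizing and passing to the constraint $u+v=\alpha/2$ is, up to a change of variables, exactly the paper's setup (the paper fixes the perimeter $L=2$ and writes $\mathcal{R}(p)=(0,p)\times(0,1-p)$, so the two interval arguments $\alpha p/4$ and $\alpha(1-p)/4$ have fixed sum) --- but the proof stops precisely where the theorem lives. The symmetric inequality $v^2M(u)+u^2M(v)\geq 2uv\,M\bigl(\tfrac{u+v}{2}\bigr)$ is asserted, its difficulty is correctly diagnosed, and then the key step is deferred to ``the bulk of the analytic work.'' That is a genuine gap, not a routine verification: everything before it (the symmetry $F(\rho)=F(1/\rho)$, the limit $F(\rho)\to\alpha$, the value $F(1)<\alpha$, local minimality for small $|\alpha|$) establishes only that the square is a critical point and a plausible global minimizer.

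The missing idea is that one does not need a coupled three-point inequality relating $M(u)$, $M(v)$ and $M\bigl(\tfrac{u+v}{2}\bigr)$. For $\alpha>0$ the paper shows (Lemma~\ref{convexityyy}(i)) that each summand separately, $p\mapsto p(1-p)\,\go(\alpha p/4)^2$, is strictly convex in $p$; the sum is then convex, and the reflection symmetry $p\mapsto 1-p$ forces the minimum at $p=1/2$. Proving that termwise convexity is itself nontrivial (it comes down to positivity of $-1+4x^2+\cos 4x+x\sin 4x$ via a series/derivative argument), but it is a one-function statement, not a joint one. More importantly, your expectation that ``the case $\alpha<0$ is handled in parallel with the hyperbolic companion'' is not borne out: for $\alpha<0$ the corresponding function $p\mapsto p(1-p)\,\ho(\beta p/4)^2$ is \emph{not} convex, and the paper instead proves (Lemma~\ref{concavityH1}) that it is increasing on $\bigl(0,y_1\bigr)$ and concave and decreasing on $\bigl(y_1,1\bigr)$ for some $y_1<1/2$, which after symmetrization still pins the extremum at $p=1/2$. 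So the negative-parameter case requires a structurally different argument, and neither case is reachable from the facts you have established.
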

When $\alpha=0$, every domain has the same first Neumann eigenvalue, namely $\lambda_1=0$.

\autoref{lambda1L} implies the $2$-dimensional case of \autoref{lambda1higherdim} when $\alpha>0$, as follows. Suppose $\mathcal{R}$ and $\mathcal{S}$ are a rectangle and a square having the same area $A$. Since $\lambda_1$ is increasing with respect to the Robin parameter, the rectangular isoperimetric inequality $4\sqrt{A} \leq L$ implies that 
\[
\lambda_1(\mathcal{R};\alpha/4\sqrt{A}) \geq \lambda_1(\mathcal{R};\alpha/L) \geq \lambda_1(\mathcal{S};\alpha/L(\mathcal{S})) ,
\]
with the final inequality holding by \autoref{lambda1L}. Since $L(\mathcal{S})=4\sqrt{A}$, we conclude $\lambda_1(\mathcal{R};\alpha/4\sqrt{A}) \geq \lambda_1(\mathcal{S};\alpha/4\sqrt{A})$. Replacing $\alpha$ by $4\sqrt{A} \alpha$, we deduce $\lambda_1(\mathcal{R};\alpha) \geq \lambda_1(\mathcal{S};\alpha)$ for all $\alpha>0$, which is  \autoref{lambda1higherdim}(i) in $2$ dimensions. Incidentally, this argument reveals that the scale invariant form of \autoref{lambda1higherdim}(i), if one wants to write it that way, involves minimizing $\lambda_1(\mathcal{R};\alpha/\sqrt{A})A$ in $2$ dimensions and in higher dimensions minimizing $\lambda_1(\mathcal{B};\alpha/V^{1/n})V^{2/n}$, where $V$ is the volume of the box $\mathcal{B}$.

One would like to extend \autoref{lambda1L} to higher dimensions for $\lambda_1(\mathcal{B};\alpha/S^{1/(n-1)})V^{2/n}$, or even better for $\lambda_1(\mathcal{B};\alpha V^{1-2/n}/S)V^{2/n}$, where $S$ the surface area of the box. I have not succeeded in establishing these generalizations.  

For arbitrary convex domains, an improved Rayleigh--Bossel type conjecture is suggested by \autoref{lambda1L}.  
\begin{conjx}[Minimizing the first Robin eigenvalue on convex domains, with length scaling] \label{lambda1Lgeneral}
For each $\alpha \in \R$, the scale invariant ratio
\[
    \lambda_1(\Omega;\alpha/L(\Omega)) A(\Omega)
\]
is minimal when the convex bounded planar domain $\Omega$ is a disk. 
\end{conjx}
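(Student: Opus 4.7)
The plan is to split at $\alpha=0$, where the Neumann case is trivial since $\lambda_1\equiv 0$. By the scale invariance of the functional, I would normalize to $A(\Omega)=A(D)=1$, so that $L(D)=2\sqrt{\pi}$ and $L(\Omega)\geq L(D)$ by the classical isoperimetric inequality, with equality only for the disk. The task then reduces to
\[
\lambda_1\bigl(\Omega;\alpha/L(\Omega)\bigr) \geq \lambda_1\bigl(D;\alpha/L(D)\bigr).
\]

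For $\alpha>0$, this is a genuine strengthening of Bossel--Daners. That inequality by itself only yields $\lambda_1(\Omega;\beta)\geq \lambda_1(D;\beta)$ for identical parameters $\beta$, whereas here $\Omega$ carries the smaller parameter $\alpha/L(\Omega)<\alpha/L(D)$, so monotonicity of $\lambda_1$ in the parameter works against us. I would attempt to refine Bossel's inner-parallel-set characterization to produce a quantitative deficit: show that the shape margin $\lambda_1(\Omega;\beta)-\lambda_1(D;\beta)$ in Bossel--Daners admits a positive lower bound proportional to the perimeter excess $L(\Omega)-L(D)$. That excess is precisely what one needs in order to absorb the gap between $\lambda_1(D;\alpha/L(\Omega))$ and $\lambda_1(D;\alpha/L(D))$ arising from the parameter reduction.

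For $\alpha<0$, $\lambda_1$ is negative and the disk is conjectured to deliver the most negative value of $\lambda_1\cdot A$. Here \cite{AFK17} provides the opposite direction from the one needed at fixed perimeter: it asserts the disk \emph{maximizes} $\lambda_1(\cdot;\beta)$ among domains of prescribed $L$. So one must couple that statement with the area deficit $A(D_L)-A(\Omega)\geq 0$, exploiting that multiplying a negative $\lambda_1$ by a larger area amplifies in the negative direction. My plan is to recast the problem as a two-step statement (first: among convex bodies of fixed perimeter $L$, the disk minimizes $\lambda_1(\cdot;\alpha/L)\cdot A$ when $\alpha<0$; second: conclude by varying $L$ using scale invariance of the functional). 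An alternative route is to adapt the free-discontinuity formulation of \cite{BG10,BG15a} to the perimeter-scaled parameter.

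A preliminary step on both fronts is a shape-derivative calculation at the disk: verify that the disk is a critical point of $F(\Omega;\alpha):=\lambda_1(\Omega;\alpha/L(\Omega))\,A(\Omega)$, and compute the Hessian to confirm strict positivity along non-trivial boundary perturbations. This would at least secure the conjecture in a $C^2$-neighborhood of the disk and dictate the sign that any global argument must ultimately produce. The main obstacle is that the conjecture intertwines two independently sharp inequalities (Bossel--Daners or AFK-type, together with isoperimetric) in a manner that does not seem monotone under any standard geometric deformation: mean-curvature flow does not obviously decrease $F$, and symmetrization is unavailable because the parameter $\alpha/L$ itself depends on the rearranged domain.
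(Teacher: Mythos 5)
The statement you are addressing is \autoref{lambda1Lgeneral}, which is an open \emph{conjecture} in the paper, not a theorem: the paper offers no proof, only supporting evidence (the rectangular case in \autoref{lambda1L}, the endpoint cases $\alpha=\infty$ via Faber--Krahn and $\alpha=0$ trivially, the asymptotic regime $\alpha\to-\infty$ via \eqref{Robinasymptotic} combined with the isoperimetric inequality, and Bucur's spike counterexample showing convexity is essential). Your submission is likewise a plan rather than a proof: every load-bearing step is deferred to an unestablished lemma, so there is no argument here to certify.

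Two of those deferred steps deserve concrete criticism. First, for $\alpha>0$ you propose a stability version of Bossel--Daners in which the deficit $\lambda_1(\Omega;\beta)-\lambda_1(D;\beta)$ is bounded below by a quantity proportional to the perimeter excess $L(\Omega)-L(D)$. As stated this is false: take $\beta=\alpha/L(\Omega)$ and let $\Omega$ be the paper's spike perturbation, so that $L(\Omega)\to\infty$ while both $\lambda_1(\Omega;\beta)$ and $\lambda_1(D;\beta)$ tend to $0$; the deficit stays bounded (indeed vanishes) while the perimeter excess blows up. Any viable version must let the constant degenerate with $\beta$ \emph{and} must fail for nonconvex domains, since the conjecture itself fails there --- yet convexity enters nowhere in your plan, which is a structural red flag: a mechanism that would prove the statement for all Lipschitz domains cannot be correct. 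Known quantitative Faber--Krahn/Bossel--Daners results are phrased in terms of Fraenkel asymmetry, not perimeter deficit, and do not supply what you need. Second, for $\alpha<0$ your ``first step'' (among convex bodies of fixed perimeter $L$, the disk minimizes $\lambda_1(\cdot;\alpha/L)\,A$) is itself an unresolved competition: by \cite[Theorem 2]{AFK17} the disk makes $\lambda_1$ \emph{least} negative at fixed perimeter while simultaneously maximizing $A$, and you give no estimate showing the area gain dominates the eigenvalue loss. The shape-derivative computation at the disk is a sensible sanity check but yields only a local statement. In short: you have correctly diagnosed why the conjecture is hard, but none of the proposed ingredients is available, and one of them is false in the form you state it.
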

The conjecture arose in conversation with P. Freitas, and is stated in our work \cite{FL18b}.

For $\alpha<0$, this conjecture goes in the opposite direction to the upper bound conjectured by Bareket, which does not employ length scaling on the Robin parameter. 

For $\alpha>0$, the conjecture would strengthen Bossel's theorem in the class of convex domains, as one sees by replacing $L(\Omega)$ with $\sqrt{A(\Omega)}$ and arguing with rescaling like we did for rectangles above. 

\autoref{lambda1Lgeneral} is known to hold for $\alpha=\infty$ (the usual Faber--Krahn inequality), and trivially for $\alpha=0$, and it holds also on smooth domains as $\alpha \to -\infty$  since 
\begin{equation} \label{Robinasymptotic}
\lambda_1(\Omega;\alpha/L(\Omega)) A(\Omega) \sim - \frac{A(\Omega)}{L(\Omega)^2} \alpha^2 \qquad \text{as $\alpha \to -\infty$}
\end{equation}
by the Robin asymptotic of Lacey \emph{et al.}\ \cite[Theorem 4.14]{H17}, \cite{LOS98}, noting $A/L^2$ is maximal for the disk by the isoperimetric theorem. 

\autoref{lambda1Lgeneral} can fail for nonconvex domains, as Dorin Bucur pointed out to me on a sunny morning during a conference in Santiago, by using boundary perturbation arguments. For $\alpha>0$ one can drive $\lambda_1(\Omega;\alpha/L(\Omega))$ arbitrarily close to $0$ by imposing a boundary perturbation that greatly increases the perimeter and barely changes the area. For example, one could add to the domain an outward spike of width $\epsilon^2$ and length $1/\epsilon$ and then construct a trial function that equals $1$ on the original domain and vanishes on the spike, except for a transition zone of length $1$; the Rayleigh quotient is then $O(\epsilon)$ as $\epsilon \to 0$. For $\alpha<0$ one can drive $\lambda_1(\Omega;\alpha/L(\Omega))$ arbitrarily close to $-\infty$ by doing the same spike perturbation except taking the complementary trial function, that is, the function that vanishes on the original domain and equals $1$ on the spike except for the transition zone of length $1$; its Rayleigh quotient equals $\alpha/\epsilon+O(1)$ as $\epsilon \to 0$.

In the reverse direction to the conjecture, a sharp \emph{upper} bound on the first eigenvalue is known for general domains, when the Robin parameter is scaled by boundary length.
\begin{theorem}[Maximizing the first Robin eigenvalue, with length scaling; see \protect{\cite[Theorem A]{FL18b}}] \label{linearbound}
Fix $\alpha \neq 0$. If $\Omega$ is a bounded, Lipschitz planar domain then 
\[
\lambda_1\big( \Omega;\alpha/L(\Omega) \big) A(\Omega) < \alpha   
\]
with equality holding in the limit for rectangular domains that degenerate to a line segment (meaning the aspect ratio tends to infinity). More generally, if $\Omega \subset \Rn, n \geq 2$ is a bounded Lipschitz domain then 
\[
\lambda_1\big(\Omega;\alpha V(\Omega)^{1-2/n} /S(\Omega)\big) V(\Omega)^{2/n} < \alpha   
\]
with equality holding in the limit for degenerate rectangular boxes (which means as $S/V^{(n-1)/n} \to \infty$). 
\end{theorem}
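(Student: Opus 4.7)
The plan is to obtain the inequality by inserting the constant function into the Rayleigh principle, and to verify sharpness by separation of variables on a rectangular box that degenerates to a lower-dimensional set.

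\medskip
\textbf{Upper bound via the constant trial function.} First I would plug $u \equiv 1$ into the variational characterization \eqref{minimax} to get
\[
\lambda_1(\Omega;\beta) \le \frac{\beta \, L(\Omega)}{A(\Omega)}
\]
in the planar case, since $\nabla 1 \equiv 0$, $\int_{\partial\Omega} 1^2\,dS = L(\Omega)$, and $\int_\Omega 1^2\,dx = A(\Omega)$. Setting $\beta = \alpha/L(\Omega)$ and rearranging gives the non-strict inequality $\lambda_1(\Omega;\alpha/L(\Omega))\,A(\Omega) \le \alpha$. Strictness for $\alpha \ne 0$ follows because equality in the Rayleigh principle would force $u \equiv 1$ to be a first eigenfunction, and the Robin boundary condition then forces $\alpha \cdot 1 = 0$ on $\partial\Omega$, a contradiction. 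The $n$-dimensional version follows identically: the Rayleigh quotient of the constant is $\beta\, S(\Omega)/V(\Omega)$, and the choice $\beta = \alpha V(\Omega)^{1-2/n}/S(\Omega)$ turns this into $\alpha V(\Omega)^{-2/n}$, yielding $\lambda_1 \, V^{2/n} \le \alpha$, strict whenever $\alpha \ne 0$.

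\medskip
\textbf{Sharpness via degenerating rectangles.} For a rectangle $\mathcal{R} = [0,a]\times[0,b]$ with $A = ab$ fixed and $a/b\to\infty$, we have $L = 2(a+b) \to \infty$ and therefore $\beta := \alpha/L \to 0$. By separation of variables, $\lambda_1(\mathcal{R};\beta) = \mu_1(a;\beta) + \mu_1(b;\beta)$, where $\mu_1(\ell;\beta)$ denotes the first eigenvalue of the one-dimensional Robin problem on $(0,\ell)$, determined by the transcendental equation $\tan(k\ell) = 2\beta k/(k^2 - \beta^2)$ (and its hyperbolic analogue for $\beta<0$). A standard implicit-function expansion near $\beta = 0$ gives $\mu_1(\ell;\beta) = 2\beta/\ell + O(\beta^2)$. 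Summing,
\[
\lambda_1(\mathcal{R};\alpha/L) = \beta\Big(\frac{2}{a}+\frac{2}{b}\Big) + O(\beta^2) = \frac{\beta L}{A} + O(\beta^2) = \frac{\alpha}{A} + O\!\Big(\frac{\alpha^2}{L^2}\Big),
\]
so $\lambda_1(\mathcal{R};\alpha/L)\,A \to \alpha$ as $L \to \infty$. The $n$-dimensional case is analogous: for a box $[0,a_1]\times\cdots\times[0,a_n]$ with $V$ fixed and $S/V^{(n-1)/n}\to\infty$, one has $\beta \to 0$, and summing the 1D expansions and using $\sum 2/a_i = S/V$ produces $\lambda_1\,V^{2/n} \to \alpha$.

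\medskip
\textbf{Anticipated main obstacle.} There is no genuine difficulty. The upper bound is a single trial-function computation, and the sharpness reduces to the classical small-$\beta$ expansion of the one-dimensional Robin eigenvalue, which follows at once from the implicit function theorem applied to the transcendental equation above. The only care needed is in keeping track of the two 1D factors as the box degenerates, but the control $|\beta| = O(1/L)$ makes the $O(\beta^2)$ correction vanish in the limit.
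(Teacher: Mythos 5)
Your proposal follows essentially the same route as the paper: the upper bound comes from the constant trial function in the Rayleigh quotient (with the same strictness argument, phrased via the boundary condition rather than via $-\Delta 1=0\Rightarrow\lambda_1=0$, both of which are fine), and sharpness comes from separation of variables on a degenerating box.

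One step needs more care than you give it. You invoke ``a standard implicit-function expansion near $\beta=0$'' to write $\mu_1(\ell;\beta)=2\beta/\ell+O(\beta^2)$, but that is a fixed-$\ell$ statement, and for the degenerating box the side lengths are not fixed: the long side has $\ell\to\infty$ with $\beta\ell$ bounded away from $0$ (indeed $\beta\ell\to\alpha/2$ in your normalization), so the small-parameter expansion does not literally apply there, and a priori the $O(\beta^2)$ constant could depend on $\ell$. The conclusion is still true, but to get it you need a bound on $\mu_1(\ell;\beta)-2\beta/\ell$ that is uniform in $\ell$; this is exactly what the paper's \autoref{inversebounds} supplies, namely the global inequalities $y-y^2<g_1^{-1}(y)^2<y$ (and $y<h_1^{-1}(y)^2<y+y^2$) valid for \emph{all} $y>0$, which give $|\mu_1(\ell;\beta)-2\beta/\ell|\le\beta^2$ with an absolute constant and hence the clean estimate $\lambda_1\ge\alpha-n\alpha^2/S^2$ without distinguishing long from short sides. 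Alternatively you could argue directly that the long side's contribution to $\lambda_1\cdot A$ is $O(b/a)\to0$ while the short side (where $\beta\ell\to0$ genuinely holds) carries the whole limit $\alpha$; either repair closes the gap.
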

In the omitted case $\alpha=0$, all domains have first Neumann eigenvalue $\lambda_1(\Omega;0) = 0$. 

Note that \autoref{linearbound} is sharp for fixed $\alpha$, but not sharp for a fixed domain $\Omega$ as $\alpha \to \pm \infty$, since the first Robin eigenvalue approaches a finite number (the Dirichlet eigenvalue) as $\alpha \to \infty$ and approaches $-\infty$ quadratically rather than linearly as $\alpha \to -\infty$, by the asymptotic formula \eqref{Robinasymptotic}. 

\section*{Second Robin eigenvalue}

The second Dirichlet eigenvalue is minimal for the union of two balls, under a volume constraint; this observation by Krahn \cite{K26} was extended by Kennedy \cite{K09} from the Dirichlet to the Robin case for $\alpha>0$. The survey article \cite[{\S}4.6.1]{H17} gives a clear account of these lower bounds, which are applications of the Faber--Krahn and Bossel--Daners theorems, respectively. 

Upper bounds do not exist for $\alpha>0$. The second Dirichlet eigenvalue has no upper bound, since a thin rectangular box of given volume can have arbitrarily large eigenvalue. The same reasoning holds in the Robin case when $\alpha>0$, as was remarked after \autoref{lambda1higherdim}. 

For $\alpha=0$, the second Neumann eigenvalue does have an upper bound, being largest for the ball by work of Szeg\H{o} \cite{S54} for simply connected domains in the plane and Weinberger \cite{W56} for domains in all dimensions. This Neumann result was extended recently to the second Robin eigenvalue for a range of $\alpha \leq 0$ by Freitas and Laugesen \cite{FL18a}. Specifically, they proved $\lambda_2(\Omega;\alpha)$ is maximal for the ball $B(R)$ having the same volume as $\Omega$, for each $\alpha \in \big[-(1+1/n)R^{-1},0 \big]$. The result fails when $\alpha<0$ is large in magnitude, by an annular counterexample. Corollaries include Weinberger's result for the Neumann eigenvalue ($\alpha=0$), and Brock's sharp upper bound \cite{B01} on the first nonzero Steklov eigenvalue, which follows from taking $\alpha=-R^{-1}$. 

The analogous assertions for rectangular boxes hold for all $\alpha \leq 0$:
\begin{theorem}[Maximizing the second Robin eigenvalue on rectangular boxes]\label{lambda2higherdim}
If $\alpha \leq 0$ then $\lambda_2(\mathcal{B};\alpha)$ is maximal for the cube and only the cube, among rectangular boxes $\mathcal{B}$ of given volume. 
\end{theorem}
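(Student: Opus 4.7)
The plan is to exploit separation of variables. Write $\mathcal{B}=\prod_{i=1}^n (0,l_i)$ and $\mu_k(l;\alpha)$ for the $k$-th Robin eigenvalue on the interval $(0,l)$ with parameter $\alpha$. The Robin spectrum of $\mathcal{B}$ is the sum-lattice $\bigl\{\sum_{i=1}^n \mu_{k_i}(l_i;\alpha):k_i\in\N\bigr\}$, whose smallest element is $\lambda_1(\mathcal{B};\alpha)=\sum_i \mu_1(l_i;\alpha)$. Any non-trivial change of multi-index increases the sum by $\sum_i[\mu_{k_i}(l_i;\alpha)-\mu_1(l_i;\alpha)]\ge 0$, and this increment is minimized by raising a single $k_i$ from $1$ to $2$. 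So the key decomposition is
\[
\lambda_2(\mathcal{B};\alpha) \;=\; \lambda_1(\mathcal{B};\alpha) \;+\; \min_{1\le i\le n} g(l_i;\alpha), \qquad g(l;\alpha) := \mu_2(l;\alpha)-\mu_1(l;\alpha).
\]

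I would then maximize each summand separately under the constraint $\prod_i l_i = V$. The first summand is handled directly by \autoref{lambda1higherdim}(ii), which identifies the cube as the unique maximizer of $\lambda_1(\mathcal{B};\alpha)$ when $\alpha<0$; the case $\alpha=0$ is trivial since $\lambda_1\equiv 0$. For the gap term, the key claim is that $l\mapsto g(l;\alpha)$ is strictly decreasing on $(0,\infty)$ for each fixed $\alpha\le 0$. Granting this, $\min_i g(l_i;\alpha)=g(\max_i l_i;\alpha)$ is strictly maximized by making $\max_i l_i$ as small as the constraint allows, namely at the cube $l_1=\cdots=l_n=V^{1/n}$. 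Both optimizations agree on the cube, so the cube is the unique maximizer of $\lambda_2(\mathcal{B};\alpha)$; in the $\alpha=0$ case the strict maximality comes purely from the gap term.

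To establish the $l$-monotonicity of $g$ I would use the scaling $y=x/l$, which yields $\mu_k(l;\alpha) = \mu_k(1;l\alpha)/l^2$ and therefore $g(l;\alpha)=h(l\alpha)/l^2$ with $h(\beta):=g(1;\beta)$. Differentiating,
\[
l^3\,\partial_l g(l;\alpha) \;=\; \beta\, h'(\beta) - 2 h(\beta), \qquad \beta:=l\alpha.
\]
For $\alpha\le 0$ we have $\beta\le 0$, while $h(\beta)>0$ and $h'(\beta)\ge 0$ by the one-dimensional case of \autoref{gapmonot} (Smits' interval result). The bracket is therefore bounded above by $-2h(\beta)<0$, giving $\partial_l g<0$ strictly.

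The main obstacle I anticipate is precisely this monotonicity of $g$ in $l$, because $\mu_1$ and $\mu_2$ may change sign as $l$ varies and the transcendental equations defining them take different forms in different regimes, making a direct analysis unpleasant. The scaling reduction above bypasses that difficulty by rerouting the problem through the already-available gap monotonicity in $\alpha$, so that only the structural bookkeeping (the sum-lattice analysis and the invocation of \autoref{lambda1higherdim}(ii)) remains to conclude the proof.
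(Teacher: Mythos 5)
Your proof is correct, and it takes a genuinely different route from the paper's. The paper starts from the same separation-of-variables identity $\lambda_2(\mathcal{B}) = \lambda_2(\mathcal{I}(w_1)) + \lambda_1(\mathcal{I}(w_2)) + \dots + \lambda_1(\mathcal{I}(w_n))$ (with $w_1$ the longest half-edge, \autoref{secondeigenbox}) but then maximizes in two equalization steps: first it replaces the cross-section $\mathcal{I}(w_2)\times\dots\times\mathcal{I}(w_n)$ by the $(n-1)$-dimensional cube of the same volume, invoking \autoref{lambda1higherdim}(ii) one dimension down, and then it shrinks $w_1$ and grows the remaining edges to the common value $t=(w_1\cdots w_n)^{1/n}$, using that $\lambda_1(\mathcal{I}(t);\alpha)$ is increasing and $\lambda_2(\mathcal{I}(t);\alpha)$ is decreasing in $t$ when $\alpha<0$ (\autoref{1dimneg}). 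You instead split off the gap, $\lambda_2(\mathcal{B}) = \lambda_1(\mathcal{B}) + (\lambda_2-\lambda_1)(\text{longest edge})$ --- which is the paper's \autoref{boxgap} --- and maximize the two summands separately: the first by \autoref{lambda1higherdim}(ii) at full dimension, the second by minimizing the longest edge under the volume constraint. This is in effect the additive analogue of how the paper later derives \autoref{ratio} from \autoref{gapSV}, and it handles $\alpha=0$ and $\alpha<0$ uniformly, with strictness supplied by the gap term. Your scaling identity $\mu_k(l;\alpha)=\mu_k(1;l\alpha)/l^2$, which converts the needed length-monotonicity of the interval gap into $\alpha$-monotonicity (\autoref{gapmonot}, Smits), is a neat alternative; the paper instead gets length-monotonicity of the gap for $\alpha\le 0$ directly and for free from \autoref{1dimneg}, since $\lambda_1$ increases and $\lambda_2$ decreases with length. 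Two small points to tighten: the derivative identity $l^3\,\partial_l g = \beta h'(\beta)-2h(\beta)$ tacitly assumes $h$ is differentiable at the branch-transition values of $\beta$ (this holds --- the paper checks matching one-sided slopes in the proof of \autoref{secondeigconcave} --- but you can avoid calculus entirely: $h>0$ and $h$ increasing already force $h(l\alpha)/l^2$ to be strictly decreasing in $l$ when $\alpha\le 0$); and the sum-lattice step should note that the minimal positive increment is achieved by raising exactly one index to $2$ because each term $\mu_{k_i}-\mu_1$ is strictly positive once $k_i\ge 2$, which you do implicitly.
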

\begin{corollary}[Maximizing the first nonzero Steklov eigenvalue on rectangular boxes]\label{sigma1higherdim}
The first nonzero Steklov eigenvalue $\sigma_1(\mathcal{B})$ is maximal for the cube and only the cube, among rectangular boxes $\mathcal{B}$ of given volume. 
\end{corollary}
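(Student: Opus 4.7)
The plan is to derive the corollary directly from \autoref{lambda2higherdim} via the standard identification of $\sigma_1(\mathcal{B})$ as the magnitude of the horizontal intercept of the curve $\alpha \mapsto \lambda_2(\mathcal{B};\alpha)$. Indeed, a harmonic function $u$ on $\mathcal{B}$ satisfying $\partial u/\partial\nu = \sigma u$ on $\partial\mathcal{B}$ is exactly a Robin eigenfunction with eigenvalue $0$ at parameter $\alpha = -\sigma$. Combined with strict monotonicity of $\lambda_2(\mathcal{B};\cdot)$ in $\alpha$, the positivity of the first nonzero Neumann eigenvalue $\lambda_2(\mathcal{B};0)>0$, and the divergence $\lambda_2(\mathcal{B};\alpha) \to -\infty$ as $\alpha \to -\infty$, this identifies $\sigma_1(\mathcal{B})$ as the unique positive number for which $\lambda_2(\mathcal{B};-\sigma_1(\mathcal{B}))=0$, as illustrated in \autoref{firsttwo}.

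Now let $\mathcal{C}$ be the cube of the same volume as $\mathcal{B}$, and write $\alpha_\mathcal{B}=-\sigma_1(\mathcal{B})\leq 0$. Applying \autoref{lambda2higherdim} at $\alpha=\alpha_\mathcal{B}$ yields
\[ 0 = \lambda_2(\mathcal{B};\alpha_\mathcal{B}) \leq \lambda_2(\mathcal{C};\alpha_\mathcal{B}), \]
with strict inequality if $\mathcal{B}\neq\mathcal{C}$. Since $\lambda_2(\mathcal{C};\cdot)$ is strictly increasing in $\alpha$ and vanishes at $-\sigma_1(\mathcal{C})$, one concludes $-\sigma_1(\mathcal{C}) \leq -\sigma_1(\mathcal{B})$, that is $\sigma_1(\mathcal{C})\geq\sigma_1(\mathcal{B})$, with equality only when $\mathcal{B}$ is the cube. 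There is essentially no obstacle beyond verifying the Robin--Steklov correspondence: the key point is simply that \autoref{lambda2higherdim} supplies a strict comparison for every $\alpha\leq 0$, which in particular applies at the negative value $\alpha=\alpha_\mathcal{B}$.
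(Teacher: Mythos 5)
Your proposal is correct and follows essentially the same route as the paper: identify $\sigma_1(\mathcal{B})$ with the magnitude of the horizontal intercept of $\alpha \mapsto \lambda_2(\mathcal{B};\alpha)$, then apply \autoref{lambda2higherdim} at $\alpha = -\sigma_1(\mathcal{B}) \le 0$ together with strict monotonicity in $\alpha$ to compare intercepts. The paper's proof differs only in spelling out slightly more explicitly why the intercept of the $\lambda_2$-curve is $-\sigma_1$ and not some higher Steklov eigenvalue (namely, $\lambda_1 < 0 < \lambda_2$ on the intermediate interval).
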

The Steklov result in \autoref{sigma1higherdim} was proved directly by Girouard \emph{et al.}\ \cite[Theorem 1.6]{GLPS17}. Indeed, they proved a stronger result, namely that the cube maximizes $\sigma_1$ among rectangular boxes of given surface area.  

\subsubsection*{Length-scaled Robin parameter.}
An upper bound on the second Robin eigenvalue with length-scaled Robin parameter was proved by Freitas and Laugesen \cite[Theorem B]{FL18b} for simply connected planar domains, namely that $\lambda_2(\Omega;\alpha/L) A$ is maximal for the disk provided $\alpha \in [-2\pi,2\pi]$. (It is not known to what extent that interval of $\alpha$-values can be enlarged.) Thanks to the isoperimetric inequality this result implies, for simply connected domains with $\alpha \in [-R^{-1},0]$, the inequality from \cite{FL18a} that $\lambda_2(\Omega;\alpha)$ is maximal for the disk of the same area. It also implies Weinstock's result \cite{We54} that the first nonzero Steklov eigenvalue of a simply connected domain is maximal for the disk, under perimeter normalization, as explained in \cite{FL18b}. 

It is an open problem to generalize this length-scaled upper bound on the second eigenvalue to higher dimensions. Convexity might provide a reasonable substitute for simply connectedness. Write  $\B$ for the unit ball. The next conjecture was raised by Freitas and Laugesen \cite{FL18b}. 
\begin{conjx}[Maximizing the second Robin eigenvalue on convex domains, with surface area and volume scaling \protect{\cite[Conjecture 2]{FL18b}}]\label{convexconj}
The ball maximizes the scale invariant quantity $\lambda_2(\Omega;\alpha V^{1-2/n}/S) V^{2/n}$
among all convex bounded domains $\Omega \subset \Rn$, for $\alpha$ such that $-1 \leq \alpha V(\B)^{1-2/n}/S(\B) \leq 0$. Hence the ball also maximizes $\lambda_2(\Omega;\alpha/S^{1/(n-1)}) V^{2/n}$ for a suitable range of $\alpha$, where now the Robin parameter is scaled purely by perimeter. 
\end{conjx}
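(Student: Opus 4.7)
The plan is to extend the Szeg\H{o}--Weinberger test function argument from \cite{FL18a}, whose volume-scaled version establishes $\lambda_2(\Omega;\beta)\le\lambda_2(\B;\beta)$ under $V(\Omega)=V(\B)$. By scale invariance I may rescale so that $V(\Omega)=V(\B)$; setting $\beta=\alpha V(\B)^{1-2/n}/S(\B)\in[-1,0]$, the present conjecture reduces to
\[
\lambda_2\bigl(\Omega;\beta S(\B)/S(\Omega)\bigr)\;\le\;\lambda_2(\B;\beta).
\]
Because $\beta S(\B)/S(\Omega)\ge\beta$ by the isoperimetric inequality, and $\lambda_2$ is increasing in the Robin parameter, this is strictly stronger than the volume-scaled bound of \cite{FL18a}. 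The extra strength must be supplied by the convexity hypothesis on $\Omega$.

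Let $G(r)$ be the radial profile of the second Robin eigenfunction on $\B$ with parameter $\beta$, extended by the constant $G(1)$ for $r>1$. Pick $x_0\in\Omega$ by a Brouwer fixed-point argument so that the $n$ trial functions $\phi_i(x)=G(|x-x_0|)(x_i-x_0^i)/|x-x_0|$ are $L^2(\Omega)$-orthogonal to the first Robin eigenfunction of $\Omega$. With $r=|x-x_0|$, $Q(r)=G'(r)^2+(n-1)G(r)^2/r^2$, and $\lambda_\ast=\lambda_2(\B;\beta)$, summing \eqref{minimax} over $i$ and invoking the ball identity $\int_\B(Q-\lambda_\ast G^2)\,dx=|\beta|G(1)^2 S(\B)$ reduces the target inequality to
\[
\int_\Omega\bigl(Q-\lambda_\ast G^2\bigr)\,dx\;\le\;\frac{|\beta|S(\B)}{S(\Omega)}\int_{\partial\Omega}G(r)^2\,dS.
\]
The interior integrand $Q-\lambda_\ast G^2$ should be radially nonincreasing in the range $\beta\in[-1,0]$, as may be checked from the ODE for $G$, and the bathtub principle together with $V(\Omega)=V(\B)$ then bounds the left-hand side by $|\beta|G(1)^2 S(\B)$.

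What remains is the convex-geometric lower bound
\[
\frac{1}{S(\Omega)}\int_{\partial\Omega}G(|x-x_0|)^2\,dS\;\ge\;G(1)^2,
\]
and this is the main obstacle. Since $G(r)\le G(1)$ pointwise, and the normalization $V(\Omega)=V(\B)$ precludes $B(x_0,1)\subset\Omega$ unless $\Omega=B(x_0,1)$, this inequality \emph{as stated} cannot hold for arbitrary $\Omega$. The deficit must be absorbed either by sharpening the interior symmetrization under convexity to extract a gain proportional to the isoperimetric excess $S(\Omega)-S(\B)$, or by replacing $G$ with the eigenfunction of a larger ball calibrated to the circumradius of $\Omega$, so that $\partial\Omega$ lies within the plateau region of the modified profile. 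I expect this step, where convexity would have to enter essentially, to be the principal difficulty --- analogous to the role played by the Riemann mapping theorem in the planar proof of \cite{FL18b}, which has no higher-dimensional counterpart.

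Granted the first statement, the second (Robin parameter scaled purely by perimeter) follows by monotonicity. Normalizing $V(\Omega)=V(\B)$ and writing $\beta_0=\alpha/S(\B)^{1/(n-1)}$, the Robin parameter $\alpha/S(\Omega)^{1/(n-1)}$ equals $\beta_0(S(\B)/S(\Omega))^{1/(n-1)}$. For $\beta_0\in[-1,0]$ and $S(\B)/S(\Omega)\in(0,1]$, the exponent $1/(n-1)\le 1$ gives $(S(\B)/S(\Omega))^{1/(n-1)}\ge S(\B)/S(\Omega)$, hence $\beta_0(S(\B)/S(\Omega))^{1/(n-1)}\le\beta_0 S(\B)/S(\Omega)$. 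Monotonicity of $\lambda_2$ in the Robin parameter combined with the first assertion (applied with $\beta=\beta_0$) then yields the desired bound on the range $\alpha\in[-S(\B)^{1/(n-1)},0]$.
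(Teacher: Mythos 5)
This statement is labelled as a conjecture in the paper (it is Conjecture~2 of Freitas--Laugesen, restated here), and the surrounding text explicitly calls the higher-dimensional length-scaled bound an open problem; the paper contains no proof for you to be measured against. Your proposal, as you yourself acknowledge, does not close the problem: the argument runs aground exactly where you say it does. After centering the Weinberger-type trial functions and using the bathtub bound on the interior term, the remaining inequality
\[
\frac{1}{S(\Omega)}\int_{\partial\Omega}G(|x-x_0|)^2\,dS\;\ge\;G(1)^2
\]
is genuinely false for any convex $\Omega$ with $V(\Omega)=V(\B)$ other than the ball, since $G\le G(1)$ with equality only at $r\ge 1$ and such an $\Omega$ cannot contain $B(x_0,1)$. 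The two repairs you float --- extracting a gain proportional to the isoperimetric deficit from the interior symmetrization, or recalibrating $G$ to the circumradius --- are exactly where convexity would have to enter, and neither is carried out; the second in particular changes $\lambda_\ast$ and destroys the ball identity you rely on. A secondary unverified point is the asserted radial monotonicity of $Q-\lambda_\ast G^2$ for the profile extended by a constant, which in the volume-normalized setting of \cite{FL18a} requires a nontrivial lemma and should not be waved through with ``as may be checked from the ODE.'' So the proposal is an honest reduction of the conjecture to a sharper, still-open geometric inequality, not a proof.

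One part of your write-up is correct and worth keeping: granted the first assertion, your derivation of the second (Robin parameter scaled purely by $S^{1/(n-1)}$) is sound. Normalizing $V(\Omega)=V(\B)$, the comparison $t^{1/(n-1)}\ge t$ for $t=S(\B)/S(\Omega)\in(0,1]$ together with $\beta_0\le 0$ and monotonicity of $\lambda_2$ in the Robin parameter gives
\[
\lambda_2\bigl(\Omega;\alpha/S(\Omega)^{1/(n-1)}\bigr)\le\lambda_2\bigl(\Omega;\beta_0 S(\B)/S(\Omega)\bigr)\le\lambda_2(\B;\beta_0),
\]
which is precisely the kind of rescaling-plus-monotonicity reduction the paper uses after Theorem~\ref{lambda1L} and Theorem~\ref{lambda2L}.
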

Taking $n=2$ reduces the conjecture back to maximizing $\lambda_2(\Omega;\alpha/L)A$. 

For rectangles, we will develop a length-scaled upper bound on the second eigenvalue that is analogous to \autoref{convexconj} and has a sharp interval of $\alpha$-values. Let $\alpha_+ \simeq 33.2054$ and $\alpha_- \simeq -9.3885$ be the numbers defined later by \eqref{alphaplus} and \eqref{alphaneg}.
\begin{theorem}[Maximizing the second Robin eigenvalue on rectangles, with length scaling]\label{lambda2L}
If $\alpha \in [\alpha_-,\alpha_+]$ then $\lambda_2(\mathcal{R};\alpha/L)A$ is maximal for the square and only the square, among rectangles $\mathcal{R}$. 

If $\alpha \notin [\alpha_-,\alpha_+]$ then the degenerate rectangle is asymptotically maximal among all rectangles,  meaning $\lambda_2(\mathcal{R};\alpha/L)A < \alpha$ with equality in the limit as $\mathcal{R}$ degenerates to an interval.
\end{theorem}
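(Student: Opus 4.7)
The plan is to exploit separation of variables to reduce the problem to analyzing a single-variable function, then identify the transition values $\alpha_\pm$ as roots of a transcendental equation. By \autoref{identifying}, the Robin spectrum of $\mathcal{R}=(0,a)\times(0,b)$ is generated by sums of one-dimensional Robin eigenvalues $\mu_k(\ell;\gamma)$ of intervals. Taking without loss of generality $a\leq b$, one has $\lambda_2(\mathcal{R};\gamma)=\mu_1(a;\gamma)+\mu_2(b;\gamma)$ (the alternative $\mu_2(a;\gamma)+\mu_1(b;\gamma)$ is larger because the 1D spectral gap decreases in interval length). Setting $\phi_k(\beta):=\mu_k(1;\beta)$, so that $\mu_k(\ell;\gamma)\ell^2=\phi_k(\gamma\ell)$ by rescaling, normalizing $A=1$, and parameterizing shape by $s:=a/(a+b)\in(0,1/2]$, one computes $\gamma a=s\alpha/2$, $\gamma b=(1-s)\alpha/2$, $A/a^2=(1-s)/s$, $A/b^2=s/(1-s)$, giving
\begin{equation*}
F(s;\alpha)\;:=\;\lambda_2(\mathcal{R};\alpha/L)\,A\;=\;\frac{1-s}{s}\,\phi_1\!\bigl(\tfrac{s\alpha}{2}\bigr)+\frac{s}{1-s}\,\phi_2\!\bigl(\tfrac{(1-s)\alpha}{2}\bigr).
\end{equation*}
The square corresponds to $s=1/2$ and the degenerate rectangle to $s\to 0^+$.

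Next I would read off the boundary behavior of $F(\cdot;\alpha)$. The symmetric-mode equation $\xi\tan(\xi/2)=\beta$ yields $\phi_1(\beta)=2\beta+O(\beta^2)$ near $\beta=0$, so the first summand tends to $\alpha$ as $s\to 0^+$ while the second vanishes, giving $F(0^+;\alpha)=\alpha$ and confirming the claimed asymptotic for degenerate rectangles. At the square, $F(1/2;\alpha)=\phi_1(\alpha/4)+\phi_2(\alpha/4)$, and the difference $F(1/2;\alpha)-\alpha$ equals $\pi^2>0$ at $\alpha=0$ but tends to $-\infty$ as $|\alpha|\to\infty$ (since $\phi_k$ is bounded above by a Dirichlet eigenvalue as $\alpha\to+\infty$, while $\phi_1(\beta)\sim-\beta^2$ as $\beta\to-\infty$ by the Robin asymptotic). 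By continuity this difference has exactly two real zeros, which by definition are $\alpha_\pm$, matching \eqref{alphaplus}--\eqref{alphaneg}; moreover $F(1/2;\alpha)>\alpha$ on $(\alpha_-,\alpha_+)$ and $F(1/2;\alpha)<\alpha$ outside.

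The decisive step is to upgrade these two endpoint computations into the pointwise shape inequality $F(s;\alpha)\leq\max\{\alpha,\,F(1/2;\alpha)\}$ on $(0,1/2]$, with strict inequality for $s\in(0,1/2)$. Granted this, both parts of the theorem follow: for $\alpha\in(\alpha_-,\alpha_+)$ we have $F(1/2;\alpha)>\alpha$, so the square is the unique maximizer of $\lambda_2 A$; for $\alpha\notin[\alpha_-,\alpha_+]$ we have $F(1/2;\alpha)<\alpha$, so every rectangle satisfies $\lambda_2 A<\alpha$ with $\alpha$ attained only in the degenerate limit; the boundary cases $\alpha=\alpha_\pm$ follow by continuity. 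My attack on the shape inequality would be to differentiate $F$ in $s$, eliminate $\phi_k'$ by implicit differentiation of the transcendental equations for $\phi_1,\phi_2$, and classify the sign of $\partial_s F$ on $(0,1/2)$, with concavity of each $\phi_k$ in its argument (a one-dimensional analogue of \autoref{secondeigconcave}) serving as the main analytic tool.

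The main obstacle is precisely this shape-of-$F$ step. The function $F(\cdot;\alpha)$ is \emph{not} globally monotone on $(0,1/2]$: for $\alpha$ large and positive the Dirichlet approximation already has an interior critical point, so one cannot simply ``compare endpoints''. Additionally, $\phi_1$ and $\phi_2$ are defined piecewise through distinct transcendental equations ($\tan$-type when the 1D eigenvalue is positive and $\tanh$-type when it is negative), so the sign analysis of $\partial_s F$ must be split according to which regime the arguments $s\alpha/2$ and $(1-s)\alpha/2$ belong to. I expect the bulk of the technical work to lie in this piecewise analysis, after which the shape optimization conclusions drop out cleanly from the one-parameter framework above.
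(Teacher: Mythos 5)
Your setup is correct and matches the paper's: reduction to a one-parameter family of rectangles, the formula $F(s;\alpha)$ via \autoref{secondeigenbox}, the degenerate limit $F(0^+;\alpha)=\alpha$, and the identification of $\alpha_\pm$ as the two solutions of $F(1/2;\alpha)=\alpha$. (One small repair there: continuity alone does not give ``exactly two'' zeros of $F(1/2;\alpha)-\alpha$; the paper gets this, and the strict inequalities $F(1/2;\alpha)\gtrless\alpha$ on either side of $\alpha_\pm$, from strict concavity of $\lambda_2(\mathcal{R}(1/2);\cdot)$ in $\alpha$, i.e.\ \autoref{secondeigconcave}, which rests on \autoref{concaveinverse}.)

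The genuine gap is the step you yourself flag as decisive: the pointwise inequality $F(s;\alpha)\leq\max\{\alpha,F(1/2;\alpha)\}$ for $s\in(0,1/2]$. You describe a strategy (differentiate in $s$, eliminate $\phi_k'$ implicitly, classify signs) but do not carry it out, and this is where essentially all of the paper's work lies. The paper's route is to prove convexity/monotonicity of the two summands $p\mapsto\lambda_1((0,p);\alpha)p(1-p)$ and $p\mapsto\lambda_2((0,1-p);\alpha)p(1-p)$ separately, via \autoref{convexityyy}, \autoref{concavityH1} and \autoref{concavityhtwo}. For $\alpha>0$ both summands are strictly convex in $p$, so $Q(p)$ is convex and the maximum \emph{is} decided by comparing the two endpoints --- contrary to your worry, the interior critical point of a convex function is a minimum, not an obstruction. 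The hard regime is $\alpha<0$, where convexity fails near $p=0$ and the argument must split the interval at the turning points $y_1(|\alpha|/4)$ and $1-y_2(|\alpha|/4)$ of the first and second summands respectively; the key structural fact making this work is $y_1(c)+y_2(c)<1$ (end of \autoref{concavityhtwo}, resting on the inequality $h_1(f_1^{-1}(w))+h_2(f_2^{-1}(w))<1/w$ from \autoref{specific}), which guarantees that the region where the first summand decreases is contained in the region where the second summand also decreases, so $Q$ is decreasing there and convex thereafter. Without this, or some substitute for it, the sign classification of $\partial_sF$ you propose will not close, and the paper in fact needs five separate $\alpha$-ranges ($[-4,0)$, $[-8,-4)$, $[-12,-8)$, $(-\infty,-12)$, plus $\alpha>0$) to complete it. As it stands your submission is a correct reduction plus a program, not a proof.
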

One would like to generalize to boxes in higher dimensions, but I have not succeeded in doing so. 

\autoref{lambda2L} implies the $2$-dimensional case of \autoref{lambda2higherdim} with the Robin parameter replaced by $\alpha/4\sqrt{A}$, when $\alpha \in [\alpha_-,0]$: one simply argues like we did earlier for the first eigenvalue after the statement of \autoref{lambda1L}, using that $\lambda_2$ is increasing with respect to the Robin parameter and that $\alpha/4\sqrt{A} \leq \alpha/L$ by the rectangular isoperimetric inequality, when $\alpha \leq 0$.

A corollary of \autoref{convexconj} would be a result proved already by Bucur \emph{et al.}\ \cite[Theorem 3.1]{BFNT17} that among convex domains, the ball maximizes the scale invariant quantities $\sigma_1 S/V^{1-2/n}$ and $\sigma_1 S^{1/(n-1)}$. Here $\sigma_1$ is the first nonzero Steklov eigenvalue; recall the Steklov eigenfunctions are harmonic and satisfy $\partial u/\partial \nu = \sigma u$ on the boundary, with eigenvalues $0=\sigma_0 < \sigma_1 \leq \sigma_2 \leq \dots$. 

Analogously, \autoref{lambda2L} has as a corollary that the first nonzero Steklov eigenvalue of a rectangle is maximal for the square, under perimeter normalization. 
\begin{corollary}[Maximizing the first nonzero Steklov eigenvalue on rectangles, with length normalization] \label{steklov}
The scale invariant quantity $\sigma_1 L$ is maximal among rectangles for the square and only the square. 
\end{corollary}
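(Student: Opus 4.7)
The plan is to reduce \autoref{steklov} to \autoref{lambda2L} by exploiting the well-known characterization of $\sigma_1$ as the magnitude of the horizontal intercept of the second Robin eigenvalue curve. Explicitly, for any bounded Lipschitz domain $\Omega$ the eigenvalue $\lambda_2(\Omega;\alpha)$ is continuous and strictly increasing in $\alpha$, and it satisfies $\lambda_2(\Omega;-\sigma_1(\Omega))=0$; hence $\lambda_2(\Omega;\beta)\le 0$ if and only if $\beta\le -\sigma_1(\Omega)$.

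First I would normalize area to $1$, so that $A(\mathcal{R})=A(\mathcal{S})=1$ and $L(\mathcal{S})=4$, where $\mathcal{S}$ denotes the unit square. Choose
\[
\alpha_0 \;=\; -\sigma_1(\mathcal{S})\,L(\mathcal{S}) \;=\; -4\sigma_1(\mathcal{S}),
\]
and assume for the moment that $\alpha_0 \in [\alpha_-,\alpha_+]$. Then \autoref{lambda2L}, applied at $\alpha=\alpha_0$, yields
\[
\lambda_2\bigl(\mathcal{R};\alpha_0/L(\mathcal{R})\bigr)
\;\le\;
\lambda_2\bigl(\mathcal{S};\alpha_0/L(\mathcal{S})\bigr)
\;=\;
\lambda_2\bigl(\mathcal{S};-\sigma_1(\mathcal{S})\bigr)
\;=\;0,
\]
with strict inequality unless $\mathcal{R}$ is a square. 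Monotonicity of $\lambda_2(\mathcal{R};\cdot)$ together with the intercept characterization then forces $\alpha_0/L(\mathcal{R}) \le -\sigma_1(\mathcal{R})$, equivalently
\[
\sigma_1(\mathcal{R})\,L(\mathcal{R}) \;\le\; -\alpha_0 \;=\; \sigma_1(\mathcal{S})\,L(\mathcal{S}),
\]
with strict inequality unless $\mathcal{R}=\mathcal{S}$. Because the quantity $\sigma_1 L$ is scale invariant (Steklov eigenvalues scale inversely with length), this conclusion transfers to arbitrary rectangles, giving the corollary.

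The only real obstacle is the numerical verification that $\alpha_0 \in [\alpha_-,\alpha_+]$. Since $\alpha_+ \simeq 33.2 > 0 > \alpha_0$, what needs checking is
\[
|\alpha_0| \;=\; 4\,\sigma_1(\mathcal{S}) \;\le\; |\alpha_-| \;\simeq\; 9.39 ,
\]
equivalently $\sigma_1(\mathcal{S}) \lesssim 2.35$. The first nonzero Steklov eigenvalue of the unit square is the smallest positive root of a transcendental equation of the type collected in \autoref{identifying} (it can also be read off from the $\lambda_2$-horizontal intercept for the square plotted in \autoref{firsttwo}), and a direct numerical evaluation gives $\sigma_1(\mathcal{S}) \approx 1.28$, comfortably within the required bound. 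If one prefers to avoid numerics entirely, a crude analytical upper bound on $\sigma_1(\mathcal{S})$ via a trial function in the Steklov Rayleigh quotient would suffice; but given that \autoref{lambda2L} itself is proved via the transcendental identification of eigenvalues, recording the numerical inequality here is the most efficient route.
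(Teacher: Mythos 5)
Your proposal is correct and follows essentially the same route as the paper: normalize the rectangle, apply \autoref{lambda2L} at $\alpha_0=-\sigma_1(\mathcal{S})L(\mathcal{S})$ to get $\lambda_2(\mathcal{R};\alpha_0/L(\mathcal{R}))\le 0$ with equality only for the square, and then use strict monotonicity of $\lambda_2$ in $\alpha$ together with the horizontal-intercept characterization of $\sigma_1$ (the paper normalizes to area $4$ and uses $\alpha=8\alpha_0\simeq -5.5$ with $\alpha_0\simeq -0.68825$ from its \autoref{squareexample}). One tiny slip: the unit square actually has $\sigma_1(\mathcal{S})\simeq 1.377$ rather than $1.28$ (rescaling $|\alpha_0|\simeq 0.68825$ from the side-$2$ square), but since $4\cdot 1.377\simeq 5.5 < |\alpha_-|\simeq 9.39$ the required membership $\alpha_0\in[\alpha_-,\alpha_+]$ still holds and the argument is unaffected.
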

\autoref{steklov} is not new. It is due to Girouard \emph{et al.}\ \cite[Theorem 1.6]{GLPS17}, who proved the result and its extension to all dimensions, showing that that the cube maximizes $\sigma_1$ among rectangular boxes of given surface area. See also Tan \cite{T17} for the $2$-dimensional case of rectangles. What is new is our derivation of the Steklov corollary from a family of Robin results.  

\begin{remark}
For simply connected domains we recalled above that $\lambda_2(\Omega;\alpha/L)A$ is maximal for the disk when $-2\pi \leq \alpha \leq 2\pi$. Perhaps at some $\alpha$-value beyond $2\pi$ another domain  takes over as maximizer, and so on again and again as $\alpha$ continues to increase toward infinity? In the class of rectangles, at least, such ``domain cascading'' does not occur. Instead, \autoref{lambda2L} establishes a sharp transition between the square and the degenerate rectangle precisely at the Robin parameters $\alpha_-$ and $\alpha_+$. 
\end{remark}

\section*{Spectral gap $\lambda_2-\lambda_1$}

The Neumann and Dirichlet spectral gaps are minimal for the line segment among all convex domains in $\Rn$ of given diameter, by work of Payne--Weinberger \cite{PW60} and Andrews--Clutterbuck \cite{AC11}, respectively. For the Robin gap, an analogous conjecture has been stated by Andrews, Clutterbuck and Hauer \cite{ACH18}: 
\begin{conjx}[Minimizing the spectral gap on convex domains, under diameter normalization \protect{\cite[Sections 2 and 10]{ACH18}}]\label{robingap} Fix $\alpha > 0$ and the dimension $n \geq 2$. Among convex bounded domains $\Omega \subset \Rn$ of given diameter $D$, the Robin spectral gap is minimal for the degenerate box (line segment) of diameter $D$: 
\[
\lambda_2(\Omega;\alpha)-\lambda_1(\Omega;\alpha) > \lambda_2((0,D);\alpha)-\lambda_1((0,D);\alpha) .
\]
\end{conjx}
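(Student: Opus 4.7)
The plan is to adapt the modulus-of-continuity / two-point maximum principle approach of Andrews and Clutterbuck, which resolved the Dirichlet gap conjecture, to the Robin setting with $\alpha>0$. A key ingredient, namely log-concavity of the first Robin eigenfunction on convex domains, is already available thanks to Andrews, Clutterbuck and Hauer \cite{ACH18}, so the foundation is in place. The one-dimensional model problem is the Robin Laplacian on $(0,D)$, whose first two eigenfunctions $\psi_1$ (even about the midpoint) and $\psi_2$ (odd) are explicit via standard transcendental identities, and whose ratio $\bar{\phi} = \psi_2/\psi_1$ serves as a comparison profile.

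First I would pass from eigenfunctions to their ratio. Let $u_1, u_2$ be the first two Robin eigenfunctions on $\Omega$. Since $u_1 > 0$ on $\overline{\Omega}$ by positivity of the ground state together with the Hopf lemma, the quotient $\phi = u_2/u_1$ is smooth on $\overline{\Omega}$ and satisfies
\[
-\Delta \phi - 2\,\nabla \log u_1 \cdot \nabla \phi = (\lambda_2 - \lambda_1)\,\phi \qquad \text{in } \Omega.
\]
Because $u_1$ and $u_2$ share the same Robin condition, a direct computation yields the homogeneous Neumann condition $\partial_\nu \phi = 0$ on $\partial\Omega$. The analogous reduction for the 1D model gives an ordinary differential equation for $\bar{\phi}$.

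Next I would run a two-point maximum principle on $\overline{\Omega}\times\overline{\Omega}$ for the coupling defect
\[
Z(x,y) = \phi(y) - \phi(x) - 2\,\bar{\phi}\!\left(\tfrac{|x-y|}{2}\right),
\]
with $\phi$ normalized so that $\max\phi = -\min\phi$. If $Z \leq 0$ can be established, then the Robin gap on $\Omega$ must strictly exceed the Robin gap on $(0,D)$, with equality only in the degenerate limit as $\Omega$ flattens onto an interval. The interior analysis at a putative maximum $(x_0,y_0)$ uses convexity of $\Omega$ (so that the chord joining $x_0$ and $y_0$ lies inside), together with log-concavity of $u_1$ from \cite{ACH18} to control the drift term $\nabla \log u_1$ when it is pushed through the coupling.

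The main obstacle will be the boundary analysis. In the Dirichlet case, $u_1$ vanishes on $\partial\Omega$ and the drift term $\nabla \log u_1$ blows up in a controlled way that forces the two-point maximum into the interior. In the Robin case $u_1$ is strictly positive on $\overline{\Omega}$, so the drift is bounded, but the boundary condition $\partial_\nu \log u_1 = -\alpha$ now feeds directly into any Hopf-type analysis at a boundary extremum of $Z$. At a boundary maximum the outward normal derivative of $Z$ must be nonnegative; the vanishing of the Neumann derivative of $\phi$ combined with the convexity sign of $(x_0-y_0)\cdot\nu(x_0)$ and the positivity of $\bar{\phi}'$ forces $\partial_\nu Z \leq 0$, and one would hope that strictness excludes the boundary case. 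Handling the degenerate configurations --- chord tangent to $\partial\Omega$, both points simultaneously on $\partial\Omega$, and corner points on Lipschitz convex domains --- is where the real work lies. If the boundary analysis proves intractable in full generality, I would first attack smooth convex domains, or convex domains with a hyperplane of symmetry (where $u_2$ is antisymmetric across the plane, so $\lambda_2$ equals the first mixed Dirichlet--Robin eigenvalue of the half-domain, and the gap comparison reduces to a Faber--Krahn-type statement), as stepping stones toward the general convex case.
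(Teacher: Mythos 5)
This statement is \autoref{robingap}, which the paper presents as an open \emph{conjecture} of Andrews, Clutterbuck and Hauer --- the paper offers no proof of it, and only establishes the special case of rectangular boxes (\autoref{gapD}) by completely different, explicit one-dimensional means. So there is no ``paper proof'' to compare against; what matters is whether your plan could close the conjecture, and it cannot as written.

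The fatal gap is your premise that ``log-concavity of the first Robin eigenfunction on convex domains is already available thanks to Andrews, Clutterbuck and Hauer.'' The cited paper \cite{ACH18} proves the opposite: its title is \emph{Non-concavity of Robin eigenfunctions}, and (as recorded in the discussion following \autoref{robingap} in the present paper) log-concavity of the Robin ground state is known to \emph{fail} for some convex domains by \cite[Theorem 1.2]{ACH18}. Since your two-point maximum principle needs log-concavity of $u_1$ to control the drift term $\nabla \log u_1$ at the interior coupling step, the argument collapses exactly where the Dirichlet proof leaned on Brascamp--Lieb log-concavity. This is precisely why the conjecture is open. Moreover, even granting log-concavity as a hypothesis, the partial result \cite[Theorem 2.1]{ACH18} that your scheme would reproduce only yields the \emph{Neumann} gap $\pi^2/D^2$ on the right-hand side, which is strictly weaker than the conjectured Robin gap of $(0,D)$ (the interval's gap is strictly increasing in $\alpha$, by Smits and by \autoref{gapmonot}); upgrading the one-dimensional comparison profile from the Neumann to the Robin model is a second, independent difficulty your sketch does not address. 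Your fallback of symmetric convex domains, where $\lambda_2$ becomes a mixed Dirichlet--Robin eigenvalue of a half-domain, is a reasonable stepping stone but is not developed far enough to constitute a proof of any case beyond what is already known.
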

A partial result \cite[Theorem 2.1]{ACH18} says that the inequality holds with $\alpha$ on the right side replaced by $0$, that is, replacing the right side by the Neumann gap $\pi^2/D^2$; and even this result assumes the Robin ground state on $\Omega$ is log-concave, which is known to fail for some convex domains \cite[Theorem 1.2]{ACH18}. 

For rectangular boxes, we can prove the Robin gap conjecture for all $\alpha \in \R$.
\begin{theorem}[Minimizing the spectral gap on rectangular boxes, under diameter normalization]\label{gapD} Fix $\alpha \in \R$ and the dimension $n \geq 2$. Among rectangular boxes $\mathcal{B}$ of given diameter $D$, the Robin spectral gap is minimal for the degenerate box (line segment) of diameter $D$: 
\[
\lambda_2(\mathcal{B};\alpha)-\lambda_1(\mathcal{B};\alpha) > \lambda_2((0,D);\alpha)-\lambda_1((0,D);\alpha) .
\]
\end{theorem}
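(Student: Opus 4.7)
The plan is to exploit the tensor product structure: separation of variables on $\mathcal{B} = \prod_{i=1}^n (0,s_i)$ gives every eigenvalue as a sum $\lambda_{\vec{k}}(\mathcal{B};\alpha) = \sum_i \mu_{k_i}(s_i;\alpha)$ of $1$D Robin eigenvalues on the factor intervals. Consequently $\lambda_1(\mathcal{B};\alpha) = \sum_i \mu_1(s_i;\alpha)$ and $\lambda_2(\mathcal{B};\alpha) = \lambda_1(\mathcal{B};\alpha) + \min_i g(s_i;\alpha)$, where $g(s;\alpha) := \mu_2(s;\alpha) - \mu_1(s;\alpha)$ denotes the $1$D Robin spectral gap on $(0,s)$. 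Hence the spectral gap of the box equals $\min_{1\le i\le n} g(s_i;\alpha)$. Since $n\ge 2$ and all $s_i>0$, the Pythagorean identity $D = \sqrt{\sum s_j^2}$ forces $s_i<D$ strictly for every $i$. So the theorem reduces to showing that $s \mapsto g(s;\alpha)$ is strictly decreasing on $(0,\infty)$ for each $\alpha\in\R$: then $\min_i g(s_i;\alpha) > g(D;\alpha) = \lambda_2((0,D);\alpha) - \lambda_1((0,D);\alpha)$.

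To establish this $1$D monotonicity, I would use reflection symmetry about the midpoint of $(0,s)$: the first Robin eigenfunction is even and the second is odd, so $\mu_1(s;\alpha) = \mu_1^{NR}(0,s/2;\alpha)$ and $\mu_2(s;\alpha) = \mu_1^{DR}(0,s/2;\alpha)$, with $N$ (resp.\ $D$) meaning Neumann (resp.\ Dirichlet) at $x=0$ and $R$ meaning Robin with parameter $\alpha$ at $x=s/2$. Rescaling to the unit interval gives $g(s;\alpha) = (2/s)^2\bigl[\delta(\alpha s/2) - \nu(\alpha s/2)\bigr]$, where $\delta(\gamma)$ and $\nu(\gamma)$ are the first DR and NR eigenvalues on $(0,1)$. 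Differentiating in $s$ and applying Hellmann--Feynman ($\delta'(\gamma) = u_D(1;\gamma)^2$, $\nu'(\gamma)=u_N(1;\gamma)^2$) reduces strict decreasingness of $g$ to the single inequality
\[
\gamma\bigl[u_D(1;\gamma)^2 - u_N(1;\gamma)^2\bigr] < 2\bigl[\delta(\gamma) - \nu(\gamma)\bigr] \qquad \text{for all } \gamma\in\R.
\]

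Proving this inequality uniformly in $\gamma$ is the main obstacle, because the eigenvalues switch between trigonometric and hyperbolic form as $\gamma$ crosses the Steklov thresholds $\nu(0)=0$ and $\delta(-1)=0$. I would treat the three regimes separately. When $\gamma<-1$ (both $\delta,\nu<0$), taking $u_D = A\sinh(k_D x)$, $u_N = B\cosh(k_N x)$ with $k_D\coth k_D = -\gamma = k_N\tanh k_N$, substituting the $L^2$-normalizations and simplifying via the transcendental relations collapses the inequality to $\tfrac{k_D^3}{\sinh 2k_D - 2k_D} + \tfrac{k_N^3}{\sinh 2k_N + 2k_N} > 0$, which is immediate. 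The mixed regime $-1<\gamma<0$ (where $\delta>0>\nu$) reduces similarly to an obviously positive sum. The nontrivial case is $\gamma>0$: here $u_D = A\sin(k_D x)$ with $k_D\cot k_D = -\gamma$ (so $k_D\in(\pi/2,\pi)$) and $u_N = B\cos(k_N x)$ with $k_N\tan k_N = \gamma$ (so $k_N\in(0,\pi/2)$), and the inequality collapses after algebra to
\[
\phi(k_D) > \psi(k_N), \qquad \phi(k) := \frac{k^3}{2k-\sin 2k}, \qquad \psi(k) := \frac{k^3}{2k+\sin 2k}.
\]
A short calculation gives $\phi'(k) = k^2\bigl(2k(2+\cos 2k) - 3\sin 2k\bigr)/(2k-\sin 2k)^2 \geq 2k^3/(2k-\sin 2k)^2 > 0$ on $(\pi/2,\pi)$ (since there $\cos 2k\ge -1$ and $-\sin 2k\ge 0$), and similarly $\psi'(k)$ has numerator $k^2\bigl(2k(2-\cos 2k) + 3\sin 2k\bigr) \ge 2k^3 > 0$ on $(0,\pi/2)$; so $\phi$ is strictly increasing on $(\pi/2,\pi)$ and $\psi$ on $(0,\pi/2)$. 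Since $\phi(\pi/2) = \psi(\pi/2) = \pi^2/8$, one concludes $\phi(k_D) > \pi^2/8 > \psi(k_N)$, which closes the nontrivial case.

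Combining the three cases yields strict $1$D monotonicity of $s \mapsto g(s;\alpha)$ on $(0,\infty)$ for every $\alpha\in\R$, and hence $\min_i g(s_i;\alpha) > g(D;\alpha)$, which is the theorem. The heart of the argument is the $\gamma>0$ case of the key Hellmann--Feynman inequality; the case-splitting at the Steklov thresholds is bookkeeping, as is the final assembly from $1$D to boxes.
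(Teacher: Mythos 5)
Your proposal is correct, and its skeleton coincides with the paper's: reduce the box gap to the one-dimensional gap of a single edge (your identity $(\lambda_2-\lambda_1)(\mathcal{B};\alpha)=\min_i g(s_i;\alpha)$ is the paper's Corollary~\ref{boxgap} in disguise, since the minimum sits at the longest edge), note that every edge satisfies $s_i<D$ because $D^2=\sum_j s_j^2$ with $n\ge 2$, and invoke strict monotonicity of the interval gap in the length. Where you genuinely diverge is in the proof of that key one-dimensional lemma. The paper splits by sign of $\alpha$: for $\alpha<0$ it gets the result for free because $\lambda_1(\mathcal{I}(t);\alpha)$ increases and $\lambda_2(\mathcal{I}(t);\alpha)$ decreases in $t$ (Proposition~\ref{1dimneg}), and for $\alpha>0$ it factors the gap as $\big(\sqrt{\lambda_2}+\sqrt{\lambda_1}\big)\big(\sqrt{\lambda_2}-\sqrt{\lambda_1}\big)$ and uses $\go^\prime>\gt^\prime$ from Lemma~\ref{derivcomparison}, which rests on monotonicity of $x/(1+|\sinc x|)$. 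You instead rescale to the unit interval, differentiate via Hellmann--Feynman in the boundary parameter $\gamma=\alpha s/2$, and reduce all sign regimes to one master inequality $\gamma[u_D(1)^2-u_N(1)^2]<2[\delta(\gamma)-\nu(\gamma)]$; I checked the algebra in each regime and your reductions (to an obviously positive hyperbolic sum for $\gamma<0$, and to $\phi(k_D)>\pi^2/8>\psi(k_N)$ for $\gamma>0$) are correct, with the amusing footnote that your $\phi,\psi$ are, up to a factor of $x/4$, exactly the two sides of the paper's $\sinc$ inequality. What your route buys is a uniform treatment of all $\alpha\in\R$ from a single variational formula, and a decoupled trigonometric inequality (you never need the relation $k_N\tan k_N=-k_D\cot k_D$); what it costs is that you re-derive from scratch the one-dimensional monotonicity that the paper has already packaged as Propositions~\ref{1dimpos}--\ref{1dimneg}, plus a small amount of care at the thresholds $\gamma\in\{-1,0\}$, which is handled by analyticity of the simple eigenvalues in $\gamma$ or by direct check.
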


Maximizing the Robin gap is generally not possible among convex domains of given diameter, perimeter, or area, since the Dirichlet spectral gap can be arbitrarily large by an observation of Smits \cite[Theorem 5 and discussion]{Sm96}. He worked with a degenerating family of sectors. A degenerating family of acute isosceles triangles would presumably behave the same way. 

Among rectangular boxes, though, the spectral gap is not only bounded above, it is maximal at the cube for each value of the Robin parameter. 
\begin{theorem}[Maximizing the spectral gap on rectangular boxes]\label{gapSV} Fix $\alpha \in \R$ and the dimension $n \geq 2$. Among rectangular boxes $\mathcal{B}$ of given diameter (or given surface area, or given volume), the Robin spectral gap $(\lambda_2-\lambda_1)(\mathcal{B};\alpha)$ is maximal for the cube and only the cube.  
\end{theorem}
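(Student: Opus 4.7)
The plan is to reduce the $n$-dimensional box statement to a one-dimensional monotonicity statement, and then combine it with elementary geometric inequalities on the side lengths.

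By separation of variables, writing $\mathcal{B} = (0,l_1)\times\cdots\times(0,l_n)$ and letting $\mu_k(\ell;\alpha)$ denote the $k$th Robin eigenvalue of the interval $(0,\ell)$, one has $\lambda_1(\mathcal{B};\alpha) = \sum_j \mu_1(l_j;\alpha)$, while the second eigenvalue arises by promoting exactly one index from $1$ to $2$ (the other possibilities --- promoting a single index to some $k \geq 3$, or promoting multiple indices --- give strictly larger increments). Thus
\[
(\lambda_2-\lambda_1)(\mathcal{B};\alpha) \;=\; \min_{1\leq j\leq n} g(l_j;\alpha), \qquad g(\ell;\alpha):=\mu_2(\ell;\alpha)-\mu_1(\ell;\alpha).
\]

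The key step is to prove that $g(\,\cdot\,;\alpha)$ is strictly decreasing on $(0,\infty)$ for each $\alpha\in\R$. By the rescaling $\mu_k(\ell;\alpha) = F_k(\alpha\ell)/\ell^2$, with $F_k$ the Robin eigenvalues of the unit interval, this is equivalent to $\tau H'(\tau)<2H(\tau)$ for all $\tau \in \R$, where $H(\tau)=F_2(\tau)-F_1(\tau)$. When $\tau \leq 0$ the inequality is immediate, since $H>0$ and $H'>0$ by the one-dimensional case of \autoref{gapmonot}, so $\tau H' \leq 0 < 2H$. When $\tau > 0$, the even/odd symmetry about $1/2$ of the first two normalized eigenfunctions $U_k$ together with the Robin condition $U_k'(1) = -\tau U_k(1)$ yields the explicit formula
\[
U_k(1)^2 = \frac{2F_k}{F_k+\tau^2+2\tau}, \qquad k=1,2.
\]
Combined with the Hadamard identity $F_k'(\tau) = 2U_k(1)^2$, clearing positive denominators converts $\tau H' < 2H$ into
\[
F_1 F_2 + (\tau^2+2\tau)(F_1+F_2+\tau^2) > 0,
\]
which is manifest for $\tau > 0$ since every term is positive.

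Finally, under each constraint the largest side of a non-cubical box strictly exceeds the side $c$ of the corresponding cube: $c = D/\sqrt n$ (from $D^2 = \sum l_j^2 \leq n(\max l_j)^2$), $c = V^{1/n}$ (AM--GM), or $c = (S/(2n))^{1/(n-1)}$ (since $\max l_j < c$ would force $S = 2\sum_i \prod_{j\neq i} l_j < 2nc^{n-1}$). Choosing $j_0$ with $l_{j_0} > c$ for any non-cubical $\mathcal{B}$ and invoking the strict decrease of $g$ gives
\[
(\lambda_2-\lambda_1)(\mathcal{B};\alpha) = \min_j g(l_j;\alpha) \leq g(l_{j_0};\alpha) < g(c;\alpha) = (\lambda_2-\lambda_1)(\mathcal{C};\alpha),
\]
where $\mathcal{C}$ denotes the cube of side $c$. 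The main obstacle is the 1D monotonicity for $\tau > 0$, which is dispatched by the algebraic identity above.
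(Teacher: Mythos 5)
Your proposal is correct, and its skeleton coincides with the paper's: the gap of the box is $\min_j g(l_j;\alpha)$, i.e.\ the gap of the longest edge (\autoref{secondeigenbox} and \autoref{boxgap}); the interval gap is strictly decreasing in length; and the cube uniquely minimizes the longest edge under each of the three constraints. Where you genuinely diverge is in how the one-dimensional monotonicity is established. The paper gets it from \autoref{1dimpos}--\autoref{1dimneg}, whose engine is the trigonometric derivative comparison $\go'>\gt'$ of \autoref{derivcomparison} (via monotonicity of $x/(1+|\sinc x|)$) together with the monotonicity of $\ho,\htwo$ for negative parameters. You instead rescale to the unit interval, reduce to $\tau H'(\tau)<2H(\tau)$, dispose of $\tau\le 0$ by citing the $\alpha$-monotonicity of the gap (\autoref{gapmonot}, legitimately available since it is proved first), and for $\tau>0$ combine the Feynman--Hellmann identity $F_k'=2U_k(1)^2$ with the exact boundary-value formula $U_k(1)^2=2F_k/(F_k+\tau^2+2\tau)$ --- which does check out (it follows, for instance, from constancy of $(U_k')^2+F_kU_k^2$ together with the Rayleigh identity and the symmetry $U_k(0)^2=U_k(1)^2$) --- so that the whole inequality collapses to the manifestly positive expression $F_1F_2+(\tau^2+2\tau)(F_1+F_2+\tau^2)$. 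This is arguably cleaner and more conceptual than the paper's case analysis, at the cost of not reusing machinery the paper needs anyway for \autoref{gapD}, \autoref{secondeigenbox} and \autoref{hearingdrum}. Two small points to tidy: at the branch point $\tau=-2$ of $F_2$ one should note the one-sided derivatives suffice (the slopes in fact match, as in the proof of \autoref{secondeigconcave}); and in the surface-area case your parenthetical only rules out $\max_j l_j<c$, so add the one-line observation that $\max_j l_j=c$ for a non-cube also forces $S<2nc^{n-1}$ because some factor in at least one of the products $\prod_{j\ne i}l_j$ is strictly less than $c$.
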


\section*{Spectral ratio $\lambda_2/\lambda_1$} 

The gap maximization in \autoref{gapSV} allows us to maximize also the ratio of the first two eigenvalues. We take the absolute value of the first eigenvalue, in the next result, in order to unify the cases of positive and negative $\alpha$. 

\begin{corollary}[Maximizing the spectral ratio on rectangular boxes]\label{ratio} Fix $\alpha \neq 0$ and the dimension $n \geq 2$. Among rectangular boxes $\mathcal{B}$ of given volume, the  Robin spectral ratio 
\[
\frac{\lambda_2(\mathcal{B};\alpha)}{|\lambda_1(\mathcal{B};\alpha)|}
\]
is maximal for the cube and only the cube.  
\end{corollary}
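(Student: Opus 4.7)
This is a short corollary of \autoref{gapSV} and \autoref{lambda1higherdim}. The key algebraic identity is
\[
\frac{\lambda_2(\mathcal{B};\alpha)}{|\lambda_1(\mathcal{B};\alpha)|} = \frac{\lambda_2(\mathcal{B};\alpha)-\lambda_1(\mathcal{B};\alpha)}{|\lambda_1(\mathcal{B};\alpha)|} + \sign\!\big(\lambda_1(\mathcal{B};\alpha)\big).
\]
Since $\lambda_2>\lambda_1$, the first summand is a quotient of two strictly positive quantities, and the sign of the second summand is constant across all rectangular boxes of a given volume (this is exactly the assertion of \autoref{lambda1higherdim}: $\lambda_1>0$ throughout the class when $\alpha>0$, and $\lambda_1<0$ throughout when $\alpha<0$). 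So the strategy is to maximize the two factors of the quotient \emph{separately} and observe that both maxima are attained uniquely at the cube.

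For the case $\alpha>0$, \autoref{gapSV} says the gap $\lambda_2-\lambda_1$ is uniquely maximized at the cube among boxes of given volume, and \autoref{lambda1higherdim}(i) says $\lambda_1$ is uniquely minimized at the cube, so $1/|\lambda_1|=1/\lambda_1$ is uniquely maximized at the cube. The product of two strictly positive quantities each uniquely maximized at the cube is itself uniquely maximized at the cube; adding the constant $+1$ preserves the conclusion.

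For the case $\alpha<0$, \autoref{lambda1higherdim}(ii) says $\lambda_1<0$ is uniquely maximized (i.e.\ closest to zero) at the cube, so $|\lambda_1|$ is uniquely minimized there and $1/|\lambda_1|$ is uniquely maximized. Combined with \autoref{gapSV} for the gap, the same product argument applies, and subtracting the constant $1$ leaves the unique maximality at the cube intact.

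\textbf{Main obstacle.} There is essentially none: the substantive work is done by \autoref{gapSV} and \autoref{lambda1higherdim}. The only thing to watch is the sign bookkeeping, and in particular the fact that the sign of $\lambda_1$ does not change across rectangular boxes of given volume for a fixed $\alpha \neq 0$, which is precisely guaranteed by \autoref{lambda1higherdim}. Without this uniform-sign property, one could not cleanly split the ratio into a product that inherits maximality from each factor.
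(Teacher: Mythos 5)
Your proposal is correct and follows essentially the same route as the paper: the identical decomposition $\lambda_2/|\lambda_1| = (\lambda_2-\lambda_1)/|\lambda_1| + \sign(\alpha)$ (your $\sign(\lambda_1)$ equals $\sign(\alpha)$ by \autoref{lambda1higherdim}), with the gap maximized by \autoref{gapSV} and $|\lambda_1|$ minimized by \autoref{lambda1higherdim}. Your remarks on the uniform sign of $\lambda_1$ across the class and the strictness of the maximality are a welcome bit of extra care, but the argument is the same.
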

\begin{corollary}[Maximizing the spectral ratio on rectangles, with length scaling]\label{ratio2dim} Fix $\alpha > 0$. The length-scaled Robin spectral ratio 
\[
\frac{\lambda_2(\mathcal{R};\alpha/L)}{|\lambda_1(\mathcal{R};\alpha/L)|}
\]
is maximal for the square and only the square, among rectangles $\mathcal{R}$.  
\end{corollary}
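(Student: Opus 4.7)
The plan is to write the scale-invariant ratio as
\[
\frac{\lambda_2(\mathcal{R};\alpha/L)}{\lambda_1(\mathcal{R};\alpha/L)} = \frac{\lambda_2(\mathcal{R};\alpha/L)\,A}{\lambda_1(\mathcal{R};\alpha/L)\,A}
\]
and split according to whether $\alpha$ lies in the easy range $(0,\alpha_+]$ or the hard range $(\alpha_+,\infty)$. In the easy range, \autoref{lambda2L} says the square maximizes the numerator and \autoref{lambda1L} says the square minimizes the denominator; since both quantities are positive for $\alpha>0$, their quotient is immediately maximal at the square.

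For $\alpha>\alpha_+$ this combination breaks down, since the square no longer maximizes $\lambda_2 A$. Here I would appeal to separation of variables: writing $\mathcal{R}$ as a product of intervals with sides $a\ge b$ and $\beta=\alpha/L$,
\[
\lambda_1 = \mu_1(a;\beta)+\mu_1(b;\beta), \qquad \lambda_2 = \mu_2(a;\beta)+\mu_1(b;\beta),
\]
where $\mu_k(\ell;\beta)$ denotes the $k$th Robin eigenvalue of $(0,\ell)$. The rescaling identity $\mu_k(\ell;\beta)=\tilde\mu_k(\ell\beta)/\ell^2$, with $\tilde\mu_k$ the corresponding eigenvalue on $(0,1)$, together with the area normalization $ab=1$ and the substitution $s=a^2\ge 1$, turn the ratio into an explicit function of the single variable $s$ with $\alpha$ as a parameter, and the claim becomes that this function is maximal at $s=1$.

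In these variables, \autoref{lambda1L} already tells us the denominator $\lambda_1$ is minimized at $s=1$, so it suffices to show the gap $\lambda_2-\lambda_1 = G(\gamma)/s$, with $\gamma=\alpha s/(2(s+1))$ and $G=\tilde\mu_2-\tilde\mu_1$ the 1D gap on $(0,1)$, is also maximal at $s=1$ among $s\ge 1$. A direct manipulation gives
\[
\frac{G(\gamma)}{s} = \frac{G(\gamma)}{\gamma^2}\cdot \Big(\frac{\alpha}{2}\Big)^2 \frac{s}{(s+1)^2},
\]
and since $s/(s+1)^2$ is maximal at $s=1$ on $s\ge 1$, it is enough to prove the cleaner one-variable claim that $G(\gamma)/\gamma^2$ is decreasing on $(0,\infty)$. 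Establishing this by differentiating the transcendental equations $\sqrt{\tilde\mu_1}\tan(\sqrt{\tilde\mu_1}/2)=\gamma$ and $-\sqrt{\tilde\mu_2}\cot(\sqrt{\tilde\mu_2}/2)=\gamma$ that implicitly define $\tilde\mu_1$ and $\tilde\mu_2$ is the delicate analytic step where I expect the real work to lie. A tempting shortcut — combining $\lambda_2 A<\alpha$ from the second half of \autoref{lambda2L} with $\lambda_1(\mathcal{R})A\ge \lambda_1(\mathcal{S})A$ to obtain $\lambda_2/\lambda_1<\alpha/(\lambda_1(\mathcal{S})A)$ — is too weak, since the square itself satisfies $\lambda_2(\mathcal{S})A<\alpha$ once $\alpha>\alpha_+$; so the one-variable monotonicity argument above really is unavoidable.
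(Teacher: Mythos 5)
Your reduction is sound, and the one fact you flag as ``the delicate analytic step'' --- that $G(\gamma)/\gamma^2$ is decreasing on $(0,\infty)$ --- is true, but leaving it unproved is the genuine gap in the proposal. Happily, it is already in the paper. Since the unit-interval gap satisfies $G(\gamma)=\gamma^2\big(\gt(\gamma/2)^2-\go(\gamma/2)^2\big)$ by \autoref{firsteigen} and \autoref{secondeigen}, your claim is exactly the assertion that $F(y)=\gt(y)^2-\go(y)^2$ is decreasing, which is in turn exactly the content of \autoref{1dimpos} (the interval's spectral gap decreases with its length at fixed positive Robin parameter, since that gap equals $\beta^2 F(\beta t)$). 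Directly: $F'=2(\gt\gt'-\go\go')<0$ because $\gt>\go>0$ (as $g_2^{-1}>\pi/2>g_1^{-1}$ for $y>0$) and $\gt'<\go'<0$ by \autoref{monotonicity} and \eqref{Hderiv} in \autoref{derivcomparison}. With that supplied, your argument closes; note also that your ``hard range'' computation then works verbatim for every $\alpha>0$, so the case split at $\alpha_+$ and the appeal to \autoref{lambda2L} are unnecessary.

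For comparison, the paper reaches the conclusion by a shorter route that avoids the single-variable analysis entirely: it writes the ratio as $\frac{(\lambda_2-\lambda_1)(\mathcal{R};\alpha/L)}{\lambda_1(\mathcal{R};\alpha/L)A}\,A+1$ and normalizes the \emph{perimeter} rather than the area. At fixed $L$ the Robin parameter $\alpha/L$ is the same for all competitors, so the gap is maximal for the square by the surface-area version of \autoref{gapSV} (the gap of a rectangle equals the gap of its long side, which is shortest for the square), the factor $A$ is maximal for the square by the isoperimetric inequality for rectangles, and the scale-invariant denominator $\lambda_1 A$ is positive and minimal for the square by \autoref{lambda1L}. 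Your area normalization $s=a^2$ forces the side length and the effective Robin parameter $\gamma=\alpha s/(2(s+1))$ to vary simultaneously, which is what creates the extra factor $G(\gamma)/\gamma^2$ you then must control; the perimeter normalization decouples these and reduces everything to facts already proved.
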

The absolute value on $\lambda_1$ is superfluous in the statement of \autoref{ratio2dim}, since the first eigenvalue is positive when $\alpha>0$. We retain the absolute value anyway because the corollary ought to hold also when $\alpha<0$ --- although I have not found a proof. 

If the \autoref{gapmonotpure} for monotonicity of the spectral ratio were known to be true, then \autoref{ratio2dim} would imply the planar case of \autoref{ratio}, for $\alpha>0$. That short argument is left to the reader. 

The spectral ratio has a long history. Payne, P\'{o}lya and Weinberger \cite{PPW55} proved in the Dirichlet case ($\alpha=\infty$) that $\lambda_2/\lambda_1 \leq 3$ for planar domains. Payne and Schaefer \cite[\S3]{PS01} extended that result to hold on an interval of $\alpha$-values near $\infty$. The Payne--P\'{o}lya--Weinberger (PPW) conjecture asserted a sharp upper bound: that the Dirichlet ratio $\lambda_2/\lambda_1$ should be maximal for the disk. This conjecture and its analogue in $n$ dimensions were proved by Ashbaugh and Benguria \cite{AB91}. The analogous Robin question has been raised by Henrot \cite[p.~458]{H03}: to find the range of $\alpha$ values for which the ball maximizes the Robin spectral ratio. Some inequalities on that ratio have been proved by Dai and Shi \cite{DS14}.

In view of these ratio results, an analogue of \autoref{ratio} seems plausible for general domains. 
\begin{conjx}[Maximizing the spectral ratio]\label{ratioconj} Fix the dimension $n \geq 2$. Among bounded Lipschitz domains $\Omega$ of given volume, the Robin spectral ratio 
\[
\frac{\lambda_2(\Omega;\alpha)}{|\lambda_1(\Omega;\alpha)|}
\]
is maximal for the ball $B(R)$ having the same volume as $\Omega$, when $\alpha \geq -1/R, \alpha \neq 0$.
\end{conjx}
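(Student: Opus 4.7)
The plan is to split \autoref{ratioconj} into the two regimes $\alpha \in [-1/R, 0)$ and $\alpha > 0$, since each needs different ingredients.

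In the negative regime, $\lambda_1(\Omega;\alpha)<0$ so the object to bound is $\lambda_2/|\lambda_1|$. Brock's sharp Steklov inequality gives $\sigma_1(\Omega) \leq \sigma_1(B(R)) = 1/R$, so whenever $-1/R \leq \alpha \leq -\sigma_1(\Omega)$ we have $\lambda_2(\Omega;\alpha) \leq 0 \leq \lambda_2(B(R);\alpha)$, and the ratio for $\Omega$ is non-positive while that for $B(R)$ is non-negative, making the inequality automatic. For the remaining sub-interval $\alpha \in (-\sigma_1(\Omega),0)$ both ratios are strictly positive, and I would combine the Freitas--Laugesen upper bound $\lambda_2(\Omega;\alpha) \leq \lambda_2(B(R);\alpha)$ from \cite{FL18a} (which is valid on $[-(1+1/n)R^{-1},0]$, and so covers our range) with a Bareket-type lower bound $|\lambda_1(\Omega;\alpha)| \geq |\lambda_1(B(R);\alpha)|$. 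Cross-multiplying these two inequalities delivers the conjecture in this sub-interval.

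The positive regime $\alpha > 0$ is harder because $\lambda_2$ is unbounded above over volume-normalized domains, so no direct sharp upper bound on $\lambda_2$ is available. I would route through the length-scaled ratio $\lambda_2(\Omega;\alpha/L(\Omega))/\lambda_1(\Omega;\alpha/L(\Omega))$, extending \autoref{ratio2dim} from rectangles to general convex domains with the disk as the conjectured maximizer. Given such a length-scaled maximization, the rescaling argument already used in the paper to pass from \autoref{lambda1L} to the $2$-dimensional case of \autoref{lambda1higherdim}(i) can be adapted: substitute $\alpha \mapsto \alpha L(B(R))$, use the isoperimetric inequality $L(\Omega) \geq L(B(R))$ to shift the Robin parameter on the $\Omega$ side to a smaller value, and then invoke monotonicity of $\lambda_2/\lambda_1$ in $\alpha$ (\autoref{gapmonotpure}) to restore the common parameter $\alpha$ on both sides. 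In higher dimensions, perimeter should be replaced by $S^{1/(n-1)}$ or the volume--surface scaling of \autoref{convexconj}.

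The principal obstacle is the positive regime, since both inputs to the chain above---a length-scaled ratio inequality for general convex domains and monotonicity of $\alpha \mapsto \lambda_2/\lambda_1$---are themselves open, so the argument is presently conditional. A more direct Payne--P\'{o}lya--Weinberger style attack using test functions built from the ball's first Robin eigenfunction is obstructed by the boundary term $\alpha\int_{\partial \Omega} u^2 \, dS$ in the Rayleigh quotient, which does not cooperate with the symmetrizations that make PPW work in the Dirichlet case. Even the negative regime is conditional on pushing Bareket's lower bound beyond the small-$|\alpha|$ planar neighborhood of zero in which it is currently established, so removing that restriction and devising a genuinely new variational device for $\alpha>0$ together constitute the central difficulties of the full conjecture.
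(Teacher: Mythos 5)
The statement you are asked to prove is \autoref{ratioconj}, which is a \emph{conjecture}: the paper offers no proof, only partial supporting evidence, so there is no ``paper proof'' to match. Your proposal is likewise not a proof, and you say so yourself; the honest assessment is that it is conditional at every point where the conjecture is actually hard. Concretely, the only unconditional content is the sign observation on $[-1/R,-\sigma_1(\Omega)]$ (via Brock, $\sigma_1(\Omega)\le 1/R$, so $\lambda_2(\Omega;\alpha)\le 0\le\lambda_2(B(R);\alpha)$ there), which is a mild extension of the paper's remark that the conjecture holds at the single endpoint $\alpha=-1/R$. On $(-\sigma_1(\Omega),0)$ your cross-multiplication of the Freitas--Laugesen bound $\lambda_2(\Omega;\alpha)\le\lambda_2(B(R);\alpha)$ against a Bareket-type bound $|\lambda_1(\Omega;\alpha)|\ge|\lambda_1(B(R);\alpha)|$ is exactly the combination the paper itself records as evidence --- but the Bareket input is established only for sufficiently small $|\alpha|$ on $C^2$-smooth planar domains, not on all of $(-1/R,0)$, not for Lipschitz domains, and not in dimensions $n\ge 3$. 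So the negative regime is not closed.

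The positive regime is where the proposal genuinely fails to be a proof rather than a proof sketch of a conditional implication. Your chain requires both \autoref{ratioconjlength} (the length-scaled ratio inequality for convex domains, itself only a conjecture, and stated only for \emph{convex} planar domains whereas \autoref{ratioconj} concerns all bounded Lipschitz domains in all dimensions) and \autoref{gapmonotpure} (monotonicity of $\lambda_2/\lambda_1$ in $\alpha$, open even for rectangles). The isoperimetric rescaling step you describe is logically sound granted those two inputs --- it mirrors the paper's own remark that \autoref{gapmonotpure} plus \autoref{ratio2dim} would yield the planar case of \autoref{ratio} --- but it reduces one open problem to two others, one of which is restricted to convex domains and so cannot in any case deliver the Lipschitz statement. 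For the record, the paper's own proved analogue of this conjecture is \autoref{ratio}, for rectangular boxes, and its mechanism is entirely different: it writes $\lambda_2/|\lambda_1| = (\lambda_2-\lambda_1)/|\lambda_1| + \operatorname{sign}(\alpha)$ and optimizes gap and first eigenvalue separately (\autoref{gapSV} and \autoref{lambda1higherdim}). That decomposition cannot transfer to general domains for $\alpha>0$ because the gap is unbounded above there, which is precisely the obstruction you correctly identify; but it is worth noting that no PPW-style or length-scaling route is known to circumvent it, so the conjecture stands open.
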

\autoref{ratioconj} holds for sufficiently small $\alpha<0$ on $C^2$-smooth planar domains of given area, because in that situation Freitas and \Krejcirik\ \cite[Theorem 2]{FK15} showed $|\lambda_1(\Omega;\alpha)|$ is minimal for the disk (the Bareket conjecture), while $\lambda_2(\Omega;\alpha)$ is maximal for the disk by a result of Freitas and Laugesen \cite[Theorem A]{FL18a}. 

The conjecture holds also at $\alpha=-1/R$ in all dimensions, since in that case the spectral ratio is $\leq 0$ by \cite[Theorem A]{FL18a}, with equality for the ball. 

The limiting case $\alpha \to 0$ of \autoref{ratioconj} follows from the isoperimetric theorem and the Szeg\H{o}--Weinberger theorem \cite{W56} for the first nonzero Neumann eigenvalue, as we now explain. For $\alpha \simeq 0$ one has $\lambda_1(\Omega;\alpha) \simeq \alpha S/V$ where $S$ is the surface area of $\partial \Omega$ and $V$ is the volume of $\Omega$ (see \cite[p.\,89]{H17}). Also $\lambda_2(\Omega;\alpha) \simeq \lambda_2(\Omega;0)=\nu_1(\Omega)$, the first nonzero Neumann eigenvalue. Thus \autoref{ratioconj} says in the limit $\alpha \to 0$ that $\nu_1(\Omega)/S$ is maximal for the ball. The isoperimetric theorem guarantees $S$ is minimal for the ball, and the Szeg\H{o}--Weinberger theorem gives maximality of $\nu_1$ for the ball, among domains of given volume. Hence this limiting case of the conjecture holds true. 

For $\alpha < -1/R$, I am not sure what domain might extremize the spectral ratio. Any extremal conjecture would need to be consistent with the spectral asymptotics as $\alpha \to -\infty$. For the ball or any other smooth domain, $\lambda_1$ and $\lambda_2$ are known to behave like $-\alpha^2$ to leading order (by Lacey \emph{et al.}\ \cite{LOS98} for the first eigenvalue and Daners and Kennedy \cite{DK10} for all eigenvalues; see \cite[{\S}4]{H17} for more literature). Thus $\lambda_2/|\lambda_1| \to -1$ as $\alpha \to -\infty$. On the other hand, the asymptotics for polygonal domains by Khalile \cite[Corollary 1.3, Theorem 3.6]{Kh18} imply that certain convex polygons have spectral ratio $\lambda_2/|\lambda_1|$ converging to a constant greater than $-1$ as $\alpha \to -\infty$. 

One lesson here is that rectangles provide an unreliable guide to the behavior of general domains, for large negative $\alpha$. One should in that range consider at least polygons whose angles are not all the same.  

We finish this subsection by conjecturing an analogue of \autoref{ratio2dim}. 
\begin{conjx}[Maximizing the spectral ratio on convex domains, with length scaling]\label{ratioconjlength} Among convex bounded planar domains $\Omega$, the length-scaled Robin spectral ratio 
\[
\frac{\lambda_2(\Omega;\alpha/L)}{|\lambda_1(\Omega;\alpha/L)|}
\]
is maximal for the disk, for each $\alpha \geq -2\pi$. 
\end{conjx}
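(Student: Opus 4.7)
The natural plan is to split the ratio into independent bounds on its numerator and denominator, combining two ingredients already at hand in this paper: (i) the Szeg\H{o}-type bound of \cite[Theorem B]{FL18b}, asserting that the disk maximizes $\lambda_2(\Omega;\alpha/L)A$ when $\alpha \in [-2\pi,2\pi]$ among simply connected (hence among convex) planar domains, and (ii) the conjectural Rayleigh--Bossel-type bound \autoref{lambda1Lgeneral}, asserting that the disk minimizes $\lambda_1(\Omega;\alpha/L)A$ for every $\alpha \in \R$ among convex domains. Writing $D$ for the disk of the same area as $\Omega$, the goal is to combine (i) and (ii) into the product inequality $\lambda_2(\Omega)|\lambda_1(D)| \leq \lambda_2(D)|\lambda_1(\Omega)|$, evaluated throughout at the length-scaled parameter.

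For $\alpha > 0$ the combination is immediate: both $\lambda_1(\Omega;\alpha/L)$ and $\lambda_1(D;\alpha/L(D))$ are positive, so (ii) gives $|\lambda_1(\Omega)| \geq |\lambda_1(D)|$, and together with $\lambda_2(\Omega) \leq \lambda_2(D)$ from (i) the product inequality follows. Two limiting cases are consistent with this: as $\alpha \to 0^+$, \autoref{linearbound} forces $\lambda_1(\Omega;\alpha/L)A \sim \alpha$ while $\lambda_2 \to \nu_1(\Omega)$, so the ratio tracks $A\nu_1(\Omega)/\alpha$, whose maximality for the disk is Szeg\H{o}--Weinberger; and as $\alpha \to \infty$ the statement becomes the Dirichlet PPW inequality of Ashbaugh--Benguria. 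To cover the range $\alpha > 2\pi$, where (i) is not currently established, one would need to extend \cite[Theorem B]{FL18b} to larger positive Robin parameters on convex planar domains --- the center-of-mass trial function used there remains natural, but the Rayleigh quotient estimates become more delicate once the boundary integral dominates.

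The genuinely harder case is $\alpha \in [-2\pi, 0)$. Here the constant test function shows $\lambda_1(\Omega;\alpha/L) < 0$ on every convex domain, and now (i) and (ii) push the ratio in \emph{opposite} directions: $\lambda_2(\Omega) \leq \lambda_2(D)$ while $|\lambda_1(\Omega)| \leq |\lambda_1(D)|$. A naive sandwich fails, and one must argue that the numerator drops off from the disk value faster than the denominator does. A promising route is to adapt the Ashbaugh--Benguria proof of PPW directly to the length-scaled Robin setting: using a test function $P u_1$, where $u_1$ is the first Robin eigenfunction on $\Omega$ and $P$ is a coordinate-type function with $\int_\Omega P u_1^2 \, dx = 0$, and exploiting the Robin boundary condition to cancel boundary terms, one aims at a relative gap bound of the form $\lambda_2 - \lambda_1 \leq \int_\Omega |\nabla P|^2 u_1^2 \, dx \big/ \int_\Omega P^2 u_1^2 \, dx$, then controls the right side by rearrangement in the convex setting to reduce to the disk. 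The main obstacle, beyond the fact that \autoref{lambda1Lgeneral} is itself open, is precisely this adaptation: the sharp interplay with the length-scaled boundary term and with the threshold constant $\alpha = -2\pi$ has no counterpart in the Dirichlet Ashbaugh--Benguria argument, and extracting it appears to require genuinely new rearrangement ideas on convex planar domains.
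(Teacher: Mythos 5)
The statement you are addressing is \autoref{ratioconjlength}, which the paper presents as a \emph{conjecture}: no proof is given, only consistency checks at the endpoint $\alpha=-2\pi$ (where $\lambda_2\le 0$ with equality for the disk, by \cite[Theorem B]{FL18b}) and in the limits $\alpha\to 0$ (Szeg\H{o}--Weinberger) and $\alpha\to\infty$ (Ashbaugh--Benguria). Your proposal is likewise not a proof, and you are candid about this, but the gaps should be named precisely. First, your case $\alpha>0$ rests on \autoref{lambda1Lgeneral}, which is itself an open conjecture of this paper, so even on the range $0<\alpha\le 2\pi$ where \cite[Theorem B]{FL18b} is available you have established nothing unconditionally; for $\alpha>2\pi$ you additionally need an extension of that theorem which the paper explicitly flags as unknown. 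Second, for $\alpha\in[-2\pi,0)$ you correctly observe that the two ingredients push the ratio in opposite directions --- under \autoref{lambda1Lgeneral} the disk has the \emph{largest} $|\lambda_1|$ when $\alpha<0$, which is the wrong direction for bounding a denominator --- so the split-numerator-and-denominator strategy collapses entirely on that range, and the PPW-style gap estimate you sketch as a replacement is not carried out. A small attribution slip: the asymptotic $\lambda_1(\Omega;\alpha/L)A\sim\alpha$ as $\alpha\to 0$ does not follow from \autoref{linearbound}, which is only a one-sided inequality; it comes from the small-$\alpha$ expansion $\lambda_1(\Omega;\alpha)\simeq \alpha S/V$ cited in the paper.

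That said, your analysis accurately reproduces the heuristics the paper itself offers in support of the conjecture, and your identification of the sign conflict for negative $\alpha$ is a genuine and correct observation about where the difficulty lies. Since every load-bearing step is either an open conjecture or an unexecuted sketch, present this as a reduction and obstruction analysis, not as a proof.
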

The conjecture holds when $\alpha=-2\pi$, because then the second eigenvalue is $\leq 0$ with equality for the disk, by \cite[Theorem B]{FL18b} (which applies to all simply connected planar domains, not just convex ones). Further, the second eigenvalue of the disk is positive when $\alpha>-2\pi$. 

The limiting case $\alpha \to 0$ of \autoref{ratioconjlength} reduces to the Szeg\H{o}--Weinberger theorem, since $\lambda_1(\Omega;\alpha/L) \simeq \alpha/A$ and $\lambda_2(\Omega;\alpha/L) \simeq \nu_1(\Omega)$. 

The limiting case $\alpha \to \infty$ of the conjecture would recover the convex planar case of  Ashbaugh and Benguria's sharp PPW inequality.

\section*{Hearing the shape of a Robin rectangle} Dirichlet and Neumann drums cannot always be ``heard'', as Gordon, Webb and Wolpert \cite{GWW92} famously showed. The inverse spectral problem for Robin drums is apparently an open problem. Arendt, ter Elst and Kennedy \cite{AEK14} have written that ``it may well be the case that one can hear the shape of a drum after all, if one loosens the membrane before striking it''.

Hearing the shape of a \emph{rectangular} drum with Robin boundary conditions is a solvable special case, and requires merely the first two frequencies:
\begin{theorem}[Hearing a rectangular Robin drum]\label{hearingdrum}
If $\alpha \neq 0$ then each rectangle $\mathcal{R}$ is determined up to congruence by its first two eigenvalues, $\lambda_1(\mathcal{R};\alpha)$ and $\lambda_2(\mathcal{R};\alpha)$.
\end{theorem}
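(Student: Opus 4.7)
The plan is to apply separation of variables so that the Robin spectrum of $\mathcal{R}$ becomes a combination of one-dimensional Robin spectra, and then invert this relation using monotonicity properties of the 1D eigenvalues. Write $\mathcal{R}=(0,a)\times(0,b)$ with $a\leq b$, let $\mu_k(\ell)$ denote the $k$-th Robin eigenvalue on $(0,\ell)$ with parameter $\alpha$, and set $\gamma(\ell):=\mu_2(\ell)-\mu_1(\ell)$. Separation of variables identifies the spectrum of $\mathcal{R}$ with the family $\{\mu_j(a)+\mu_k(b):j,k\geq 1\}$.

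First I would check that $\lambda_1(\mathcal{R};\alpha)=\mu_1(a)+\mu_1(b)$ and $\lambda_2(\mathcal{R};\alpha)=\mu_1(a)+\mu_2(b)$. The former is immediate, and for the latter the only competing candidate $\mu_2(a)+\mu_1(b)$ exceeds the first by $\gamma(a)-\gamma(b)\geq 0$, provided $\gamma$ is non-increasing in $\ell$ (since $a\leq b$). Subtracting then gives $\lambda_2-\lambda_1=\gamma(b)$, and strict monotonicity of $\gamma$ in $\ell$ recovers $b$ from the first two eigenvalues. Knowing $b$ determines $\mu_1(b)$, hence $\mu_1(a)=\lambda_1-\mu_1(b)$, and strict monotonicity of $\ell\mapsto\mu_1(\ell)$ then recovers $a$.

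The substantive input is therefore the monotonicity, for fixed $\alpha\neq 0$, of $\mu_1(\ell)$ and of $\gamma(\ell)$ as functions of $\ell>0$. Both are extracted from the transcendental equations for 1D Robin eigenvalues compiled in \autoref{identifyinginterval} and \autoref{identifying}. For $\alpha>0$ all eigenvalues are positive with $\mu_n=k_n^2$, where $k_n\ell+2\arctan(k_n/\alpha)=n\pi$; implicit differentiation gives
\[
\frac{dk_n}{d\ell}=\frac{-k_n(k_n^2+\alpha^2)}{\ell(k_n^2+\alpha^2)+2\alpha}<0,
\]
so each $\mu_n$ is strictly decreasing, and the gap satisfies $d\gamma/d\ell=2[f(k_1)-f(k_2)]$ with $f(k):=k^2(k^2+\alpha^2)/[\ell(k^2+\alpha^2)+2\alpha]$. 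A short calculation shows $f$ is strictly increasing in $k>0$, so $f(k_2)>f(k_1)$ and $d\gamma/d\ell<0$ as well. For $\alpha<0$ the analogous analysis proceeds similarly but must also handle the negative eigenvalue regime, governed by hyperbolic transcendental equations, and the transition between the two regimes.

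The hypothesis $\alpha\neq 0$ is necessary: in the Neumann case every rectangle has $\lambda_1=0$ and $\lambda_2=\pi^2/b^2$, so the first two eigenvalues determine only the longer side. The principal obstacle in the plan is the monotonicity of $\gamma$ when $\alpha<0$, since positive and negative eigenvalues require separate trigonometric and hyperbolic analyses, and continuity of the monotonicity across the transition must be verified.
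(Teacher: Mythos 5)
Your proposal follows essentially the same route as the paper: separation of variables reduces $\lambda_2-\lambda_1$ of the rectangle to the 1D spectral gap of the longer side, strict monotonicity of that gap in the length recovers the longer side, and strict monotonicity of $\mu_1$ (valid only for $\alpha\neq 0$) then recovers the shorter side. The monotonicity for $\alpha<0$ that you flag as the principal remaining obstacle is exactly what the paper's \autoref{1dimneg} supplies (alongside \autoref{1dimpos} for $\alpha>0$), so within the paper's framework your argument is complete.
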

The theorem is spectacularly false in the Neumann case ($\alpha = 0$), where no pre-specified number of eigenvalues can be guaranteed to determine the rectangle. For example, every rectangle of width $m$ and height less than $1$ has the same first $m+1$ Neumann eigenvalues, namely $(j\pi/m)^2$ for $j=0,1,\dots,m$. 

Incidentally, a Steklov inverse spectral problem was resolved recently for rectangular boxes in all dimensions by Girouard \emph{et al.}\ \cite[Corollary 1.8]{GLPS17}, who observed that the full spectrum determines the perimeter, and then the perimeter and the first eigenvalue $\sigma_1$ together determine the rectangle. 

\section*{Polygonal open problems}
\label{openproblems}

For each theorem where the square is the optimizer among rectangles, it seems reasonable to conjecture that the square is in fact optimal among all (convex) quadrilaterals. The exception is \autoref{gapSV}, where the spectral gap is unbounded above in general; see the discussion before that theorem. 

The equilateral triangle should presumably be optimal among triangles, although sometimes triangles are so  ``pointy'' that they behave differently from general domains. More generally, the regular $N$-gon might be optimal among (convex) $N$-gons, although such problems seem currently out of reach --- for example, the polygonal Rayleigh conjecture about minimizing the first Dirichlet eigenvalue remains open even for pentagons. 

The inverse spectral problem for triangles is particularly fascinating. A triangle is known to be determined by its full Dirichlet spectrum, via the wave trace method of Durso \cite{D90}. Later, Grieser and Maronna \cite{GM13} found a delightful, different proof using the heat trace and the sum of reciprocal angles of the triangle. These results are wildly overdetermined, though, since they employ infinitely many eigenvalues in pursuit of the three side lengths of the triangle. For that reason, Laugesen and Suideja \cite[p.\,17]{LS09} suspected that the first three Dirichlet eigenvalues should suffice to determine a triangle. Antunes and Freitas \cite{AF11} developed convincing numerical evidence in favor of that conjecture, although a proof remains elusive. Similar results should presumably hold for the Robin problem when $\alpha \neq 0$. (The Neumann case $\alpha=0$ is less clear \cite[Section 3c]{AF11}.) No investigations appear yet to have been carried out on determining a triangle from its first three Robin eigenvalues. 

\section{\bf Monotonicity and convexity lemmas}
\label{notation}

This self-contained section establishes the underpinnings of the rest of the paper. The section can be skipped for now, and revisited later as needed. 

The four basic functions needed to determine the first and second Robin eigenvalues of intervals, and hence of rectangular boxes, are: 
\begin{align*}
g_1(x) = x \tan x , \qquad & g_1 : (0,\pi/2) \to (0,\infty) , \\
g_2(x) = -x \cot x , \qquad & g_2 : (0,\pi) \to (-1,\infty) , \\
h_1(x) = x \tanh x ,  \qquad & h_1 : (0,\infty) \to (0,\infty) , \\
h_2(x) = x \coth x ,  \qquad & h_2 : (0,\infty) \to (1,\infty) . 
\end{align*}
These functions have positive first derivatives and so are strictly increasing, as shown in \autoref{g1g2h1h2}. 
\begin{figure}
\begin{center}
\includegraphics[scale=0.4]{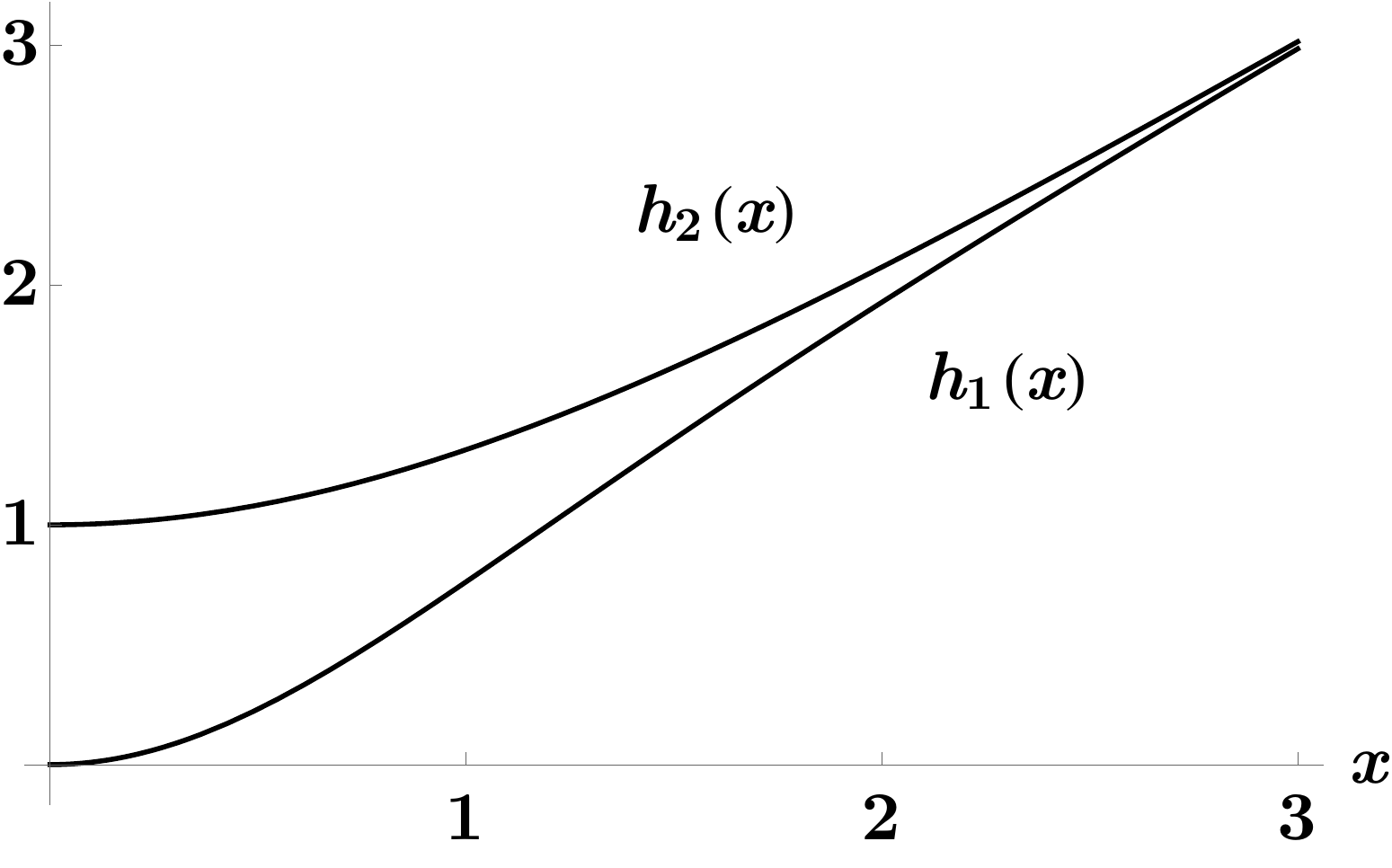}
\hspace{1cm}
\includegraphics[scale=0.4]{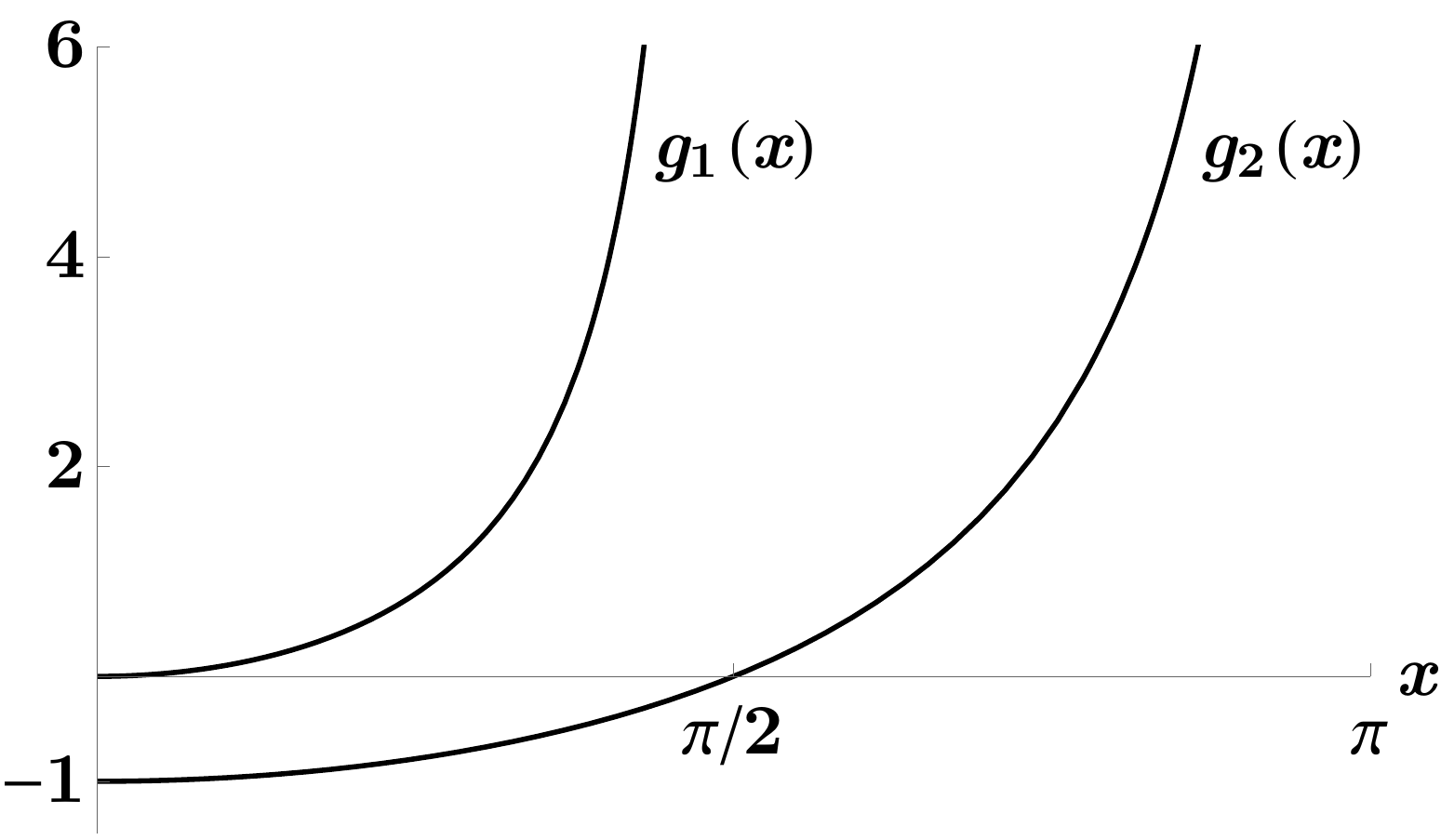}
\end{center}
\caption{\label{g1g2h1h2} The functions $h_1(x)=x \tanh x$ and $h_2(x)=x \coth x$, and $g_1(x)=x \tan x$ and $g_2(x)=-x \cot x$.}
\end{figure}
\begin{figure}
\begin{center}
\includegraphics[scale=0.4]{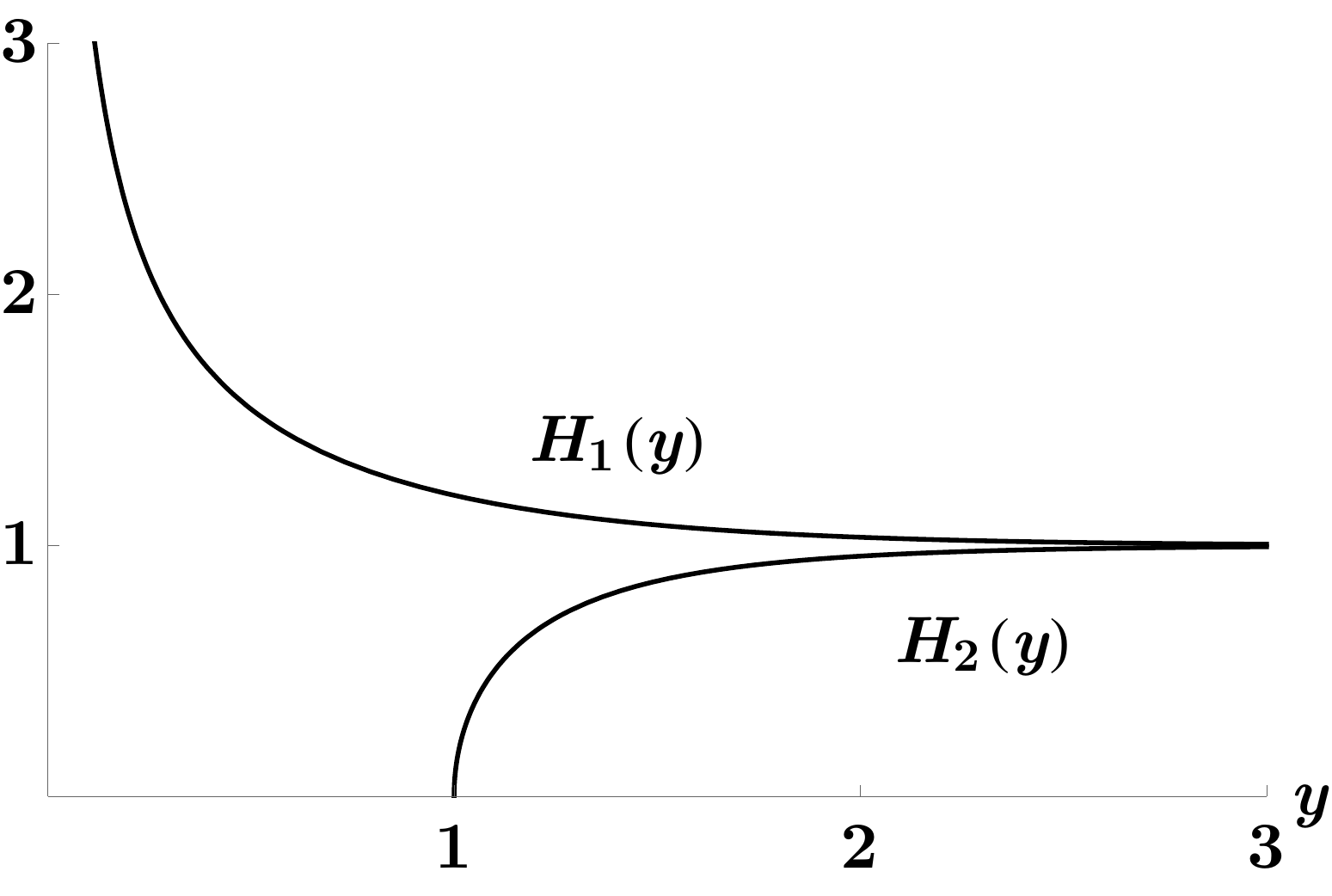}
\hspace{1cm}
\includegraphics[scale=0.4]{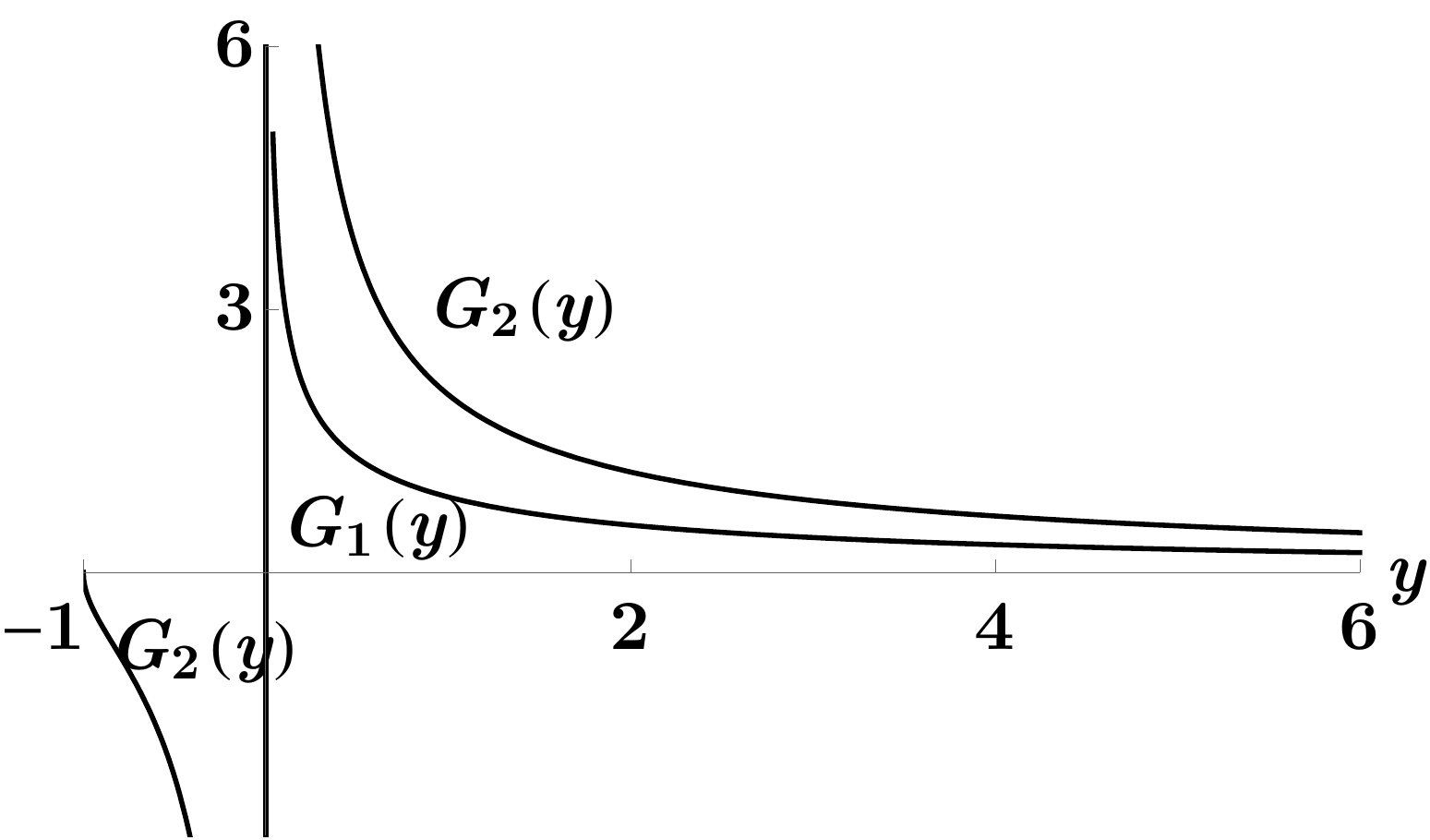}
\end{center}
\caption{\label{G1G2H1H2} The functions $\ho(y)=h_1^{-1}(y)/y$ and $\htwo(y)=h_2^{-1}(y)/y$, and $\go(y)=g_1^{-1}(y)/y$ and $\gt(y)=g_2^{-1}(y)/y$.}
\end{figure}
Define four more functions, shown in \autoref{G1G2H1H2}, by 
\[
\go(y) = \frac{g_1^{-1}(y)}{y} , \quad \gt(y) = \frac{g_2^{-1}(y)}{y} , \quad \ho(y) = \frac{h_1^{-1}(y)}{y} , \quad \htwo(y) = \frac{h_2^{-1}(y)}{y} , \quad 
\]
for $y$-values in the ranges of $g_1,g_2,h_1,h_2$ respectively. Three of them are strictly decreasing, while $\htwo$ is strictly increasing, as we now justify. 
\begin{lemma}[Monotonicity]\label{monotonicity}\ 
\begin{itemize}
\item[(i)] $\go^\prime(y)<0$ for all $y>0$.
\item[(ii)] $\gt^\prime(y)<0$ for all $y> -1, y \neq 0$.
\item[(iii)] $\ho^\prime(y)<0$ for all $y>0$.
\item[(iv)] $\htwo^\prime(y)>0$ for all $y>1$.
\end{itemize}
\end{lemma}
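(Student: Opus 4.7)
My plan is to exploit a cancellation hidden in the definitions of $\go, \gt, \ho, \htwo$. Substituting $x = g_1^{-1}(y)$ (so that $y = x \tan x$), I would rewrite
\[
\go(y) = \frac{g_1^{-1}(y)}{y} = \frac{x}{x \tan x} = \cot x.
\]
The same trick collapses the other three functions to $\gt(y) = -\tan x$, $\ho(y) = \coth x$, and $\htwo(y) = \tanh x$, where now $x$ stands for $g_2^{-1}(y)$, $h_1^{-1}(y)$, $h_2^{-1}(y)$, respectively. In other words, each of the four functions is nothing more than a standard trigonometric or hyperbolic function precomposed with the corresponding inverse.

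From there the chain rule handles everything in one line. For (i),
\[
\go'(y) = \frac{(\cot x)'}{g_1'(x)} = \frac{-\csc^2 x}{g_1'(x)} < 0,
\]
using that $g_1' > 0$ on $(0, \pi/2)$, which the paper has already recorded. For (ii), (iii), (iv) the chain-rule numerators become $-\sec^2 x$, $-\csch^2 x$, and $\sech^2 x$, while the denominators $g_2'(x)$, $h_1'(x)$, $h_2'(x)$ remain positive on the relevant $x$-domains. The sign pattern $<0, <0, >0$ claimed in the lemma falls out at once.

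The only thing to watch is to align each stated $y$-range with the corresponding $x$-range, which is taken care of by the bijections $g_1 : (0,\pi/2) \to (0,\infty)$, $g_2 : (0,\pi) \to (-1,\infty)$, $h_1 : (0,\infty) \to (0,\infty)$, $h_2 : (0,\infty) \to (1,\infty)$. The sole exception is $x = \pi/2$ in the $\gt$ case, which corresponds exactly to the excluded value $y = 0$ (where $-\tan x$ blows up). I do not anticipate any real obstacle here; once the cancellation $g_j^{-1}(y)/y = (\text{cotangent-type function})$ is spotted, the lemma is essentially an identity in disguise.
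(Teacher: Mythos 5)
Your proposal is correct and follows essentially the same route as the paper: the author also writes $x=h^{-1}(y)$, derives $\h'(y)=\frac{1}{h'(x)}\bigl(\frac{x}{h(x)}\bigr)'$, and then observes the same cancellations $x/g_1(x)=\cot x$, $x/g_2(x)=-\tan x$, $x/h_1(x)=\coth x$, $x/h_2(x)=\tanh x$, reading off the signs from the derivatives of these elementary functions together with $g_1',g_2',h_1',h_2'>0$. Your handling of the excluded value $y=0$ (i.e.\ $x=\pi/2$) in part (ii) matches the paper's parenthetical remark as well.
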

\begin{proof}
Given any strictly increasing function $h$ with $h^\prime>0$, we may write $x=h^{-1}(y)$ and differentiate the function $\h(y)=h^{-1}(y)/y=x/h(x)$ to obtain 
\begin{equation} \label{Hprime}
\h^\prime(y) = \frac{1}{h^\prime(x)} \left( \frac{x}{h(x)} \right)^{\! \prime} ,
\end{equation}
where the derivative on the left is taken with respect to $y$ and on the right with respect to $x$. Applying this derivative formula to the four functions in the lemma gives the following observations. 

$(x/g_1(x))^\prime = (\cot x)^\prime<0$, so that $\go^\prime < 0$.

$(x/g_2(x))^\prime = -(\tan x)^\prime<0$, so that $\gt^\prime < 0$. (Here $x \neq \pi/2$, since $y \neq 0$.)

$(x/h_1(x))^\prime = (\coth x)^\prime<0$, so that $\ho^\prime < 0$.

$(x/h_2(x))^\prime = (\tanh x)^\prime>0$, so that $\htwo^\prime > 0$.
\end{proof}
We proceed to develop concavity and derivative properties of the eight functions. 
\begin{lemma}[Concavity of the inverse squared]\label{concaveinverse}\ 
\begin{itemize}
\item[(i)] $(g_1^{-1}(y)^2)^{\prime \prime}<0$ for all $y>0$. 
\item[(ii)] $(g_2^{-1}(y)^2)^{\prime \prime}<0$ for all $y> -1$. 
\item[(iii)] $(h_1^{-1}(y)^2)^{\prime \prime}>0$ for all $y>0$. 
\item[(iv)] $(h_2^{-1}(y)^2)^{\prime \prime}>0$ for all $y>1$. 
\end{itemize}
\end{lemma}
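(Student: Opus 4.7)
The plan is to reduce all four claims to a uniform sign analysis of $f'(x) - x f''(x)$, where $f$ denotes whichever of $g_1, g_2, h_1, h_2$ is at hand. Parametrizing $H(y) = f^{-1}(y)^2$ by $x$ through $y = f(x)$, the chain rule gives $H'(y) = 2x/f'(x)$ and hence
\[
H''(y) \;=\; \frac{1}{f'(x)}\,\frac{d}{dx}\!\left(\frac{2x}{f'(x)}\right) \;=\; \frac{2\bigl(f'(x) - x f''(x)\bigr)}{f'(x)^3}.
\]
Since each of the four $f$'s is strictly increasing with $f' > 0$ on its domain, the sign of $H''$ matches that of $f'(x) - x f''(x)$.

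For part (i), I would expand $g_1''(x) = 2\sec^2 x\,(1 + x\tan x)$ and then multiply $g_1' - x g_1''$ by $\cos^2 x$ to obtain $\tfrac{1}{2}\sin 2x - x - 2x^2 \tan x$; this is negative on $(0,\pi/2)$ because $\sin 2x < 2x$ and $\tan x > 0$. The hyperbolic part (iii) is entirely parallel: multiplying by $\cosh^2 x$ yields $\tfrac{1}{2}\sinh 2x - x + 2x^2 \tanh x$, which is positive since $\sinh 2x > 2x$ and $\tanh x > 0$. Both of these parts follow with no further work.

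Parts (ii) and (iv) are more delicate, because after the analogous reduction the expression is no longer of one sign term by term. For (ii), multiplying $g_2' - x g_2''$ by $\sin^2 x$ gives $-\tfrac{1}{2}\sin 2x - x + 2x^2 \cot x$; on $[\pi/2,\pi)$ this is immediately negative since $\cot x \leq 0$ while $x \geq \pi/2 > \lvert\sin 2x\rvert/2$. On the harder range $(0,\pi/2)$, multiplying through by $\tan x > 0$ converts the desired inequality into $\sin^2 x + x \tan x > 2x^2$. I would split this as $(\sin^2 x - x^2) + x(\tan x - x)$ and bound the negative piece via $x^2 - \sin^2 x = (x - \sin x)(x + \sin x) < 2x(x - \sin x)$, reducing the matter to the one-variable inequality $\tan x + 2\sin x > 3x$. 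Setting $\phi(x) = \tan x + 2\sin x - 3x$, one verifies $\phi(0) = \phi'(0) = 0$ together with
\[
\phi''(x) \;=\; 2\sec^2 x\,\tan x - 2\sin x \;=\; 2\sin x\,(\sec^3 x - 1) \;>\; 0,
\]
whence $\phi' > 0$ and $\phi > 0$ on $(0,\pi/2)$. Part (iv) runs in exact parallel: multiplying $h_2' - x h_2''$ by $\sinh^2 x$ and then by $\tanh x$ reduces positivity to $\sinh^2 x + x \tanh x > 2x^2$, and the companion inequality $2\sinh x + \tanh x > 3x$ follows from $\psi(0) = \psi'(0) = 0$ together with $\psi''(x) = 2\sinh x\,(1 - \sech^3 x) > 0$.

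The main obstacle is precisely this pair (ii) and (iv): unlike (i) and (iii), the positive contribution must dominate the negative one by slightly more than the naive bounds $\sin x < x$ or $\tanh x < x$ deliver. The two-step device above --- a factor-of-$2x$ comparison, followed by a ``double-vanishing at $0$, positive second derivative thereafter'' analysis applied to the auxiliary $\phi$ or $\psi$ --- is what handles both cases by the same mechanism.
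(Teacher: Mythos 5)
Your proposal is correct and follows essentially the same route as the paper: both hinge on the identity $\tfrac12\,(f^{-1}(y)^2)'' = \bigl(f'(x)-xf''(x)\bigr)/f'(x)^3$, dispose of (i), (iii), and the $x\in[\pi/2,\pi)$ half of (ii) via $\pm\sin 2x < 2x < \sinh 2x$, and handle the two hard cases by reducing to $\sin^2 x + x\tan x > 2x^2$ and $\sinh^2 x + x\tanh x > 2x^2$. The only (cosmetic) divergence is the last step: the paper shows these target expressions vanish to second order at $0$ and have positive second derivative (e.g.\ $(2x\tan x + 2\sin^2 x - 4x^2)'' = \tan x\sec^2 x\,(4x - \sin 4x)$), whereas you first weaken to the Huygens-type inequalities $\tan x + 2\sin x > 3x$ and $2\sinh x + \tanh x > 3x$ and then apply the same double-vanishing device to those; both finishes are valid.
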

\begin{proof}
Writing $y=h(x)$, we find
\[
\frac{1}{2} \frac{d^2\ }{dy^2} \left( h^{-1}(y)^2 \right) = \frac{1}{2} \frac{d^2(x^2)}{dy^2} = \frac{h^\prime(x) - x h^{\prime \prime}(x)}{h^\prime(x)^3} .
\]
Replacing $h$ on the right side with $g_1,g_2,h_1,h_2$ respectively gives the following expressions, where for $g_2$ we split the interval in two pieces:
\begin{align}
\text{(i)} \hspace{2cm} & -\frac{4\cos^4 x}{(2x+\sin 2x)^3} (2x - \sin 2x + 4 x^2 \tan x) < 0 , \qquad x \in (0,\pi/2) , \label{inverse-i} \\
\text{(ii)} \hspace{2cm} & -\frac{4\sin^3 x \cos x}{(2x-\sin 2x)^3} (2x \tan x  + 2\sin^2 x - 4 x^2) < 0 , \qquad x \in (0,\pi/2) , \label{inverse-iia} \\
\text{(ii)} \hspace{2cm} & -\frac{4\sin^4 x}{(2x-\sin 2x)^3} (2x + \sin 2x - 4 x^2 \cot x) < 0 , \qquad x \in [\pi/2,\pi) , \label{inverse-iib} \\
\text{(iii)} \hspace{2cm} & \frac{4\cosh^4 x}{(\sinh 2x +  2x)^3} (\sinh 2x - 2x + 4 x^2 \tanh x) > 0 , \qquad x \in (0,\infty) , \label{inverse-iii} \\
\text{(iv)} \hspace{2cm} & \frac{4\sinh^3 x \cosh x}{(\sinh 2x - 2x)^3} (2\sinh^2 x + 2x \tanh x - 4 x^2) > 0 , \qquad x \in (0,\infty) , \label{inverse-iv} 
\end{align}
noting that the inequalities \eqref{inverse-i}, \eqref{inverse-iib} and \eqref{inverse-iii} use only that $\pm \sin 2x < 2x < \sinh 2x$, while inequalities \eqref{inverse-iia} and \eqref{inverse-iv} are proved as follows. 

To show negativity in \eqref{inverse-iia}, observe that $2x \tan x  + 2\sin^2 x - 4 x^2$ is positive for all $x \in (0,\pi/2)$ because for small $x$ it behaves like $O(x^2)$ and its second derivative is positive on the whole interval: 
\[
(2x \tan x + 2\sin^2 x - 4 x^2)^{\prime \prime} = \tan x \sec^2 x (4x-\sin 4x) > 0 .
\]
The argument for \eqref{inverse-iv} is analogous: $2\sinh^2 x + 2x \tanh x - 4 x^2$ behaves like  $O(x^2)$ for small $x$ and has positive second derivative everywhere since 
\[
(2\sinh^2 x + 2x \tanh x - 4 x^2)^{\prime \prime} = \tanh x \sech^2 x (\sinh 4x - 4x) > 0 .
\]
\end{proof}

\subsection*{Definition of $\alpha_\pm$}
As needed for the statement of \autoref{lambda2L}, let $\alpha_+ \simeq 33.2054$ be the root of
\begin{equation}
g_1^{-1}(\alpha/8)^2 + g_2^{-1}(\alpha/8)^2 = \alpha/4 , \label{alphaplus} 
\end{equation}
and $\alpha_- \simeq -9.3885$ be the root of
\begin{equation}
h_1^{-1}(|\alpha|/8)^2 + h_2^{-1}(|\alpha|/8)^2 = |\alpha|/4 . \label{alphaneg}
\end{equation}
That these roots exist and are unique can be seen as follows. At $\alpha=0$, the left side of \eqref{alphaplus} is $0^2+(\pi/2)^2$ while the right side is $0$, and so the left side is larger. As $\alpha \to \infty$ the left side of \eqref{alphaplus} approaches $(\pi/2)^2+\pi^2$ while the right side approaches $\infty$, and so the right side is larger. Hence \eqref{alphaplus} has a root at some $\alpha_+>0$, and the root is unique because  \autoref{concaveinverse} implies that $(g_1^{-1})^2+(g_2^{-1})^2$ is strictly concave. Similarly, at $\alpha=-8$, the left side of \eqref{alphaneg} is $h_1^{-1}(1)^2+0^2$ while the right side is $2$, and so the left side is smaller (one computes that $h_1(\sqrt{2})>1$ and so $h_1^{-1}(1)<\sqrt{2})$. As $\alpha \to -\infty$ the left side of \eqref{alphaneg} grows quadratically with $\alpha$, while the right side grows only linearly, and so the left side is larger. Hence \eqref{alphaneg} has a root for some $\alpha_-<-8$, and the root is unique because  \autoref{concaveinverse} implies that $(h_1^{-1})^2+(h_2^{-1})^2$ is strictly convex. The  roots $\alpha_+$ and $\alpha_-$ can then be approximated numerically, to find the values stated above. \qed

\begin{lemma}[Bounds on the inverse]\label{inversebounds}
For all $y>0$,  
\[
g_1^{-1}(y)^2 > y - y^2  \qquad \text{and} \qquad h_1^{-1}(y)^2 < y + y^2 .
\]
\end{lemma}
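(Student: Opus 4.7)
The plan is to change variables back to $x$ under each inverse: setting $x = g_1^{-1}(y) \in (0,\pi/2)$ in the first inequality and $x = h_1^{-1}(y) \in (0,\infty)$ in the second, everything in the claimed bounds becomes an elementary expression in $x$, since $y = x \tan x$ and $y = x \tanh x$ respectively. I expect both claims to then collapse to the classical inequalities $\sin 2x < 2x < \sinh 2x$ for $x > 0$.

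For the first inequality, after the substitution $y = x\tan x$ the claim $g_1^{-1}(y)^2 > y - y^2$ reads
\[
x^2 > x\tan x - x^2 \tan^2 x .
\]
Dividing by $x > 0$ and collecting the $x$-terms on the left gives $x(1+\tan^2 x) > \tan x$, i.e.\ $x \sec^2 x > \tan x$. Multiplying through by $\cos^2 x > 0$ this becomes $x > \sin x \cos x = \tfrac{1}{2}\sin 2x$, which is simply the well-known bound $\sin 2x < 2x$ on $(0,\pi/2)$. For the second inequality, the substitution $y = x\tanh x$ reduces $h_1^{-1}(y)^2 < y + y^2$ to
\[
x^2 < x\tanh x + x^2 \tanh^2 x ,
\]
and the analogous rearrangement yields $x \sech^2 x < \tanh x$, equivalently $x < \sinh x \cosh x = \tfrac{1}{2}\sinh 2x$, which is the equally standard bound $2x < \sinh 2x$ on $(0,\infty)$.

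There is no serious obstacle: once one thinks to pull the inverse out by substituting, the lemma becomes the pair of elementary inequalities $\sin 2x < 2x < \sinh 2x$, each of which follows from the positivity of the derivative of $2x - \sin 2x$ (respectively of $\sinh 2x - 2x$) starting from the shared value $0$ at $x = 0$, or equally well from the sign pattern of the Taylor coefficients at zero. The only real step to verify carefully is the algebraic manipulation showing that the cubic bounds $y \mp y^2$ come out exactly to these familiar inequalities after the substitution; the rest is automatic.
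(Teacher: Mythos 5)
Your proposal is correct and follows exactly the paper's argument: substitute $y = g_1(x) = x\tan x$ (resp.\ $y = h_1(x) = x\tanh x$) to reduce the claims to $x(1+\tan^2 x) > \tan x$ and $x(1-\tanh^2 x) < \tanh x$, i.e.\ $x > \sin x\cos x$ and $x < \sinh x\cosh x$. Nothing to add.
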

\begin{proof}
Putting $y=g_1(x)=x \tan x$ into the first inequality shows it is equivalent to $x(1+\tan^2 x) > \tan x$, which reduces to $x > \sin x \cos x$. For the second inequality, putting $y=h_1(x)=x \tanh x$ reduces it to $x(1-\tanh^2 x) < \tanh x$ and hence to $x < \sinh x \cosh x$.
\end{proof}
\begin{lemma}[Asymptotics of the inverse]\label{inverseasymptotics} As $y \to \infty$, 
\[
h_1^{-1}(y) = y \left( 1 + O(e^{-2y}) \right) \qquad \text{and} \qquad h_2^{-1}(y) = y \left( 1 + O(e^{-2y}) \right) .
\] 
\end{lemma}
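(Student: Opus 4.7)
The plan is to reduce the statement to the elementary asymptotic $\tanh x = 1 + O(e^{-2x})$ and $\coth x = 1 + O(e^{-2x})$ as $x \to \infty$, which is immediate from the identities $\tanh x = 1 - 2e^{-2x}/(1+e^{-2x})$ and $\coth x = 1 + 2e^{-2x}/(1-e^{-2x})$. Once this is in hand, the task becomes a short two-sided sandwich argument that inverts these asymptotics, with monotonicity of $\tanh$ and $\coth$ carrying the whole burden.

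For $h_1^{-1}$: first I would observe that $\tanh x < 1$ forces $h_1(x) < x$, so $y = h_1(x)$ gives $y < x$. Using that $\tanh$ is increasing, the relation $x = y/\tanh x$ together with $\tanh x \geq \tanh y$ produces the sandwich
\[
y \;\leq\; x \;\leq\; \frac{y}{\tanh y} \;=\; y\bigl(1 + O(e^{-2y})\bigr),
\]
and the asymptotic for $h_1^{-1}$ drops out immediately.

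For $h_2^{-1}$: now $\coth x > 1$ gives $h_2(x) > x$, so $y > x$. Here I would exploit that $\coth$ is \emph{decreasing} on $(0,\infty)$, so $x \leq y$ implies $\coth x \geq \coth y$; hence
\[
x \;=\; \frac{y}{\coth x} \;\geq\; \frac{y}{\coth y} \;=\; y\tanh y \;=\; y\bigl(1 - O(e^{-2y})\bigr).
\]
Combined with $x \leq y$, this yields $h_2^{-1}(y) = y(1+O(e^{-2y}))$.

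No real obstacle arises: the argument is essentially a two-line squeeze, and monotonicity of $\tanh$ and $\coth$ is the key structural input, since it is precisely what lets the exponentially small error be written cleanly in terms of $y$ rather than in terms of $x = h_i^{-1}(y)$. If any care is required, it is only in confirming that the one-sided monotonicity inequalities go the right way for each of the two functions (they do, once the correct direction of the inequality $x \lessgtr y$ is established from $\tanh x < 1 < \coth x$).
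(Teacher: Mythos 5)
Your treatment of $h_1^{-1}$ is correct and is essentially the paper's argument: from $y = x\tanh x \le x$ and monotonicity you get $x = y\coth x \le y\coth y = y\left(1+O(e^{-2y})\right)$, and the squeeze $y \le x \le y\coth y$ finishes that half.

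The $h_2^{-1}$ half, however, contains a sign error that breaks the argument. From $x \le y$ and the fact that $\coth$ is decreasing you correctly get $\coth x \ge \coth y$, but dividing the positive quantity $y$ by the \emph{larger} denominator yields the \emph{smaller} quotient: $x = y/\coth x \le y/\coth y = y\tanh y$, not $\ge$. The inequality you assert is in fact false: at $x=1$ one has $y=\coth 1 \approx 1.313$ and $y\tanh y \approx 1.136 > 1 = x$. After correcting the direction you are left with only upper bounds on $x$ (namely $x \le y\tanh y \le y$) and no lower bound at all, yet the lower bound is the entire content here, since $h_2^{-1}(y) < y$ and the claim is precisely that $x$ is not much smaller than $y$. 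The obstruction is structural: to lower-bound $x = y\tanh x$ you need a lower bound on $\tanh x$, which needs a lower bound on $x$, so a single monotonicity step cannot close the loop. The paper escapes the circle by observing that $y = x\coth x = x\left(1+O(e^{-2x})\right)$ forces $y - x \to 0$, whence $e^{-2x} = e^{2(y-x)}e^{-2y} = O(e^{-2y})$ and $x = y\left(1+O(e^{-2y})\right)$. Alternatively a two-step bootstrap works: a crude bound such as $x \ge y\tanh 1$ for $x \ge 1$ gives $\tanh x \ge 1 - O(e^{-cy})$ for some $c>0$, hence $x \ge y - o(1)$, and feeding that back into $x = y\tanh x$ gives the sharp rate. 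Either way, you need one more step than you wrote.
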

\begin{proof}
If $y=h_1(x)=x \tanh x \leq x$ then $x=y \coth x = y \left( 1 + O(e^{-2x}) \right) = y \left( 1 + O(e^{-2y}) \right)$. 

If $y=h_2(x)=x \coth x = x \left( 1 + O(e^{-2x}) \right)$ then $y-x \to 0$ as $x \to \infty$, and so $x=y \left( 1 + O(e^{-2y}) \right)$. 
\end{proof}
\begin{lemma}[Derivative comparison]\label{derivcomparison}
For all $y>0$, we have 
\begin{equation} \label{Hderiv}
\go^\prime(y) > \gt^\prime(y) 
\end{equation}
and 
\begin{equation} \label{inversederiv}
\frac{d\ }{dy} \big( g_2^{-1}(y)^2 - g_1^{-1}(y)^2 \big) > 0 .
\end{equation}
\end{lemma}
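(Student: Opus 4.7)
The plan is to reduce both inequalities to the geometric observation that, for $y>0$, the preimages $x_1 = g_1^{-1}(y)$ and $x_2 = g_2^{-1}(y)$ satisfy $x_1 < \pi/2 < x_2$. This follows from the definitions: $g_1 \colon (0,\pi/2) \to (0,\infty)$ by hypothesis, while $g_2(x) = -x\cot x$ is negative on $(0,\pi/2)$, zero at $\pi/2$, and positive on $(\pi/2,\pi)$, so $y>0$ forces $x_2 \in (\pi/2,\pi)$. In particular $x_1 < x_2$, which is the one nontrivial input.

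The other ingredient is a common simplified form for $g_i'(x_i)$. Using $\tan x_1 = y/x_1$, $\cot x_2 = -y/x_2$, together with $\sec^2 = 1 + \tan^2$ and $\csc^2 = 1 + \cot^2$, a routine calculation from $g_1'(x) = \tan x + x\sec^2 x$ and $g_2'(x) = -\cot x + x\csc^2 x$ yields the matching formulas
\[
g_1'(x_1) = \frac{x_1^2 + y + y^2}{x_1}, \qquad g_2'(x_2) = \frac{x_2^2 + y + y^2}{x_2}.
\]

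To prove \eqref{inversederiv} I would differentiate $g_i^{-1}(y)^2 = x_i^2$ via $dx_i/dy = 1/g_i'(x_i)$ to obtain
\[
\frac{d}{dy}\bigl(g_i^{-1}(y)^2\bigr) = \frac{2 x_i^2}{x_i^2 + y + y^2} = 2 - \frac{2(y+y^2)}{x_i^2 + y + y^2}.
\]
For each fixed $y>0$ this is a strictly increasing function of $x_i > 0$, so $x_1 < x_2$ delivers \eqref{inversederiv} immediately.

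For \eqref{Hderiv}, I would write $G_i(y) = x_i/y$ and compute $G_i'(y) = 1/(y\, g_i'(x_i)) - x_i/y^2$, so that
\[
G_1'(y) - G_2'(y) = \frac{1}{y}\left(\frac{1}{g_1'(x_1)} - \frac{1}{g_2'(x_2)}\right) + \frac{x_2 - x_1}{y^2}.
\]
Substituting the formulas above and clearing the positive common denominator $y^2(x_1^2+y+y^2)(x_2^2+y+y^2)$, the numerator becomes $(x_2 - x_1)$ times
\[
y\bigl(x_1 x_2 - y - y^2\bigr) + (x_1^2 + y + y^2)(x_2^2 + y + y^2).
\]
Expanding the product and cancelling the $-y^2 - y^3$ against pieces of $(y+y^2)^2$ reduces this bracket to
\[
x_1^2 x_2^2 + y\, x_1 x_2 + (y+y^2)(x_1^2 + x_2^2) + y^3 + y^4,
\]
which is manifestly positive. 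Combined with $x_2 > x_1$, this gives \eqref{Hderiv}. The only real challenge is bookkeeping in this last cancellation; no deeper idea is needed beyond $x_1 < x_2$.
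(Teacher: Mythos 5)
Your proposal is correct. I verified the key formula $g_i'(x_i) = (x_i^2+y+y^2)/x_i$ (it follows from $\tan x_1 = y/x_1$, $\cot x_2 = -y/x_2$ and the Pythagorean identities), and the final expansion
\[
y\bigl(x_1x_2 - y - y^2\bigr) + (x_1^2+y+y^2)(x_2^2+y+y^2) = x_1^2x_2^2 + yx_1x_2 + (y+y^2)(x_1^2+x_2^2) + y^3 + y^4
\]
does check out, so positivity is manifest and \eqref{Hderiv} follows from $x_2>x_1$. For \eqref{inversederiv} your argument coincides with the paper's: both reduce to comparing $g_i'(x_i)/x_i = 1 + (y+y^2)/x_i^2$ at $x_1 < x_2$. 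Where you genuinely diverge is in \eqref{Hderiv}. The paper keeps the trigonometric form, reduces the inequality to $\frac{2x_1}{1+\sinc 2x_1} < \frac{2x_2}{1-\sinc 2x_2}$, and then proves that $x/(1+|\sinc x|)$ is increasing on $(0,2\pi)$ --- an argument that uses the signs of $\sinc$ on $(0,\pi)$ versus $(\pi,2\pi)$, i.e., the full strength of $x_1<\pi/2<x_2$, and that requires a separate (only sketched) case for $x\in(\pi,2\pi)$. Your route eliminates the trigonometry entirely by substituting $y$ back in, reducing the whole lemma to polynomial positivity plus the single input $x_1<x_2$; the price is the bookkeeping in the final expansion, but the payoff is a unified, case-free treatment of both inequalities.
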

\begin{proof}
By the derivative formula \eqref{Hprime}, the conclusion $\go^\prime(y) > \gt^\prime(y)$ is equivalent to 
\[
\left. \frac{1}{g_1^\prime(x)} \left( \frac{x}{g_1(x)} \right)^{\! \prime} \right|_{x_1=g_1^{-1}(y)}
> 
\left. \frac{1}{g_2^\prime(x)} \left( \frac{x}{g_2(x)} \right)^{\! \prime} \right|_{x_2=g_2^{-1}(y)} ,
\]
which evaluates to 
\[
\frac{2 \cot^2 x_1}{2x_1 + \sin 2x_1} < \frac{2 \tan^2 x_2}{2x_2 - \sin 2x_2} .
\]
Multiply on the left by $y^2 = g_1(x_1)^2 = x_1^2 \tan^2 x_1$ and on the right by $y^2 = g_2(x_2)^2 = x_2^2 \cot^2 x_2$, and hence obtain that the desired inequality is equivalent to 
\[
\frac{2 x_1^2}{2x_1 + \sin 2x_1} < \frac{2 x_2^2}{2x_2 - \sin 2x_2} ,
\]
or 
\[
\frac{2x_1}{1 + \sinc 2x_1} < \frac{2x_2}{1 - \sinc 2x_2} .
\]
Recall that $2x_1 \in (0,\pi)$ and $2x_2 \in (\pi,2\pi)$, so that $\sinc 2x_1$ is positive and $\sinc 2x_2$ is negative. Thus it suffices to show that the function $x/(1+|\sinc x|)$ is strictly increasing when $x \in (0,2\pi)$. This last fact is easily verified, since
\[
\left( \frac{x}{1+\sinc x} \right)^{\! \prime} = \frac{x(1-\cos x)+ 2\sin x}{x(1+\sinc x)^2} > 0
\]
for $x \in (0,\pi)$, and one can argue similarly when $x \in (\pi,2\pi)$.

Next, formula \eqref{inversederiv} says
\[
g_2^{-1}(y) (g_2^{-1})^\prime(y) > g_1^{-1}(y) (g_1^{-1})^\prime(y) .
\]
Again let $x_1=g_1^{-1}(y) \in (0,\pi/2)$ and $x_2=g_2^{-1}(y) \in (\pi/2,\pi)$. Since $g_1^\prime>0$ and $g_2^\prime>0$, the preceding inequality is equivalent to 
\[
\frac{g_1^\prime(x_1)}{x_1} > \frac{g_2^\prime(x_2)}{x_2} .
\]
Substituting the definitions of $g_1$ and $g_2$ and using the identities $\sec^2=\tan^2+1$ and $\csc^2=\cot^2+1$ now reduces the inequality to 
\[
\frac{y+y^2}{x_1^2} + 1 > \frac{y+y^2}{x_2^2} + 1,
\]
which certainly holds true since $x_1<\pi/2<x_2$.
\end{proof}

\begin{lemma}[More derivative comparison]\label{derivcomparisonneg}\ 

(i) 
\begin{equation} \label{inversederivneg1}
\frac{d\ }{dy} \big( g_2^{-1}(y)^2 + h_1^{-1}(-y)^2 \big) > 0 , \qquad -1<y<0 .
\end{equation}
(ii) 
\begin{equation} \label{inversederivneg2}
\frac{d\ }{dy} \big( -h_2^{-1}(y)^2 + h_1^{-1}(y)^2 \big) < 0 , \qquad 1<y<\infty .
\end{equation}
\end{lemma}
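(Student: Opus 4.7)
The plan is to reduce each inequality to an elementary rational-function identity by implicit differentiation combined with substitutions coming from the defining equations. The key tool is the formula $\frac{d}{dy}\bigl(h^{-1}(y)^2\bigr) = 2h^{-1}(y)/h'(h^{-1}(y))$ used already in the proof of \autoref{monotonicity}.

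For part (i), write $u=-y\in(0,1)$ and set $x_1=h_1^{-1}(u)>0$ and $x_2=g_2^{-1}(-u)\in(0,\pi/2)$, so that
\[
u = x_1 \tanh x_1 = x_2 \cot x_2 .
\]
Computing $g_2'(x_2)=(2x_2-\sin 2x_2)/(2\sin^2 x_2)$ and $h_1'(x_1)=(\sinh 2x_1+2x_1)/(2\cosh^2 x_1)$, the claim \eqref{inversederivneg1} reduces to
\[
\frac{x_1 \cosh^2 x_1}{\sinh 2x_1 + 2x_1} \;<\; \frac{x_2 \sin^2 x_2}{2x_2 - \sin 2x_2} .
\]
Using $x_1 = u\coth x_1$ and $x_2 = u\tan x_2$ to clear $x_1,x_2$ from both numerator and denominator, and writing $s=\sinh^2 x_1$, $t=\sin^2 x_2$, each side becomes rational: the left is $u(1+s)/[2(s+u)]$ and the right is $ut/[2(u+t-1)]$, with positivity $u+t-1>0$ following from the elementary $2x_2>\sin 2x_2$. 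Cross-multiplying then gives
\[
\frac{x_2 \sin^2 x_2}{2x_2 - \sin 2x_2} - \frac{x_1 \cosh^2 x_1}{\sinh 2x_1 + 2x_1} \;=\; \frac{u(1-u)(1-t+s)}{2(s+u)(u+t-1)},
\]
which is positive because $u\in(0,1)$ and $1-t+s=\cos^2 x_2+\sinh^2 x_1>0$.

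For part (ii), let $x_1=h_1^{-1}(y)$ and $x_2=h_2^{-1}(y)$ with $y>1$, so $y=x_1\tanh x_1=x_2\coth x_2$. The same differentiation, with $h_2'(x_2)=(\sinh 2x_2-2x_2)/(2\sinh^2 x_2)$, reduces \eqref{inversederivneg2} to
\[
\frac{x_1 \cosh^2 x_1}{\sinh 2x_1 + 2x_1} \;<\; \frac{x_2 \sinh^2 x_2}{\sinh 2x_2 - 2x_2} .
\]
Substituting $x_1 = y\coth x_1$, $x_2 = y\tanh x_2$, and writing $s=\sinh^2 x_1$, $r=\cosh^2 x_2$, the sides become $y(1+s)/[2(s+y)]$ and $y(r-1)/[2(r-y)]$, with $r>y$ following from $\sinh 2x_2>2x_2$. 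Cross-multiplying yields
\[
\frac{x_2 \sinh^2 x_2}{\sinh 2x_2 - 2x_2} - \frac{x_1 \cosh^2 x_1}{\sinh 2x_1 + 2x_1} \;=\; \frac{y(r+s)(y-1)}{2(s+y)(r-y)} \;>\; 0,
\]
which completes the argument.

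The only real obstacle is spotting the right substitutions; once one uses the defining identities $x_i\tanh x_i=\cdots$ etc.\ to clear $x_i$ from numerators and denominators, both sides collapse to rational functions in $s,t$ (respectively $s,r$) and the parameter, and the conclusion reduces to a one-line cross-multiplication identity. I do not foresee any further difficulty.
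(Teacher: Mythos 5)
Your proof is correct; I checked the differentiation, the substitutions, and both cross-multiplication identities ($u(1-u)(1-t+s)$ and $(r+s)(y-1)$), and they all hold, with the sign conditions $u+t-1>0$ and $r-y>0$ justified exactly as you say. Your opening move is the same as the paper's: differentiate implicitly to reduce each claim to a comparison of $x_i/h'(x_i)$-type quantities, then use the defining relations $y=\pm x_i\tanh x_i$, etc., to eliminate the transcendental dependence. Where you diverge is in the endgame. The paper writes each side of the reduced inequality in the form $1\mp(y+y^2)/x_i^2$ (part (i)) or $1+(y-y^2)/x_i^2$ (part (ii)), so that part (i) follows instantly from $y+y^2<0$, while part (ii) reduces to $x_2<x_1$ and therefore needs the extra input $h_2^{-1}(y)<h_1^{-1}(y)$ (the paper's inequality \eqref{h1h2}, coming from $h_1<h_2$). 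Your version instead collapses everything to rational functions of $s=\sinh^2 x_1$ and $t=\sin^2 x_2$ (resp.\ $r=\cosh^2 x_2$) and exhibits the difference as an explicitly factored positive quantity; this is slightly more computational in part (i) but has the advantage in part (ii) of not requiring the comparison $x_2<x_1$ at all --- the factor $(r+s)(y-1)$ does the work directly. Both routes are elementary and of comparable length; the paper's is a touch slicker in (i), yours is a touch more self-contained in (ii).
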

\begin{proof}
(i) Let $x_2=g_2^{-1}(y)$ and $x_1=h_1^{-1}(-y)$. 
Formula \eqref{inversederivneg1} holds if and only if
\[
g_2^{-1}(y) (g_2^{-1})^\prime(y) > h_1^{-1}(-y) (h_1^{-1})^\prime(-y) .
\]
Since $g_2^\prime>0$ and $h_1^\prime>0$, the inequality is equivalent to 
\[
\frac{h_1^\prime(x_1)}{x_1} > \frac{g_2^\prime(x_2)}{x_2} .
\]
Substituting the definitions of $h_1$ and $g_2$ and using the identities $\sech^2=1-\tanh^2$ and $\csc^2=\cot^2+1$, and recalling $y=-h_1(x_1)=g_2(x_2)$, the inequality simplifies to 
\[
- \frac{y+y^2}{x_1^2} + 1 > \frac{y+y^2}{x_2^2} + 1,
\]
which is true since $-1<y<0$ implies $y+y^2<0$.

(ii) Let $x_1=h_1^{-1}(y)$ and $x_2=h_2^{-1}(y)$. 
Formula \eqref{inversederivneg2} holds if and only if
\[
h_2^{-1}(y) (h_2^{-1})^\prime(y) > h_1^{-1}(y) (h_1^{-1})^\prime(y) .
\]
Since $h_1^\prime>0$ and $h_2^\prime>0$, the inequality is equivalent to 
\[
\frac{h_1^\prime(x_1)}{x_1} > \frac{h_2^\prime(x_2)}{x_2} .
\]
Substituting the definitions of $h_1$ and $h_2$ and using the identities $\sech^2=1-\tanh^2$ and $\csch^2=\coth^2-1$, the inequality simplifies to 
\[
\frac{y-y^2}{x_1^2} + 1 > \frac{y-y^2}{x_2^2} + 1 .
\]
Note $1<y<\infty$ implies $y-y^2<0$, and so the task is to show $x_2 < x_1$, or in other words
\begin{equation}\label{h1h2}
h_2^{-1}(y) < h_1^{-1}(y) , \qquad y > 1 .
\end{equation}
This inequality holds since $h_1$ and $h_2$ are increasing and $h_1(x)<h_2(x)$ for all $x$.
\end{proof}
\begin{lemma}[More monotonicity]\label{specific}
(i) The function
\[
f_1(x) = \frac{1}{h_1(x)} - \frac{x h_1^\prime(x)}{2h_1(x)^2}
\]
is strictly decreasing, with $f_1(x) \to 1/3$ as $x \to 0+$ and $f_1(x) \to 0$ as $x \to \infty$. Further, $h_1(f_1^{-1}(w))<1/2w$ for all $w \in (0,1/3)$. 

(ii) Similarly
\[
f_2(x) = \frac{1}{h_2(x)} - \frac{x h_2^\prime(x)}{2h_2(x)^2}
\]
is strictly decreasing, with $f_2(x) \to 1$ as $x \to 0+$ and $f_2(x) \to 0$ as $x \to \infty$. Further, $f_1(x)<f_2(x)$ for all $x>0$, and $h_2(f_2^{-1}(w))<1/w$ for all $w \in (0,1)$, and $h_1(f_1^{-1}(w))+h_2(f_2^{-1}(w))<1/w$ for all $w \in (0,1/3)$. 

\end{lemma}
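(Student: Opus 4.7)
The plan is to obtain closed hyperbolic expressions for $f_1$ and $f_2$, after which the limits, strict monotonicity, and all the upper bounds flow from a series comparison together with a single identity relating the logarithmic derivatives of $h_1$ and $h_2$. Direct simplification using $h_1(x)=x\tanh x$ and $h_2(x)=x\coth x$ yields
\[
f_1(x) = \frac{\sinh 2x - 2x}{4x \sinh^2 x}, \qquad f_2(x) = \frac{\sinh 2x + 2x}{4x \cosh^2 x}.
\]
Taylor expansion around $x=0$ gives $f_1(x) \to 1/3$ and $f_2(x) \to 1$, while the dominant exponentials give $f_1(x), f_2(x) \sim 1/(2x) \to 0$ as $x\to\infty$. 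For strict monotonicity I set $z = 4x^2$ and rewrite
\[
f_1 = \frac{\sum_{k \geq 0} z^k/(2k+3)!}{\sum_{k \geq 0} z^k/(2k+2)!}, \qquad
f_2 = \frac{2 + \sum_{k \geq 1} z^k/(2k+1)!}{2 + \sum_{k \geq 1} z^k/(2k)!}.
\]
In each case the ratio $a_k/b_k$ of corresponding coefficients equals $1/(2k+3)$, respectively $1/(2k+1)$ (the latter formula covering the $k=0$ entry as well, since $2/2=1$), and is strictly decreasing in $k$. A standard pairing of $(j,k)$ and $(k,j)$ terms in $(A/B)' = (A'B - AB')/B^2$ then shows the numerator is negative, so $f_1$ and $f_2$ are strictly decreasing in $z$, hence in $x$.

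For the individual upper bounds I use the identity $f_i(x)h_i(x) + \frac{x h_i'(x)}{2 h_i(x)} = 1$, which after setting $w = f_i(x)$ rearranges to
\[
h_i(x) = \frac{1}{w} - \frac{x h_i'(x)}{2 w h_i(x)}.
\]
The bound $h_1(f_1^{-1}(w)) < 1/(2w)$ reduces to $x h_1'(x) > h_1(x)$, i.e.\ $x^2\sech^2 x > 0$; similarly $h_2(f_2^{-1}(w)) < 1/w$ reduces to $xh_2'(x) > 0$, which is the monotonicity of $h_2$. The inequality $f_1 < f_2$ follows from cross-multiplying the closed forms and simplifying to $2x\cosh 2x - \sinh 2x > 0$, which is $h_2(2x) > 1$.

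The main obstacle is the joint bound $h_1(f_1^{-1}(w)) + h_2(f_2^{-1}(w)) < 1/w$, for which the two individual bounds only yield $3/(2w)$. The key observation is the exact identity
\[
\frac{x h_1'(x)}{h_1(x)} + \frac{x h_2'(x)}{h_2(x)} = \Bigl(1 + \frac{2x}{\sinh 2x}\Bigr) + \Bigl(1 - \frac{2x}{\sinh 2x}\Bigr) = 2, \qquad x > 0,
\]
combined with the fact that $2x/\sinh 2x$ is strictly decreasing (since $\sinh 2x < 2x\cosh 2x$), so the first summand is strictly decreasing and the second strictly increasing in $x$. Setting $x_i = f_i^{-1}(w)$, the inequality $f_1 < f_2$ and the decreasing nature of both functions give $x_1 < x_2$, and then
\[
\frac{x_1 h_1'(x_1)}{h_1(x_1)} + \frac{x_2 h_2'(x_2)}{h_2(x_2)} > \frac{x_1 h_1'(x_1)}{h_1(x_1)} + \frac{x_1 h_2'(x_1)}{h_2(x_1)} = 2.
\]
Summing the two instances of the identity for $h_i(x_i)$ and inserting this strict lower bound yields $h_1(x_1) + h_2(x_2) < 2/w - 2/(2w) = 1/w$, as required.
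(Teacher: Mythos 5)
Your proof is correct. Most of it coincides in substance with the paper's argument: the closed hyperbolic forms for $f_1$ and $f_2$, the limits, the rearrangement $h_i(x) = \tfrac{1}{w}\bigl(1 - \tfrac{x h_i'(x)}{2h_i(x)}\bigr)$, the reduction of the individual bounds to $x h_1'(x) > h_1(x)$ and $h_2'(x) > 0$, and the reduction of $f_1 < f_2$ to $\sinh 2x < 2x\cosh 2x$ are all there. Your treatment of the joint bound is also the paper's in disguise: the paper writes $h_1(x) = \tfrac{1}{2f_1(x)}\bigl(1 - \tfrac{1}{\sinch 2x}\bigr)$ and $h_2(x) = \tfrac{1}{2f_2(x)}\bigl(1 + \tfrac{1}{\sinch 2x}\bigr)$, adds them, and uses $f_1^{-1}(w) < f_2^{-1}(w)$ together with the monotonicity of $\sinch$; your identity $\tfrac{x h_1'}{h_1} + \tfrac{x h_2'}{h_2} = 2$ combined with the strict decrease of $2x/\sinh 2x$ is exactly that computation in logarithmic-derivative notation. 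The one step where you genuinely diverge is the strict monotonicity of $f_1$ and $f_2$: the paper differentiates, and for $f_1$ shows the numerator $4x^2\coth x - \sinh 2x - 2x$ is negative by noting it vanishes at $0$ and has derivative $-4(\cosh x - x\csch x)^2 < 0$ (for $f_2$ the sign of the numerator is immediate), whereas you express each function as a ratio of power series in $z = 4x^2$ whose coefficient ratios $1/(2k+3)$, respectively $1/(2k+1)$, are strictly decreasing, and invoke the standard $(j,k)$--$(k,j)$ pairing in $A'B - AB'$. Both are valid; your version treats the two functions uniformly and avoids the somewhat magical perfect-square factorization, at the cost of having to state (and, in a fully written-out proof, verify) the monotone-ratio lemma for power series with positive coefficients.
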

\begin{proof}
(i) Direct calculation with $h_1(x)=x \tanh x$ gives that 
\[
f_1(x) = \frac{\coth x - x \csch^2 x}{2x} 
\]
and so $f_1(0+)=1/3$ by elementary series expansions, with $\lim_{x \to \infty} f_1(x)=0$. One computes 
\[
f_1^\prime(x) = \frac{4x^2 \coth x - \sinh 2x - 2x}{4x^2 \sinh^2 x} .
\]
We will show the numerator is negative, so that $f_1$ is strictly decreasing. 

Notice $4x^2 \coth x - \sinh 2x - 2x = 0$ at $x=0$. Thus it suffices to show the first derivative is negative for all $x>0$, which is clear because
\[
(4x^2 \coth x - \sinh 2x - 2x)^\prime
= -4 (\cosh x - x \csch x)^2 <  0 .
\]

For the second claim in part (i), rearrange the definition of $f_1$ to get that 
\begin{equation} \label{h1f1}
h_1(x) = \frac{1}{f_1(x)} \left( 1 - \frac{x h_1^\prime(x)}{2h_1(x)} \right) = \frac{1}{2f_1(x)} \left( 1 - \frac{1}{\sinch 2x} \right) 
\end{equation}
by substituting $h_1(x)=x \tanh x$. Hence $h_1(x) < 1/2f_1(x)$, and so $h_1(f_1^{-1}(w)) < 1/2w$ for all $w$ in the range of $f_1$, which is $(0,1/3)$. 

(ii) Substituting $h_2(x)=x \coth x$ into the definition of $f_2$ gives that 
\[
f_2(x) = \frac{\tanh x + x \sech^2 x}{2x} ,
\]
from which one evaluates the limit as $f_2(0+)=1$, and obviously $\lim_{x \to \infty} f_2(x)=0$. Differentiating, we find
\[
f_2^\prime(x) = - \frac{4x^2 \tanh x + \sinh 2x - 2x}{4x^2 \cosh^2 x} < 0 ,
\]
so that $f_2$ is strictly decreasing. 

Further, the inequality $f_1(x)<f_2(x)$ holds when $x>0$ because it is equivalent to $\tanh 2x < 2x$, by manipulating the formulas above for $f_1(x)$ and $f_2(x)$. 

For the final claims in part (ii) of the lemma, rearrange the definition of $f_2$ so as to express $h_2$ in terms of $f_2$:
\begin{equation} \label{h2f2}
h_2(x) = \frac{1}{f_2(x)} \left( 1 - \frac{x h_2^\prime(x)}{2h_2(x)} \right) = \frac{1}{2f_2(x)} \left( 1 + \frac{1}{\sinch 2x} \right) 
\end{equation}
by substituting $h_2(x)=x \coth x$. The middle part of formula \eqref{h2f2} implies $h_2(x) < 1/f_2(x)$, and so $h_2(f_2^{-1}(w)) < 1/w$ for all $w$ in the range of $f_2$, that is, for all $w \in (0,1)$. 

By adding formulas \eqref{h1f1} and \eqref{h2f2} we deduce
\[
h_1(f_1^{-1}(w))+h_2(f_2^{-1}(w)) = \frac{1}{2w} \left( 2 - \frac{1}{\sinch 2f_1^{-1}(w)} + \frac{1}{\sinch 2f_2^{-1}(w)} \right) 
\]
for all $w \in (0,1/3)$. This last expression is less than $1/w$ since $f_1^{-1}(w) < f_2^{-1}(w)$, using here that $f_1$ and $f_2$ are decreasing with $f_1<f_2$. 
\end{proof}

Next we examine situations where $\h(y)$ is strictly convex with respect to $\log y$. 
\begin{lemma}[Convexity with respect to $\log y$]\label{convexity}
The functions $\go(y)$ and $\ho(y)$ are strictly convex with respect to $\log y$, with 
\[
\frac{d^2\ }{dz^2} \go(e^z) > 0 \quad \text{and} \quad \frac{d^2\ }{dz^2} \ho(e^z) > 0 , \qquad z \in \R .
\]
\end{lemma}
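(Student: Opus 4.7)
The plan is to reduce each convexity statement to monotonicity of a single-variable function via the inverse-function parametrization. Writing $y = g_1(x) = x \tan x$ gives $\go(y) = g_1^{-1}(y)/y = \cot x$, and $y = h_1(x) = x \tanh x$ gives $\ho(y) = \coth x$. Setting $z = \log y$, convexity of $\go(e^z)$ in $z$ is equivalent to the first derivative $\frac{d}{dz} \go(e^z) = y \go^\prime(y)$ being strictly increasing in $z$, and since $y$ is a strictly increasing function of $x$, this in turn is equivalent to $y \go^\prime(y)$ being strictly increasing as a function of $x$. The same reduction applies to $\ho$.

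Next I would obtain clean formulas in $x$. Differentiating the identity $y \go(y) = g_1^{-1}(y) = x$ with respect to $y$ gives $\go(y) + y \go^\prime(y) = 1/g_1^\prime(x)$. Substituting $g_1^\prime(x) = \tan x + x \sec^2 x$ and simplifying yields
\[
y \go^\prime(y) = -\frac{x \cot x}{x + \sin x \cos x} =: -\phi(x), \qquad x \in (0, \pi/2),
\]
and, by an analogous calculation using $h_1^\prime(x) = \tanh x + x \sech^2 x$,
\[
y \ho^\prime(y) = -\frac{x \coth x}{x + \sinh x \cosh x} =: -\psi(x), \qquad x \in (0,\infty).
\]
Hence the task reduces to proving $\phi^\prime < 0$ on $(0,\pi/2)$ and $\psi^\prime < 0$ on $(0,\infty)$. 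A routine logarithmic-derivative computation, followed by the substitution $w = 2x$, produces the compact forms
\[
\tfrac{1}{2}(\log \phi)^\prime = \frac{1}{w} - \frac{1}{\sin w} - \frac{1 + \cos w}{w + \sin w}, \quad \tfrac{1}{2}(\log \psi)^\prime = \frac{1}{w} - \frac{1}{\sinh w} - \frac{1 + \cosh w}{w + \sinh w}.
\]

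The trigonometric case is then easy: for $w \in (0,\pi)$ the inequality $\sin w < w$ makes $1/w - 1/\sin w$ strictly negative, while $(1 + \cos w)/(w + \sin w) \geq 0$, so the entire expression is strictly negative and $\phi$ is strictly decreasing. The hyperbolic case is the main obstacle, because now $\sinh w > w$ makes $1/w - 1/\sinh w$ strictly \emph{positive}, and I must prove that this positive quantity is dominated by $(1 + \cosh w)/(w + \sinh w)$. Cross-multiplying (all terms positive), the required inequality becomes
\[
\sinh^2 w - w^2 < w \sinh w \, (1 + \cosh w).
\]
I would establish this by rewriting the difference as $\sinh w \bigl(\sinh w - w(1 + \cosh w)\bigr) - w^2$ and invoking the half-angle identity $\sinh w - w(1 + \cosh w) = 2\cosh^2(w/2)\bigl(\tanh(w/2) - w\bigr)$, which is strictly negative since $\tanh(w/2) < w/2 < w$. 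This yields $(\log \psi)^\prime < 0$, hence $\psi^\prime < 0$, hence $y \ho^\prime(y)$ is strictly increasing in $x$, completing the convexity proof for $\ho(e^z)$.
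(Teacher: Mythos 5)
Your proof is correct, but it follows a genuinely different route from the paper's. The paper computes the second derivative head-on: it derives the general identity $\frac{d^2}{dz^2}\h(e^z) = -\frac{h(x)h''(x)}{h'(x)^3} - \frac{1}{h'(x)} + \frac{x}{h(x)}$ at $x=h^{-1}(y)$, substitutes $g_1$ and $h_1$, and checks positivity of the resulting explicit expressions; in the hyperbolic case this requires showing $f(x)=x^2-\sinh^2 x\cosh^2 x+2x\cosh^3 x\sinh x>0$ via the two-step test $f(0)=f'(0)=0$, $f''>0$. You instead stop at the first derivative: you show $\frac{d}{dz}\h(e^z)=y\h'(y)$ equals $-\phi(x)$ resp.\ $-\psi(x)$ in closed form (I verified $y\go'(y)=\frac{1}{g_1'(x)}-\frac{x}{g_1(x)} = -\frac{x\cot x}{x+\sin x\cos x}$ and its hyperbolic twin), then kill the problem with a logarithmic derivative and the elementary inequalities $\sin w<w<\sinh w$ and $\tanh(w/2)<w$; your reduction of the hyperbolic case to $\sinh^2 w-w^2<w\sinh w(1+\cosh w)$ and the half-angle factorization are both correct. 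What your approach buys is a genuinely more elementary argument --- no second derivatives of auxiliary functions, only sign checks of first-order quantities --- while the paper's approach buys uniformity, since its formula \eqref{secondderiv} is structurally parallel to \eqref{bigrightside}, which is reused throughout \autoref{convexityyy}, \autoref{concavityH1} and \autoref{concavityhtwo}. One small presentational point: the lemma asserts the pointwise inequality $\frac{d^2}{dz^2}\h(e^z)>0$, not merely strict convexity; your argument does deliver this, because $(\log\phi)'<0$ and $(\log\psi)'<0$ hold at every point and the chain-rule factor $dx/dz = y/h'(x)$ is positive, but it would be worth adding that one sentence.
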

\begin{proof}
A straightforward calculation shows 
\begin{equation}\label{secondderiv}
\frac{d^2\ }{dz^2} \h(e^z) = - \frac{h(x) h^{\prime \prime}(x)}{h^\prime(x)^3} - \frac{1}{h^\prime(x)} + \frac{x}{h(x)} ,
\end{equation}
whenever $H(y)=h^{-1}(y)/y$ and $y=e^z=h(x)$. The task is to show the right side of \eqref{secondderiv} is positive when $h$ is replaced by $g_1$ and also when it is replaced by $h_1$. Note $g_1^\prime>0$ and $h_1^\prime>0$, and so the denominators are positive in every case. 

When $h=g_1$, the right side of \eqref{secondderiv} equals
\[
\frac{8x \cot x}{(2x+\sin 2x)^3} \big(x^2-\sin^2 x \cos^2 x +2x \cos^3 x \sin x \big) ,
\]
which is positive because $x^2 \geq \sin^2 x$.

When $h=h_1$, the right side of \eqref{secondderiv} evaluates to 
\[
\frac{8x \coth x}{(2x+\sinh 2x)^3} f(x) 
\]
where $f(x)=x^2-\sinh^2 x \cosh^2 x + 2x \cosh^3 x \sinh x$. Notice $f(0)=0,f^\prime(0)=0$ and 
\[
f^{\prime \prime}(x) = 4 \cosh^2 x + 4 x \sinh x \cosh x (3\sinh^2 x + 5\cosh^2 x ) > 0 ,
\]
from which it follows that $f(x)>0$ for all $x>0$, completing the proof. 
\end{proof}
Convexity or concavity of functions in the form $y(1-y)\h(cy)^2$ will be important for our arguments too. 
\begin{lemma}[Convexity with $\go$ and $\gt$]\label{convexityyy} Fix $c>0$. 

(i) $y(1-y)\go(cy)^2$ is a strictly convex function of $y>0$. 

(ii) $y(1-y)\gt(cy)^2$ is a strictly convex function of $y>0$. 

(iii) $y(1-y)\gt(-cy)^2$ is a strictly decreasing function of $0<y<\min(1,1/c)$. 
\end{lemma}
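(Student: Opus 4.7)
My plan is to pass to an angular variable via the relevant inverse function and reduce each part to a trigonometric inequality.  In parts (i), (ii), (iii) respectively, set $x=g_1^{-1}(cy)\in(0,\pi/2)$, or $x=g_2^{-1}(cy)\in(\pi/2,\pi)$, or $x=g_2^{-1}(-cy)\in(0,\pi/2)$.  A direct substitution using $\go(y)=g_1^{-1}(y)/y$ and $\gt(y)=g_2^{-1}(y)/y$ collapses the three functions into
\[
F_{\rm(i)}(y)=\frac{x\cot x}{c}-\frac{x^2}{c^2},\ \ F_{\rm(ii)}(y)=-\frac{x\tan x}{c}-\frac{x^2}{c^2},\ \ F_{\rm(iii)}(y)=\frac{x\tan x}{c}-\frac{x^2}{c^2},
\]
where $y$ equals $x\tan x/c$, $-x\cot x/c$, $x\cot x/c$ in the three cases.

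For parts (i) and (ii), write $p(x)=x\tan x$ and $q(x)=-x\cot x$ formally, so that $g_1=p|_{(0,\pi/2)}$ and $g_2=q|_{(0,\pi)}$.  A standard chain-rule computation of $F''(y)$ shows the convexity condition $F''(y)>0$ is equivalent to
\[
(p''q'-p'q'')(x)+\tfrac{2}{c}\bigl(xp''(x)-p'(x)\bigr)>0\ \text{in (i)},\ \ (p'q''-p''q')(x)+\tfrac{2}{c}\bigl(xq''(x)-q'(x)\bigr)>0\ \text{in (ii)}.
\]
The $1/c$ summand is positive in each case: the proof of \autoref{concaveinverse} shows that $xg_1''(x)-g_1'(x)>0$ on $(0,\pi/2)$ and $xg_2''(x)-g_2'(x)>0$ on $(\pi/2,\pi)$.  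For the other summand, expansion using $2\sin x\cos x=\sin 2x$ and the double-angle identities produces the identity
\[
p'(x)q''(x)-p''(x)q'(x)\ =\ -\frac{\Psi(x)}{2\sin^3x\cos^3x},\qquad \Psi(x):=\cos 4x+x\sin 4x+4x^2-1.
\]
On $(0,\pi/2)$, $\sin^3x\cos^3x>0$, so (i) reduces to $\Psi>0$; on $(\pi/2,\pi)$, $\sin^3x\cos^3x<0$ (as $\cos x<0$), so (ii) also reduces to $\Psi>0$.

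Both parts thus come down to proving $\Psi>0$.  On $(0,\pi/2)$, $\Psi(0)=\Psi'(0)=0$ and
\[
\Psi''(x)=16\sin 2x\,\bigl(\sin 2x-2x\cos 2x\bigr),
\]
with both factors strictly positive on $(0,\pi/2)$ (the bracketed factor vanishes at $0$ with derivative $4x\sin 2x>0$), so $\Psi''>0$ and hence $\Psi>0$.  On $(\pi/2,\pi)$ the crude bound $\Psi(x)\ge 4x^2-x-2$ suffices, since $4x^2-x-2>0$ for $x\ge(1+\sqrt{33})/8\approx 0.84$.  I expect the main obstacle to be sign bookkeeping through the identity for $p'q''-p''q'$ rather than either positivity argument.

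Part (iii) is easier, involving only a first derivative.  The chain rule applied to $F_{\rm(iii)}=x\tan x/c-x^2/c^2$ with $y=x\cot x/c$ yields
\[
\frac{dF_{\rm(iii)}}{dy}=-\frac{c\,g_1'(x)-2x}{c\,g_2'(x)},
\]
so strict decrease is equivalent to $c\,g_1'(x)>2x$.  The constraint $y<1$ translates to $c>x\cot x$, so
\[
c\,g_1'(x)>x\cot x\cdot(\tan x+x\sec^2 x)=x+\frac{2x^2}{\sin 2x}>2x,
\]
the final inequality because $\sin 2x<2x$ on $(0,\pi/2)$.
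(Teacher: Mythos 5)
Your proposal is correct and follows essentially the same route as the paper: the same change of variables $y \leftrightarrow x$, the same decomposition of the second derivative into a $1/c$-term controlled by the computation behind \autoref{concaveinverse} and a $c$-independent term whose positivity reduces to that of $f(x)=-1+4x^2+\cos 4x + x\sin 4x$ (your $\Psi$), and the same one-line derivative argument for part (iii). The only variation is your proof that $\Psi>0$ (second-derivative positivity on $(0,\pi/2)$ plus the crude bound $4x^2-x-2$ on $(\pi/2,\pi)$, versus the paper's first-derivative analysis with an alternating series), which is equally valid.
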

\begin{proof}
Part (i). Equivalently, we show strict convexity of $y(1-y/c)\go(y)^2$ for $y>0$. Direct differentiation gives
\[
\frac{1}{2} \frac{d^2\ }{dy^2} \big( y(1-y/c)\go(y)^2 \big) 
=  \frac{x^2}{g_1(x)^3} - \frac{2 x}{g_1(x)^2 g_1^\prime(x)} + \frac{1 - g_1(x)/c}{g_1(x) g_1^\prime(x)^2} - \frac{x (1- g_1(x)/c) g_1^{\prime \prime}(x)}{g_1(x) g_1^\prime(x)^3}
\]
where $y=g_1(x)$. The right side of this formula can be rewritten as
\begin{equation} \label{bigrightside}
\frac{x^2}{g_1(x)^3} - \frac{2 x}{g_1(x)^2 g_1^\prime(x)} + \frac{1}{g_1(x) g_1^\prime(x)^2} - \frac{x g_1^{\prime \prime}(x)}{g_1(x) g_1^\prime(x)^3} - \frac{1}{c} \left( \frac{g_1^\prime(x) - x g_1^{\prime \prime}(x)}{g_1^\prime(x)^3} \right) . 
\end{equation}
The parenthetical term in \eqref{bigrightside} is negative by \eqref{inverse-i}.

The other terms in \eqref{bigrightside} equal 
\[
\frac{2\cot^3 x}{(2x + \sin 2x)^3} f(x)
\]
where $f(x) = -1 + 4x^2 + \cos 4x + x \sin 4x$. To complete the proof we will show $f(x)$ is positive when $x>0$. Obviously $f(0)=0$, and so it suffices to show $f^\prime(x)>0$. One computes
\[
f^\prime(x) = 4x ( 2 + \cos 4x - 3 \sinc 4x) ,
\]
which is positive whenever $x>3/4$ because 
\[
2 + \cos 4x - 3 \sinc 4x \geq 1 - \frac{3}{4x} > 0.
\]
Further, power series expansions yield
\[
f^\prime(x) = 8x \sum_{k=2}^\infty (-1)^k \frac{k-1}{(2k+1)!} (4x)^{2k} .
\]
When $0 < x \leq 1$, the terms of this alternating series decrease in magnitude as $k$ increases, so that $f^\prime(x)>0$, completing this part of the proof. 

Part (ii). In formula \eqref{bigrightside} we replace $g_1(x)$ by $g_2(x)=-x \cot x$. The parenthetical term in \eqref{bigrightside} is then negative by \eqref{inverse-iib}, noting 
$x \in (\pi/2,\pi)$ because $g_2(x)=y>0$ in this part of the lemma. The other terms in \eqref{bigrightside} equal
\[
-\frac{2\tan^3 x}{(2x - \sin 2 x)^3} f(x)
\]
where $f(x)$ is the function used in part (i) of the proof. Since $f$ is positive, we see the last expression is positive when $\pi/2<x<\pi$, as we wanted to show.

Part (iii). Rescale by $c$ and consider $y(1-y/c)\gt(-y)^2$ for $0<y<\min(c,1)$. Differentiating gives
\[
\frac{d\ }{dy} \left( y(1-y/c)\gt(-y)^2 \right) = \frac{2x}{g_2^\prime(x)} \left( \frac{1}{c}  + \frac{1}{g_2(x)} - \frac{x g_2^\prime(x)}{2g_2(x)^2} \right) 
\]
where $y=-g_2(x)$. The right side is negative since $g_2^\prime(x)>0$ and $0<-g_2(x)<c$. 
\end{proof}
\begin{lemma}[Concavity with $\ho$]\label{concavityH1} If $0 < c \leq 3$ then $y(1-y)\ho(cy)^2$ is strictly decreasing and strictly concave for $y \in (0,1)$.

If $c>3$ then a number $y_1(c) \in (0,1/2)$ exists such that $y(1-y)\ho(cy)^2$ is:
\begin{itemize}
\item strictly increasing for $y \in \big(0, y_1(c)\big)$, 
\item strictly decreasing for $y \in \big(y_1(c),1\big)$, 
\item strictly concave for $y \in \big(y_1(c),1\big)$. 
\end{itemize}
\end{lemma}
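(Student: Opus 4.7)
The plan is to reparametrize $y \in (0,1)$ by $x \in (0, h_1^{-1}(c))$ via $cy = h_1(x) = x\tanh x$ and exploit the identity $h_1(x) h_2(x) = x^2$. Since $\ho(cy) = x/(cy)$,
\[
    F(y) := y(1-y)\ho(cy)^2 = \frac{(1-y)\,h_2(x)}{c} = \frac{h_2(x)}{c} - \frac{x^2}{c^2},
\]
and differentiating through $dy/dx = h_1'(x)/c$ gives $F'(y) = [ch_2'(x) - 2x]/[c h_1'(x)]$. The sign of $F'$ thus equals the sign of $c - q(x)$ where $q(x) := 2x/h_2'(x)$. The monotonicity claims of the lemma follow once I show $q$ is strictly increasing on $(0,\infty)$ with $q(0^+) = 3$ and $q(\infty) = \infty$: the inequality $h_2'(x) > x h_2''(x)$ (needed for $q'>0$), multiplied by $\sinh^2 x$, becomes $r(x) := \sinh x\cosh x + x - 2x^2\coth x > 0$, which I would prove from $r(0) = 0$ together with the pleasant identity
\[
    r'(x) = \frac{2(\sinh x\cosh x - x)^2}{\sinh^2 x} > 0 .
\]
For $c \leq 3$ this forces $F' < 0$ throughout; for $c > 3$, $q(x_1(c)) = c$ has a unique root, and $y_1(c) := h_1(x_1)/c < 1/2$ follows from substituting $c = 2x_1/h_2'(x_1)$ and the bound $h_2'(x) = \coth x - x\csch^2 x < \coth x$.

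A second differentiation yields $F''(y) = [cA(x) - 2B(x)]/h_1'(x)^3$ with $A := h_2'' h_1' - h_2' h_1''$ and $B := h_1' - x h_1''$. Positivity $B>0$ is \autoref{concaveinverse}(iii), while $A>0$ follows from the closed form
\[
    A(x) = \frac{4 N(x)}{\sinh^3 2x}, \qquad N(x) := x\sinh 4x - \cosh 4x + 4x^2 + 1,
\]
because $N(0) = N'(0) = N''(0) = 0$ and $N'''(x) = 16\cosh 4x \,(4x - \tanh 4x) > 0$ for $x > 0$, so three successive integrations give $N > 0$. Hence $F''(y) < 0$ is equivalent to $c < \phi(x) := 2B(x)/A(x)$. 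Differentiating $h_1 h_2 = x^2$ twice produces the identity $B = h_2' h_1'^2 + h_1 A/2$, which rewrites $\phi(x) = h_1(x) + 2 h_2'(x) h_1'(x)^2/A(x)$. A short Taylor calculation gives $\phi(0^+) = 15/4$, and $\phi(x) \to \infty$ as $x \to \infty$.

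Both cases of the lemma would then be handled by the single inequality $\phi(x) > q(x)$ on $(0,\infty)$: for $c \leq 3$ this yields $\phi(x) > q(x) > 3 \geq c$, and for $c > 3$, any $x > x_1(c)$ satisfies $q(x) > q(x_1) = c$ by strict monotonicity of $q$, so $\phi(x) > q(x) > c$. The main obstacle is proving $\phi > q$, which I expect to establish in the same spirit as $N > 0$ and $r > 0$ above: after clearing denominators and substituting the explicit hyperbolic expressions for $A$, $B$, $q$, the difference $\phi - q$ becomes a concrete combination of $x$, $\sinh$, $\cosh$ vanishing to appropriate order at $x = 0$, whose highest relevant derivative can hopefully be recognized as a manifestly positive square-or-exponential expression. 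A reassuring sanity check is that strict concavity at the critical point $y_1(c)$ is automatic: substituting $c = 2x_1/h_2'(x_1)$ into $cA - 2B$ reduces $F''(y_1) < 0$ precisely to $x h_2''(x) < h_2'(x)$ at $x = x_1$, i.e.\ to the same $r(x) > 0$ proved in the monotonicity step; and strict concavity near $y = 1$ is likewise free because $F''(y) \to -2 h_2'(x_0)/h_1'(x_0) < 0$ as $y \to 1^-$, with $x_0 = h_1^{-1}(c)$. The genuine technical heart of the proof is thus the global hyperbolic inequality $\phi > q$ linking these two automatic endpoints.
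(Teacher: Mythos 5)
Your reduction is correct and, perhaps surprisingly, lands on essentially the same skeleton as the paper's proof: your $q(x)=2x/h_2'(x)$ is exactly the reciprocal of the paper's auxiliary function $f_1(x) = \frac{1}{h_1(x)}-\frac{xh_1'(x)}{2h_1(x)^2}$ (one checks $f_1 = h_2'/2x$ directly), so your monotonicity step — $q$ increasing from $3$ to $\infty$, the root $x_1(c)$, and the bound $y_1(c)<1/2$ via $h_1(x)h_2'(x)<x\tanh x\coth x = x$ — reproduces \autoref{specific}(i) with a clean squared-derivative identity for $r'$ that matches the paper's computation. The reparametrization $F(y)=h_2(x)/c - x^2/c^2$ is a genuinely nicer way to organize the differentiation than the paper's formula \eqref{bigrightside}, and your verification that $A>0$ (via $N'''>0$) and $B>0$ (via \autoref{concaveinverse}(iii)) is sound.

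The problem is that the proof is not finished. The entire concavity assertion has been reduced to the inequality $\phi(x)>q(x)$, i.e.\ $B(x)h_2'(x) > xA(x)$, and for this you offer only the expectation that after clearing denominators the relevant derivative "can hopefully be recognized as a manifestly positive" expression. That inequality is precisely the technical heart of the lemma — it is not a routine endpoint check, and your observations that $\phi(0^+)=15/4>3=q(0^+)$ and that concavity holds automatically at $y_1(c)$ and near $y=1$ do not substitute for a global argument. For the record, the paper closes exactly this gap: after replacing $1/c$ by $f_1(x)=1/q(x)$ in the second-derivative formula (legitimate on the decreasing interval, where $f_1(x)<1/c$, because the coefficient $B/h_1'^3$ is positive), the resulting $c$-free upper bound for $\tfrac12 K''$ factors as a positive quantity times $\tilde f(x) = 2x^2 - \sinh^2 x - x\tanh x$, and $\tilde f<0$ follows from $\tilde f(0)=\tilde f'(0)=0$ together with $\tilde f''(x) = \frac{\sinh x}{2\cosh^3 x}(4x-\sinh 4x)<0$. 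In your notation this is exactly the missing inequality $\phi>q$, so your proposal becomes a complete proof once you supply an argument of this type; as written, it is not one.
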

To unify the two parts of this lemma, one simply defines $y_1(c)=0$ when $0 < c \leq 3$. 
\begin{proof}
After rescaling, we consider the function $K(y) = y(1-y/c)\ho(y)^2$ and show that if $0 < c \leq 3$ then $K(y)$ is strictly decreasing and strictly concave for $y \in (0,c)$, while if $c>3$ then a number $y_c \in (0,c/2)$ exists such that $K(y)$ is 
\begin{itemize}
\item strictly increasing for $y \in (0, y_c)$, 
\item strictly decreasing for $y \in (y_c,c)$, 
\item strictly concave for $y \in (y_c,c)$. 
\end{itemize}
The values $y_c$ and $y(c)$ are related by $y_c = c y_1(c)$. 

To begin with, differentiating the definition of $K$ yields
\[
K^\prime(y) = \frac{2x}{h_1^\prime(x)} \left( f_1(x) - \frac{1}{c} \right) 
\]
where $y=h_1(x)$ and 
\[
f_1(x) = \frac{1}{h_1(x)} - \frac{x h_1^\prime(x)}{2h_1(x)^2} , \qquad x>0 .
\]
\autoref{specific} says this function $f_1$ is strictly decreasing and has limiting values $f_1(0+)=1/3$ and $\lim_{x \to \infty} f_1(x)=0$, so that $0<f_1(x)<1/3$ for all $x>0$.

If $c \leq 3$ then $f_1(x)<1/c$ for all $x>0$, and so $K^\prime(y)<0$ for all $y>0$. Let $y_c=0$ in this case. 

If $c>3$ then $f_1(x_c) = 1/c$ for a unique number $x_c>0$. Letting $y_c=h_1(x_c)$, we have that $K^\prime(y)>0$ when $x<x_c$, that is, when $y<y_c$. Also, $K^\prime(y)<0$ when $y>y_c$. Further, $h_1(f_1^{-1}(1/c))<c/2$ by \autoref{specific}(i) (noting $1/c < 1/3$) and so $h_1(x_c) < c/2$, or $y_c<c/2$, as desired.

Concavity of $K(y)$ remains to be proved, when $y_c < y < c$, which is the interval on which $f_1(x)<1/c$. Formula \autoref{bigrightside} with $g_1$ replaced by $h_1$ shows that 
\[
\frac{1}{2} K^{\prime \prime}(y) = 
\frac{x^2}{h_1(x)^3} - \frac{2 x}{h_1(x)^2 h_1^\prime(x)} + \frac{1}{h_1(x) h_1^\prime(x)^2} - \frac{x h_1^{\prime \prime}(x)}{h_1(x) h_1^\prime(x)^3} - \frac{1}{c} \left( \frac{h_1^\prime(x) - x h_1^{\prime \prime}(x)}{h_1^\prime(x)^3} \right) . 
\]
The parenthetical term is positive by \eqref{inverse-iii}. Thus we may replace $1/c$ with the smaller number $f_1(x)$, getting an upper bound 
\begin{align*}
\frac{1}{2} K^{\prime \prime}(y) 
& < \frac{x^2}{h_1(x)^3} - \frac{2 x}{h_1(x)^2 h_1^\prime(x)} + \frac{1}{h_1(x) h_1^\prime(x)^2} - \frac{x h_1^{\prime \prime}(x)}{h_1(x) h_1^\prime(x)^3} - f_1(x) \left( \frac{h_1^\prime(x) - x h_1^{\prime \prime}(x)}{h_1^\prime(x)^3} \right) \\
& = \frac{2\cosh^2 x \coth^3 x}{x(2x + \sinh 2x)^2} \, \tilde{f}(x)
\end{align*}
by substituting $h_1(x)=x \tanh x$, where 
\[
\tilde{f}(x) = 2x^2-\sinh^2 x - x \tanh x .
\]
To show $K^{\prime \prime}(y)<0$ we want $\tilde{f}(x)<0$ for all $x>0$. This is true, since $\tilde{f}(0)=0,\tilde{f}^\prime(0)=0$ and 
\[
\tilde{f}^{\prime \prime}(x) = \frac{\sinh x}{2\cosh^3 x} (4x - \sinh 4x) < 0.
\]
\end{proof}
\begin{lemma}[Concavity with $\htwo$]\label{concavityhtwo} 
Let $c>0$. The function $y(1-y)\htwo(cy)^2$ is strictly concave for $y>1/c$, and is 
\begin{itemize}
\item strictly increasing for $y \in \big(1/c, y_2(c)\big)$, 
\item strictly decreasing for $y \in \big(y_2(c),\infty\big)$, 
\end{itemize}
for some number $y_2(c) \geq 1/c$. Furthermore, $y_1(c)+y_2(c)<1$ whenever $c>1$, where the number $y_1(c)$ was constructed in \autoref{concavityH1}.
\end{lemma}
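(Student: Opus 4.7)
My approach is to reparametrize by $z = h_2^{-1}(cy)$, so $cy = h_2(z) = z \coth z$ and $z$ ranges over $(0,\infty)$ as $y$ ranges over $(1/c,\infty)$. Using $\htwo(cy) = z/(cy)$ together with the identity $h_1(z) h_2(z) = z^2$, the function collapses to the pleasantly simple form
\[
y(1-y)\htwo(cy)^2 = \frac{1}{c}\Bigl( h_1(z) - \frac{z^2}{c} \Bigr).
\]
Differentiating via $dy/dz = h_2'(z)/c$, I would then obtain
\[
\frac{d}{dy}\bigl[ y(1-y)\htwo(cy)^2 \bigr] = \frac{h_1'(z) - 2z/c}{h_2'(z)} = \frac{2z}{h_2'(z)}\Bigl( f_2(z) - \frac{1}{c} \Bigr),
\]
where the last equality uses the identity $h_1'(z) = 2z f_2(z)$ that follows from the explicit formula $f_2(z) = (\tanh z + z\sech^2 z)/(2z)$ derived in \autoref{specific}. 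Since $f_2$ decreases strictly from $1$ to $0$, the monotonicity assertion follows: when $c \le 1$ the derivative is negative throughout, so $y_2(c) = 1/c$; when $c > 1$ the function changes from increasing to decreasing at $y_2(c) := h_2\bigl(f_2^{-1}(1/c)\bigr)/c > 1/c$.

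For the concavity, differentiating once more yields
\[
\frac{d^2}{dy^2}\bigl[ y(1-y)\htwo(cy)^2 \bigr] = c \cdot \frac{h_1''(z) h_2'(z) - h_1'(z) h_2''(z)}{h_2'(z)^3} - 2 \cdot \frac{h_2'(z) - z h_2''(z)}{h_2'(z)^3}.
\]
The last term is negative because $h_2'(z) - z h_2''(z) > 0$ by inequality \eqref{inverse-iv}, so strict concavity reduces to the single inequality $h_1''(z) h_2'(z) - h_1'(z) h_2''(z) < 0$ for every $z > 0$.

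This inequality is the main obstacle. Setting $a = \sinh z$ and $b = \cosh z$, a direct expansion of the two products using the formulas for $h_1',h_1'',h_2',h_2''$ collapses the left-hand side to
\[
h_1''(z) h_2'(z) - h_1'(z) h_2''(z) = \frac{2}{a^3 b^3}\bigl[ 2 a^2 b^2 - a b z (a^2 + b^2) - z^2 \bigr],
\]
and the bracket, after applying double-angle identities $2ab = \sinh 2z$, $a^2 + b^2 = \cosh 2z$, and $2\sinh 2z \cosh 2z = \sinh 4z$, reduces to $\tfrac{1}{16}\phi(4z)$ where
\[
\phi(w) = 4\cosh w - w \sinh w - w^2 - 4.
\]
To show $\phi(w) < 0$ for $w > 0$, I would verify that $\phi(0) = \phi'(0) = \phi''(0) = \phi'''(0) = 0$ and that $\phi^{(4)}(w) = -w \sinh w < 0$ for $w > 0$, then integrate four times from $0$ to conclude.

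For the final inequality $y_1(c) + y_2(c) < 1$ when $c > 1$, I would split into cases. When $1 < c \le 3$ one has $y_1(c) = 0$ by \autoref{concavityH1}, reducing the task to $h_2(f_2^{-1}(1/c)) < c$, which is immediate from $h_2(f_2^{-1}(w)) < 1/w$ in \autoref{specific}(ii) with $w = 1/c \in (0,1)$. When $c > 3$ both $y_1(c)$ and $y_2(c)$ are positive, and their sum equals $\bigl[ h_1(f_1^{-1}(1/c)) + h_2(f_2^{-1}(1/c)) \bigr]/c$; the conclusion follows from the combined bound $h_1(f_1^{-1}(w)) + h_2(f_2^{-1}(w)) < 1/w$ in \autoref{specific}(ii) with $w = 1/c \in (0,1/3)$.
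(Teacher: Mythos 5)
Your proposal is correct; every step checks out, including the algebra reducing $h_1''h_2'-h_1'h_2''$ to $\frac{2}{a^3b^3}\cdot\frac{1}{16}\phi(4z)$ and the fourfold differentiation showing $\phi<0$. Structurally it is the same proof as the paper's: monotonicity is reduced to the sign of $f_2(z)-1/c$ (the paper computes $K'(y)=\frac{2x}{h_2'(x)}\big(f_2(x)-\frac{1}{c}\big)$ directly), the second derivative is split into a piece that is the second derivative of a multiple of $-(h_2^{-1})^2$ (signed by \eqref{inverse-iv}, i.e.\ \autoref{concaveinverse}(iv)) plus a remainder, and the closing inequality $y_1(c)+y_2(c)<1$ is handled identically via \autoref{specific}(ii). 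What differs is the packaging: your substitution $z=h_2^{-1}(cy)$ together with $h_1(z)h_2(z)=z^2$ collapses the function to $\frac{1}{c}\big(h_1(z)-z^2/c\big)$, which turns both derivatives into short chain-rule computations and makes the appearance of $f_2$ immediate through the identity $h_1'(z)=2zf_2(z)$, whereas the paper pushes the function through the general four-term formula \eqref{bigrightside}. The two remainders agree exactly --- your $\phi(4z)$ is $-4$ times the paper's $f(x)=1+4x^2-\cosh 4x+x\sinh 4x$, so the transcendental inequality is literally the same --- and your proof of it, differentiating down to $\phi^{(4)}(w)=-w\sinh w<0$ and integrating back, is a tidy alternative to the paper's power-series argument for $f'>0$.
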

\begin{proof}
Rescaling by $c>0$, we want to show strict concavity for the function 
\[
K(y) = y(1-y/c)\htwo(y)^2 = \left( \frac{1}{y} - \frac{1}{c} \right) h_2^{-1}(y)^2 , \qquad y > 1 .
\]
The second derivative $K^{\prime \prime}(y)$ is given by formula \eqref{bigrightside}, except replacing $g_1$ with $h_2$ there. The parenthetical term in this new version of \eqref{bigrightside} is positive by estimate \eqref{inverse-iv}, and so the factor of $-1/c$ in \eqref{bigrightside} makes that term negative.

The other terms in \eqref{bigrightside}, after $g_1(x)$ is replaced throughout by $h_2(x)=x \coth x$, evaulate to 
\[
-\frac{2\tanh^3 x}{(\sinh 2x - 2x)^3} f(x)
\]
where $f(x) = 1 + 4x^2 - \cosh 4x + x \sinh 4x$. To finish the concavity proof we show $f(x)>0$ for all $x>0$, so that $K^{\prime \prime}(y)<0$. Obviously $f(0)=0$, and the derivative is positive since 
\[
f^\prime(x) = 4x ( 2 + \cosh 4x - 3 \sinch 4x) = \sum_{k=2}^\infty \frac{(4x)^{2k+1}}{(2k)!} \left( 1 - \frac{3}{2k+1} \right) > 0
\]
by the usual power series expansions. 

For the monotonicity assertions in the lemma, we want a number $y_c = cy_2(c) \geq 1$ such that $K(y)$ is 
\begin{itemize}
\item strictly increasing for $y \in (1, y_c)$, 
\item strictly decreasing for $y \in (y_c,\infty)$. 
\end{itemize}
To get these results, differentiate the definition of $K$ to find 
\[
K^\prime(y) = \frac{2x}{h_2^\prime(x)} \left( f_2(x) - \frac{1}{c} \right) 
\]
where $y=h_2(x)$ and 
\[
f_2(x) = \frac{1}{h_2(x)} - \frac{x h_2^\prime(x)}{2h_2(x)^2} , \qquad x>0 .
\]
\autoref{specific}(ii) says that this function $f_2$ decreases strictly in value from $1$ to $0$ as $x$ increases from $0$ to $\infty$. 

If $0 < c \leq 1$ then $f_2(x)<1/c$ for all $x>0$, and choosing $y_c=1$ gives $K^\prime(y)<0$ for all $y>y_c$. If $c>1$ then $f_2(x_c) = 1/c$ for a unique number $x_c>0$. Letting $y_c=h_2(x_c)$, we have that $K^\prime(y)>0$ when $1<y<y_c$ (or $0<x<x_c$), while $K^\prime(y)<0$ when $y>y_c$ (or $x>x_c$). This proves the monotonicity claims in the lemma. 

Finally, we want to prove $y_1(c)+y_2(c)<1$ when $c>1$. When $1 < c \leq 3$ we recall $y_1(c)=0$ by the comment after \autoref{concavityH1}, and so the task in this range is to show $y_2(c)<1$. By definition $y_2(c)=y_c/c=h_2(x_c)/c$, and so $y_2(c)<1$ if and only if $h_2(f_2^{-1}(1/c))<c$. This last inequality follows from \autoref{specific}(ii) with $w=1/c \in (0,1)$. Next, when $c > 3$ we use the definition of $y_1(c)$ in \autoref{concavityH1} to show that $y_1(c)+y_2(c)<1$ if and only if $h_1(f_1^{-1}(1/c))+h_2(f_2^{-1}(1/c))<c$, which then follows from \autoref{specific}(ii) with $w=1/c \in (0,1/3)$. 
\end{proof}

\section{\bf The first and second Robin eigenvalues of an interval}
\label{identifyinginterval}

The Rayleigh quotient 
\[
\frac{\int_{-t}^t (u^\prime)^2 \, dx + \alpha \left( u(t)^2 + u(-t)^2 \right) }{\int_{-t}^t u^2 \, dx} 
\]
for the interval 
\[
\mathcal{I}(t)=(-t,t)
\]
generates the eigenvalue equation $-u^{\prime \prime} = \lambda u$ with boundary condition $\partial u / \partial \nu + \alpha u = 0$ at $x=\pm t$. Our results depend on understanding how the first and second Robin eigenvalues of the interval depend on the half-length $t>0$ and Robin parameter $\alpha \in \R$. 

The following lemmas each state two eigenvalue formulas. The first formula involves $g_1,g_2,h_1,h_2$ (which were defined in \autoref{notation}), and is useful when the half-length $t$ is fixed and the Robin parameter $\alpha$ is varying. The second formula involves $\go,\gt,\ho,\htwo$ (also defined in \autoref{notation}), and is useful when $\alpha$ is fixed and $t$ is varying. 
\begin{lemma} \label{firsteigen}
\[
\lambda_1(\mathcal{I}(t);\alpha) = 
\begin{cases}
g_1^{-1}(\alpha t)^2/t^2 \\
0 \\
-h_1^{-1}(-\alpha t)^2/t^2 
\end{cases}
=
\begin{cases}
\alpha^2 \go(\alpha t)^2 & \text{if $\alpha>0$,} \\
0 & \text{if $\alpha=0$,} \\
-\alpha^2 \ho(-\alpha t)^2 & \text{if $\alpha<0$.}
\end{cases}
\]
This first eigenvalue is a strictly increasing function of $\alpha \in \R$. As $\alpha \to -\infty$ it equals $-\alpha^2 \left(1+O(e^{2\alpha t}) \right)$, and as $\alpha \to \infty$ it converges to $(\pi/2t)^2$.
\end{lemma}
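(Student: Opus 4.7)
The plan is to write down the eigenfunction explicitly via an even ansatz, derive the transcendental equation satisfied by the frequency, and then read off the formulas by inverting $g_1$ and $h_1$.

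First I would recall that the ground state of a Robin Laplacian is of one sign (by the variational principle applied to $|u|$), and for the symmetric interval $\mathcal{I}(t)=(-t,t)$ it must be even because otherwise reflection would produce a different ground state of the same eigenvalue orthogonal to it. So I may restrict to even solutions of $-u''=\lambda u$ on $(-t,t)$ with the Robin boundary condition $u'(t)+\alpha u(t)=0$ (the condition at $x=-t$ then holds automatically by evenness).

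Next, I would split into three cases based on the sign of $\lambda$. If $\lambda=k^2>0$, the even solution is $u(x)=\cos(kx)$, so the boundary condition becomes $-k\sin(kt)+\alpha\cos(kt)=0$, i.e.\ $kt \tan(kt)=\alpha t$, which is $g_1(kt)=\alpha t$. This has a solution $kt\in(0,\pi/2)$ precisely when $\alpha t>0$, giving $kt=g_1^{-1}(\alpha t)$ and $\lambda_1=k^2=g_1^{-1}(\alpha t)^2/t^2=\alpha^2(g_1^{-1}(\alpha t)/(\alpha t))^2=\alpha^2 \go(\alpha t)^2$. If $\lambda=0$, the only even solution is $u\equiv\text{const}$, and the boundary condition forces $\alpha=0$. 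If $\lambda=-k^2<0$, the even solution is $u(x)=\cosh(kx)$, and the boundary condition gives $k\sinh(kt)+\alpha\cosh(kt)=0$, i.e.\ $h_1(kt)=-\alpha t$, which has a (unique) positive solution $kt=h_1^{-1}(-\alpha t)$ precisely when $\alpha<0$; then $\lambda_1=-h_1^{-1}(-\alpha t)^2/t^2=-\alpha^2 \ho(-\alpha t)^2$. That a sign-definite solution exists in each regime confirms that the value produced really is $\lambda_1$ (higher even eigenfunctions would correspond to roots of $g_1(kt)=\alpha t$ with $kt>\pi$, giving larger $\lambda$).

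Strict monotonicity of $\lambda_1(\mathcal{I}(t);\alpha)$ in $\alpha\in\R$ follows from the Rayleigh principle: for any admissible $u\not\equiv 0$, the Rayleigh quotient is strictly increasing in $\alpha$ because $\int_{\partial\mathcal{I}(t)}u^2\,dS=u(t)^2+u(-t)^2>0$ (if $u$ vanished at both endpoints, it would be a Dirichlet eigenfunction, which cannot be the ground state for the present Robin problem). Alternatively, Hadamard's formula gives $\partial\lambda_1/\partial\alpha=(u(t)^2+u(-t)^2)/\|u\|_2^2>0$ for the normalized ground state.

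Finally, the asymptotics follow from \autoref{inverseasymptotics}: as $\alpha\to-\infty$, $-\alpha t\to\infty$, so $h_1^{-1}(-\alpha t)=-\alpha t\bigl(1+O(e^{2\alpha t})\bigr)$, and squaring and dividing by $t^2$ gives $\lambda_1(\mathcal{I}(t);\alpha)=-\alpha^2\bigl(1+O(e^{2\alpha t})\bigr)$. As $\alpha\to\infty$, $g_1^{-1}(\alpha t)\nearrow\pi/2$ since $g_1:(0,\pi/2)\to(0,\infty)$ is a bijection, and thus $\lambda_1(\mathcal{I}(t);\alpha)\to(\pi/2t)^2$. The only mild subtlety is ensuring that the explicit even solution really is the ground state (rather than a non-symmetric or sign-changing one) in every sign regime of $\alpha$; that is handled by the sign-preserving/even argument above.
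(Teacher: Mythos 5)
Your proof is correct, and its computational core---splitting into even and odd modes, deriving the transcendental equations $g_1(kt)=\alpha t$ and $h_1(kt)=-\alpha t$ from the boundary condition, and inverting to get the stated formulas---is the same as the paper's. Where you genuinely diverge is in how you certify that the root you find is the \emph{first} eigenvalue: the paper enumerates all roots of both the even and odd transcendental equations and orders them branch by branch (bookkeeping it needs anyway, since the same proof simultaneously establishes the $\lambda_2$ formula of \autoref{secondeigen}), whereas you invoke positivity and simplicity of the Robin ground state to conclude it is one-signed and even, so the unique one-signed even solution in each sign regime of $\lambda$ must be it. Your route is shorter and more conceptual for $\lambda_1$ alone, at the cost of importing those standard facts; note that your phrase ``reflection would produce a \ldots\ ground state orthogonal to it'' is imprecise as written (the reflection need not be orthogonal to $u$), but the intended argument---a second, linearly independent ground state would yield one orthogonal to the positive ground state, which would then have to change sign---is standard and sound. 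The monotonicity and asymptotic claims are handled essentially as in the paper: the paper reads strict monotonicity directly off the formulas from the strict monotonicity of $g_1^{-1}$ and $h_1^{-1}$, while you use the Rayleigh/Hadamard argument; both work, and your strictness step correctly observes that the minimizer cannot vanish at both endpoints.
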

\noindent A more precise asymptotic as $\alpha \to -\infty$ was given by \text{Antunes \emph{et al.}\ \cite[Proposition 1]{AFK17}.}
\begin{lemma} \label{secondeigen}
\[
\lambda_2(\mathcal{I}(t);\alpha) = 
\begin{cases}
g_2^{-1}(\alpha t)^2/t^2 \\
0 \\
-h_2^{-1}(-\alpha t)^2/t^2 
\end{cases}
=
\begin{cases}
\alpha^2 \gt(\alpha t)^2 & \text{if $\alpha> -1/t$,} \\
0 & \text{if $\alpha= -1/t$,} \\
-\alpha^2 \htwo(-\alpha t)^2 & \text{if $\alpha< -1/t$.}
\end{cases}
\]
This second eigenvalue is a strictly increasing function of $\alpha \in \R$. As $\alpha \to -\infty$ it equals $-\alpha^2 \left(1+ O(e^{2\alpha t}) \right)$, and as $\alpha \to \infty$ it converges to $(\pi/t)^2$.
\end{lemma}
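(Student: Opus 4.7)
The plan is to exploit the reflection symmetry of $\mathcal{I}(t)=(-t,t)$ to decompose eigenfunctions into even and odd parts, identify $\lambda_2(\mathcal{I}(t);\alpha)$ as the smallest odd eigenvalue in every parameter regime, and then read off the claimed monotonicity and asymptotics from \autoref{notation}.

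First I would separate variables. For $\lambda=\mu^2>0$ the general solution of $-u''=\lambda u$ is $A\cos(\mu x)+B\sin(\mu x)$, and the symmetric Robin conditions at $\pm t$ force either $B=0$ (even) or $A=0$ (odd). Imposing the boundary relation at $x=t$ then gives $\mu t\tan(\mu t)=\alpha t$, i.e.\ $g_1(\mu t)=\alpha t$, in the even case, and $-\mu t\cot(\mu t)=\alpha t$, i.e.\ $g_2(\mu t)=\alpha t$, in the odd case. For $\lambda=-\mu^2<0$ the analogous hyperbolic analysis gives $h_1(\mu t)=-\alpha t$ from the even $\cosh(\mu x)$ and $h_2(\mu t)=-\alpha t$ from the odd $\sinh(\mu x)$. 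The eigenvalue $\lambda=0$ is supplied by the constant when $\alpha=0$ and by $u(x)=x$ when $\alpha=-1/t$.

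Next I would pick out the smallest root in each symmetry class, using the branch structure of \autoref{notation}. The lowest cosine branch yields $\mu t\in(0,\pi/2)$ and the lowest sine branch yields $\mu t\in(0,\pi)$, giving the smallest positive even and odd eigenvalues; since the next even branch begins above $\mu t=\pi$, the first odd eigenvalue automatically lies below the second even one. On the negative side, $h_1<h_2$ pointwise, so $h_1^{-1}(-\alpha t)>h_2^{-1}(-\alpha t)$, which makes the even $\cosh$-root the ground state and the odd $\sinh$-root---which exists only for $-\alpha t>1$, since $h_2$ takes values in $(1,\infty)$---the second eigenvalue in the negative regime. Thus $\lambda_2(\mathcal{I}(t);\alpha)$ coincides in every regime with the first odd eigenvalue, and dividing the eigenequation through by $\alpha t$ converts the explicit roots into the $\gt,\htwo$ forms stated.

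Finally, strict monotonicity of $\alpha\mapsto\lambda_2(\mathcal{I}(t);\alpha)$ is immediate from \autoref{notation}: $g_2^{-1}$ and $h_2^{-1}$ are strictly increasing inverses of strictly increasing functions, so $g_2^{-1}(\alpha t)^2/t^2$ rises with $\alpha>-1/t$ while $-h_2^{-1}(-\alpha t)^2/t^2$ rises as $\alpha$ increases through $(-\infty,-1/t)$, with the two pieces meeting continuously at $0$ when $\alpha=-1/t$. The asymptotic $\lambda_2=-\alpha^2(1+O(e^{2\alpha t}))$ as $\alpha\to-\infty$ follows at once from \autoref{inverseasymptotics}, and the limit $(\pi/t)^2$ as $\alpha\to\infty$ follows because $g_2^{-1}(\alpha t)\nearrow\pi$, the upper endpoint of the domain of $g_2$. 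The main obstacle, though not technically deep, is the cross-branch book-keeping: verifying that no higher branch of $-x\cot x$ or $x\coth x$ ever intrudes below the branch chosen here, and that the crossover at $\alpha=-1/t$ matches $\lambda_2=0$ continuously from both sides.
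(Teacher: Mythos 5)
Your proposal follows essentially the same route as the paper: the even/odd decomposition, the transcendental equations $g_2(\rho t)=\alpha t$ and $h_2(\rho t)=-\alpha t$ for the odd modes, the comparison $h_2^{-1}<h_1^{-1}$ in the negative regime, and the branch bookkeeping confirming that the first odd eigenvalue lies below the second even one, with the same monotonicity and asymptotic arguments at the end. One small imprecision: for $-1/t<\alpha\le 0$ the second even root only satisfies $\rho t>\pi/2$ (not $\rho t>\pi$ as you assert), but since the first odd root then has $\rho t\le\pi/2$ the ordering still holds, exactly as in the paper's case-by-case analysis.
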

When $\alpha=0$ the upper right formula in \autoref{secondeigen} is not well defined, because $\gt(0)$ is undefined (its denominator being zero). The upper left formula $g_2^{-1}(0)^2/t^2=(\pi/2t)^2$ still gives the correct value for $\lambda_2(\mathcal{I}(t);0)$.  

\autoref{firstsixinterval} plots the first six eigenvalues as functions of $\alpha$ for $t=1$, that is, for the interval $\mathcal{I}(1)=(-1,1)$. Formulas for these eigenvalue curves could be obtained from the proofs of the lemmas below. 
\begin{figure}
\begin{center}
\includegraphics[scale=0.5]{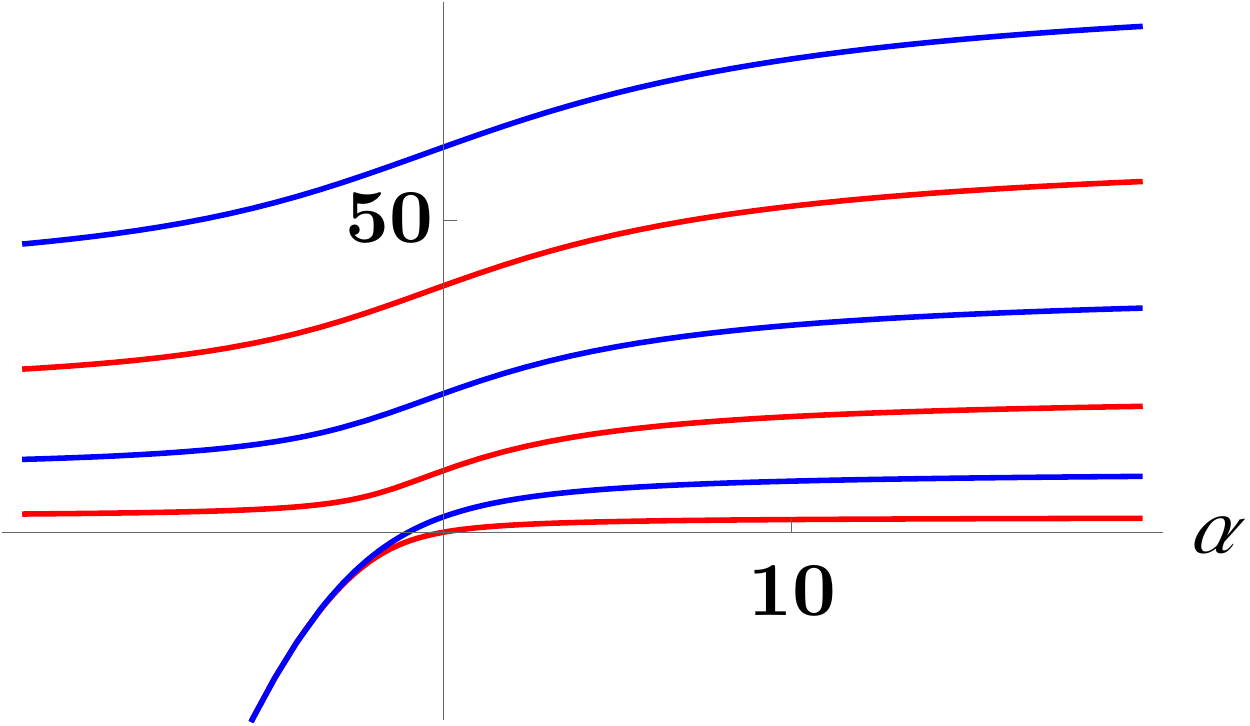}
\end{center}
\caption{\label{firstsixinterval} The first six eigenvalues $\lambda_k(\mathcal{I}(1);\alpha)$ for $k=1,\dots,6$ of the interval $\mathcal{I}(1)=(-1,1)$, plotted as functions of the Robin parameter $\alpha$. The eigenvalues come in pairs, corresponding to even and odd eigenfunctions. The even eigenvalue is always lower than the odd one.}
\end{figure}

\begin{proof}[Proof of \autoref{firsteigen} and \autoref{secondeigen}]
The Robin spectrum of the interval is known, of course, but the proofs and notations vary in clarity and notation, and many authors examine only $\alpha>0$. So it seems helpful to include a proof here, using our notation. 

Fix $t>0$. By symmetry of the interval, we may assume each eigenfunction is either even or odd. Thus the eigenvalue problem is  
\[
\begin{split}
u^{\prime \prime} +  \lambda u & =0 , \qquad 0<x<t,\\
u^\prime(t) + \alpha u(t) & = 0 .
\end{split}
\]

(i) First suppose $\lambda<0$, and write $\lambda= -\rho^2$ where $\rho>0$, so that the eigenfunction equation says $u^{\prime \prime}=\rho^2 u$. The even solution is $u=\cosh \rho x$, and applying the boundary condition gives $\rho \tanh \rho t = -\alpha$. Hence $\alpha < 0$, and multiplying by $t$ gives $h_1(\rho t) = -\alpha t$. Inverting, $\rho = h_1^{-1}(-\alpha t)/t$ when $\alpha<0$. 

The odd solution is $u=\sinh \rho x$. Applying the boundary condition yields $\rho \coth \rho t = -\alpha$, or $h_2(\rho t) = -\alpha t$. Hence $- \alpha t > 1$, or $\alpha < -1/t$. Inverting yields $\rho = h_2^{-1}(-\alpha t)/t$. This $\rho$-value is smaller than the one found in the even case, since $h_2^{-1} < h_1^{-1}$ by \eqref{h1h2}, and hence the eigenvalue $\lambda=-\rho^2$ is larger than in the even case. 

There are no other negative eigenvalues. Combining these facts establishes the formula for $\lambda_1$ in \autoref{firsteigen} when $\alpha<0$, and the formula for $\lambda_2$ in \autoref{secondeigen} when $\alpha<-1/t$. 

(ii) Now suppose $\lambda=0$, so that the eigenfunction equation is $u^{\prime \prime}=0$. The even solution $u=1$ satisfies the boundary condition when $\alpha=0$, and the odd solution $u=x$ satisfies it when $\alpha=-1/t$. This yields the zero eigenvalues in the lemmas. 

(iii) Lastly, suppose $\lambda>0$, and write $\lambda= \rho^2$ where $\rho>0$. The eigenfunction equation $u^{\prime \prime}= -\rho^2 u$ has even solution $u=\cos \rho x$, for which the boundary condition says $\rho t \tan \rho t = \alpha t$. The roots of this condition arise from the branches of $x\tan x$, and so there are roots with $\rho t \in (\pi/2,3\pi/2), (3\pi/2,5\pi/2),\ldots$; and when $\alpha>0$ we can further narrow these intervals to $\rho t \in (\pi,3\pi/2), (2\pi,5\pi/2), \ldots$; also, when $\alpha>0$ there is a smaller root with $\rho t \in (0,\pi/2)$ coming from the first branch of $\tan$, that is, from $g_1(\rho t)=\alpha t$. Thus the smallest ``even'' eigenvalue when $\alpha>0$ is the square of $\rho = g_1^{-1}(\alpha t)/t$. 

Consider now the odd solution $u=\sin \rho x$ of the eigenfunction equation. It must satisfy the boundary condition $-\rho t \cot \rho t = \alpha t$. The roots of the boundary condition come from the branches of $-x\cot x$, and so there are roots with $\rho t \in (\pi,2\pi), (2\pi,3\pi)$ and so on; and when $\alpha > -1/t$ (so that $\alpha t > -1$ lies in the range of $g_2$), there is also a smaller root with $\rho t \in (0,\pi)$, coming from the first branch of $\cot$, that is, from $g_2(\rho t) = \alpha t$. More precisely, if $-1/t<\alpha \leq 0$ then $\rho t \in (0,\pi/2]$ and if $\alpha>0$ then $\rho t \in (\pi/2,\pi)$. Either way,  the smallest ``odd'' eigenvalue when $\alpha>-1/t$ is the square of $\rho = g_2^{-1}(\alpha t)/t$. 

Suppose $\alpha>0$. All eigenvalues are then positive, and the preceding paragraphs show the smallest even eigenvalue has $\rho t<\pi/2$, while the smallest odd eigenvalue has $\rho t>\pi/2$. Thus the even eigenvalue is the first one, which gives the formula for $\lambda_1$ in \autoref{firsteigen}. Also, the smallest odd eigenvalue has $\rho t < \pi$, while the second-smallest even eigenvalue has $\rho t > \pi$. Thus the odd eigenvalue is the smaller one, giving the formula for $\lambda_2$ in \autoref{secondeigen} when $\alpha>0$.

Suppose finally that $-1/t < \alpha \leq 0$. As found in parts (i) and (ii) of the proof, the first eigenvalue is even and $\leq 0$, while all other eigenvalues are positive. The work above shows that the smallest odd eigenvalue has $\rho t \leq \pi/2$, while the second-smallest even eigenvalue has $\rho t > \pi/2$. Again the odd eigenvalue is the smaller one, giving the formula for $\lambda_2$ in \autoref{secondeigen} when $-1/t < \alpha \leq 0$.

Finally, the first and second eigenvalues are strictly increasing as functions of $\alpha$ because $g_1, h_1, g_2, h_2$ and their inverses are all strictly increasing. The liming values as $\alpha \to \infty$ follow from evaluating $g_1^{-1}(\infty)=\pi/2$ and $g_2^{-1}(\infty)=\pi$. To derive the limiting behavior $-\alpha^2 \left(1+ O(e^{2\alpha t}) \right)$ as $\alpha \to -\infty$, simply substitute $y=-\alpha t$ into the asymptotic formulas in \autoref{inverseasymptotics}. 
\end{proof}
To determine qualitatively how the first two eigenvalues of the interval depend on its length, we split the next three propositions into the cases of $\alpha$ being positive, zero, or negative. \autoref{firsttwoint} illustrates the negative and positive cases.
\begin{figure}
\begin{center}
\includegraphics[scale=0.5]{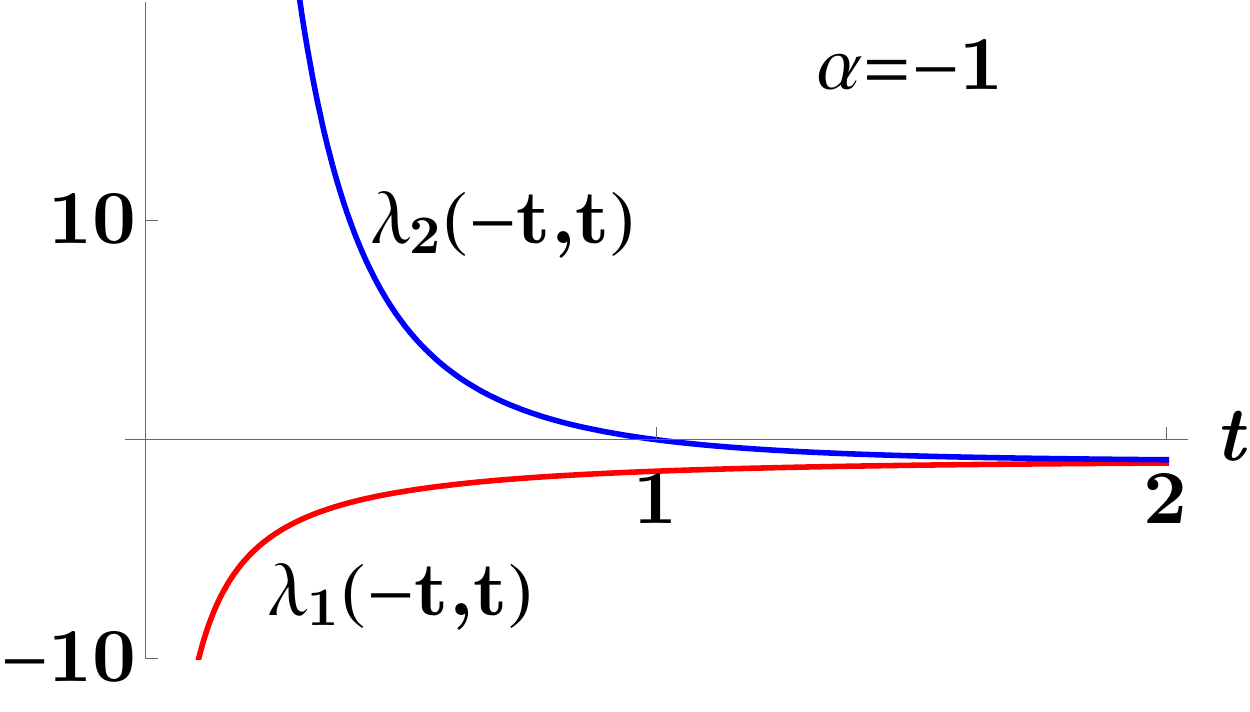}
\hspace{1cm}
\includegraphics[scale=0.5]{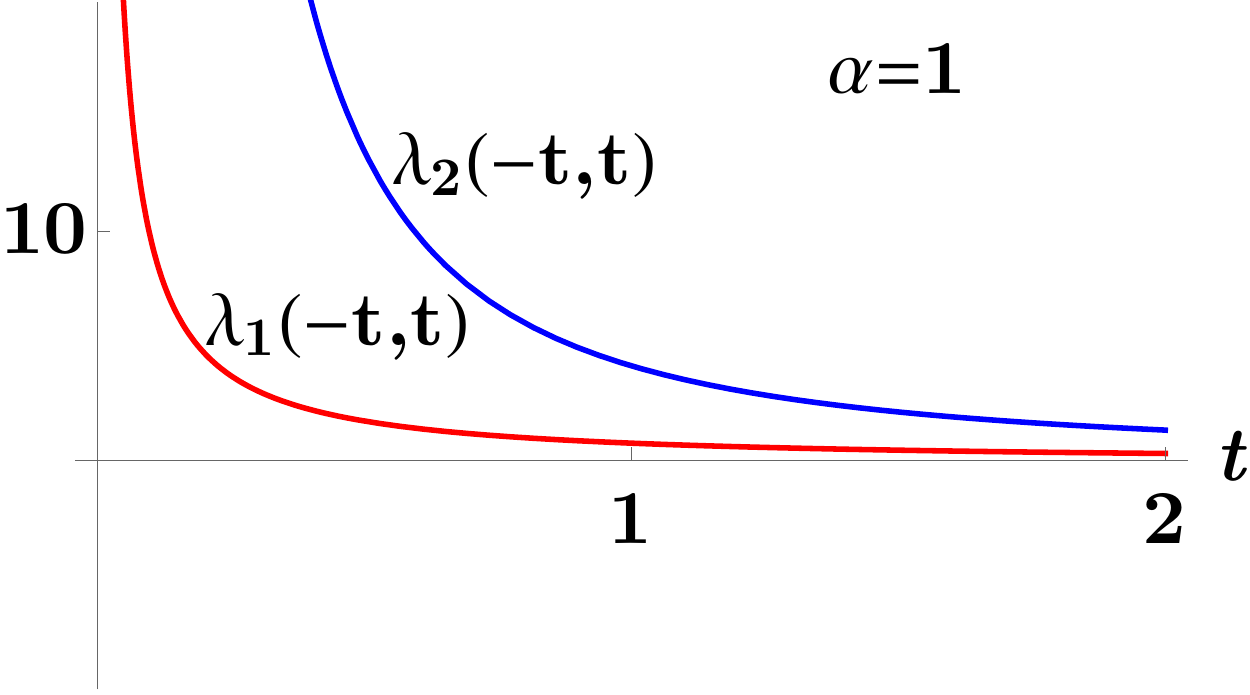}
\end{center}
\caption{\label{firsttwoint} Left: the first two eigenvalues $\lambda_1$ and $\lambda_2$ for the interval $\mathcal{I}(t)=(-t,t)$, when the half-length $t>0$ is variable and the Robin parameter $\alpha=-1$ is fixed. The horizontal asymptote has height $-\alpha^2=-1$. Right: the eigenvalues $\lambda_1$ and $\lambda_2$ as functions of $t>0$ when $\alpha=1$.}
\end{figure}

\begin{proposition}\label{1dimpos}
When $\alpha>0$, the first two eigenvalues, $\lambda_1(\mathcal{I}(t);\alpha)$ and $\lambda_2(\mathcal{I}(t);\alpha)$, are strictly decreasing as functions of $t>0$, and so is the spectral gap $\lambda_2(\mathcal{I}(t);\alpha)-\lambda_1(\mathcal{I}(t);\alpha)$. As $t$ increases from $0$ to $\infty$, all three functions decrease from $\infty$ to $0$.
\end{proposition}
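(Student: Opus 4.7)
The plan is to read everything off the interval formulas from \autoref{firsteigen} and \autoref{secondeigen}. For $\alpha>0$ we have
\[
\lambda_1(\mathcal{I}(t);\alpha) = \alpha^2 \go(\alpha t)^2 \qquad \text{and} \qquad \lambda_2(\mathcal{I}(t);\alpha) = \alpha^2 \gt(\alpha t)^2 ,
\]
so the $t$-dependence reduces entirely to properties of $\go$ and $\gt$ already catalogued in \autoref{notation}. Individual monotonicity of $\lambda_1$ and $\lambda_2$ is then immediate: both $\go$ and $\gt$ are positive and strictly decreasing on $(0,\infty)$ by \autoref{monotonicity}, the map $t \mapsto \alpha t$ is strictly increasing (since $\alpha>0$), and squaring a positive decreasing function preserves the decrease.

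For the spectral gap I would differentiate to obtain
\[
(\lambda_2 - \lambda_1)'(t) = 2\alpha^3 \big[ \gt(\alpha t)\gt'(\alpha t) - \go(\alpha t)\go'(\alpha t) \big]
\]
and then argue the bracket is negative. Two size comparisons are needed: the ranges $g_1^{-1}(y) \in (0,\pi/2)$ and $g_2^{-1}(y) \in (\pi/2,\pi)$ for $y>0$ give $\gt(y) > \go(y) > 0$, while \eqref{Hderiv} of \autoref{derivcomparison} gives $\go'(y) > \gt'(y)$, i.e.\ $|\gt'(y)| > |\go'(y)| > 0$. Multiplying these two inequalities yields $\gt|\gt'| > \go|\go'|$, so $\gt\gt' < \go\go'$, and the gap is strictly decreasing. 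This gap step is the step I expect to be the main obstacle, although \autoref{derivcomparison} has already absorbed the delicate trigonometric analysis.

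For the limits I would instead use the equivalent formula $\lambda_i = g_i^{-1}(\alpha t)^2/t^2$. As $t \to \infty$ the inverses $g_1^{-1}(\alpha t) \to \pi/2$ and $g_2^{-1}(\alpha t) \to \pi$ are bounded, so $\lambda_1,\lambda_2 \to 0$ and the gap with them. As $t \to 0^+$, the expansion $g_1(x)=x\tan x = x^2 + O(x^4)$ near zero inverts to $g_1^{-1}(y) \sim \sqrt{y}$, yielding $\lambda_1 \sim \alpha/t \to \infty$; meanwhile $g_2^{-1}(\alpha t) \to g_2^{-1}(0) = \pi/2$, so $\lambda_2 \sim (\pi/2t)^2 \to \infty$ and $\lambda_2-\lambda_1 \sim (\pi/2t)^2 \to \infty$. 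Combining the three monotonicity statements with these endpoint values finishes the proposition.
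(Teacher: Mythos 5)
Your proposal is correct and follows essentially the same route as the paper: individual monotonicity from \autoref{monotonicity}, the gap from the two comparisons $\gt>\go>0$ and $\go'>\gt'$ (inequality \eqref{Hderiv} of \autoref{derivcomparison}), and the endpoint limits from the behavior of $g_1^{-1}$ and $g_2^{-1}$. The only cosmetic difference is that the paper writes the gap as the product $\bigl(\sqrt{\lambda_2}+\sqrt{\lambda_1}\bigr)\bigl(\sqrt{\lambda_2}-\sqrt{\lambda_1}\bigr)$ of two positive decreasing factors rather than differentiating $\gt^2-\go^2$ directly; expanding the product rule shows the two computations are identical.
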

See the right side of \autoref{firsttwoint}. In fact, every eigenvalue $\lambda_k(\mathcal{I}(t);\alpha)$ for $k \geq 1$ is decreasing as a function of $t$, when $\alpha>0$, as one can see by rescaling the integrals in the Rayleigh quotient to integrate over the fixed interval $(-1,1)$ instead of over $\mathcal{I}(t)$. The interesting part of the lemma is that the spectral gap also decreases with $t$. 
\begin{proof}
The function $\go$ is positive and strictly decreasing, by \autoref{monotonicity}, and so $t \mapsto \lambda_1(\mathcal{I}(t);\alpha)$ is positive and strictly decreasing, by the formula in \autoref{firsteigen}. It is easy to check that $\lim_{y \to 0+} \go(y)=\infty$ and $\lim_{y \to \infty} \go(y)=0$, and so $\lambda_1(\mathcal{I}(t);\alpha)$ tends to $0$ as $t \to \infty$ and tends to $\infty$ as $t \to 0$. (The blow-up as $t \to 0$ can be determined quite precisely, since $g_1(x) = x \tan x \simeq x^2$ as $x \to 0$ and so $g_1^{-1}(y)^2 \simeq y$, so that $\lambda_1(\mathcal{I}(t);\alpha) \simeq \alpha t /t^2=\alpha/t$ as $t \to 0$.)

The function $\gt(y)$ is positive and strictly decreasing for $y>0$, by \autoref{monotonicity}. Hence by \autoref{secondeigen}, $t \mapsto \lambda_2(\mathcal{I}(t);\alpha)$ is positive and strictly decreasing. Again it is straightforward to see $\lim_{y \to 0+} \gt(y)=\infty$ and $\lim_{y \to \infty} \gt(y)=0$, and so $\lambda_2(\mathcal{I}(t);\alpha)$ tends to $\infty$ as $t \to 0$ and tends to $0$ as $t \to \infty$.  

Next, decompose the spectral gap as 
\[
(\lambda_2-\lambda_1)(\mathcal{I}(t);\alpha) 
= \big( \sqrt{\lambda_2(\mathcal{I}(t);\alpha)}+\sqrt{\lambda_1(\mathcal{I}(t);\alpha)} \big) \big( \sqrt{\lambda_2(\mathcal{I}(t);\alpha)}-\sqrt{\lambda_1(\mathcal{I}(t);\alpha)} \big) .
\]
The first factor on the right side is strictly decreasing from $\infty$ to $0$ as a function of $t$, because $\lambda_1$ and $\lambda_2$ have that property. Meanwhile, the second factor equals $\alpha \gt(\alpha t) - \alpha \go(\alpha t)$, whose $t$-derivative is $\alpha^2 \big( \gt^\prime(\alpha t) - \go^\prime(\alpha t) \big)$. This derivative is negative by \eqref{Hderiv} in \autoref{derivcomparison}, and so the second factor decreases strictly as $t$ increases. \autoref{1dimpos} now follows. 
\end{proof}

The result is easy in the Neumann case, where $\alpha=0$:
\begin{proposition}\label{1dimzero}
When $\alpha=0$, the first eigenvalue $\lambda_1(\mathcal{I}(t);0)=0$ is constant and the second eigenvalue $\lambda_2(\mathcal{I}(t);0)=(\pi/2t)^2$ decreases strictly from $\infty$ to $0$ as $t$ increases from $0$ to $\infty$. 
\end{proposition}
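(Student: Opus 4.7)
The Neumann case follows almost immediately from the eigenvalue formulas already established for the interval, so the main work is bookkeeping. For $\lambda_1$, I would simply substitute $\alpha=0$ into the middle case of the formula in \autoref{firsteigen}, which directly yields $\lambda_1(\mathcal{I}(t);0)=0$ for every $t>0$. For $\lambda_2$, I would invoke the left-column expression $\lambda_2(\mathcal{I}(t);0) = g_2^{-1}(0)^2/t^2$ from \autoref{secondeigen} (noting, as the text remarks, that the right-column expression $\alpha^2 \gt(\alpha t)^2$ is undefined at $\alpha=0$, whereas the left-column expression remains valid by continuity). Since $g_2(\pi/2) = -(\pi/2)\cot(\pi/2) = 0$ and $g_2$ is strictly increasing on $(0,\pi)$, inverting gives $g_2^{-1}(0)=\pi/2$, whence $\lambda_2(\mathcal{I}(t);0)=(\pi/2t)^2$.

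The claimed qualitative behavior in $t$ then reduces to one-variable calculus: the function $t \mapsto (\pi/2t)^2$ has derivative $-\pi^2/(2t^3)<0$ on $(0,\infty)$, hence is strictly decreasing, with $\lim_{t\to 0+}(\pi/2t)^2=\infty$ and $\lim_{t\to\infty}(\pi/2t)^2=0$. There is no genuine obstacle in this proposition; it merely records the Neumann endpoint case in parallel with \autoref{1dimpos}, so that the negative-$\alpha$ counterpart can be stated and proved next.
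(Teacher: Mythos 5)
Your proposal is correct and matches the paper's intent: the paper gives no separate proof of this proposition, simply introducing it with ``The result is easy in the Neumann case,'' and your argument supplies exactly the routine details (reading off $\lambda_1=0$ and $g_2^{-1}(0)=\pi/2$ from \autoref{firsteigen} and \autoref{secondeigen}, then elementary calculus for the monotonicity and limits). Nothing is missing.
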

Note the Neumann spectral gap equals the second eigenvalue, because the first eigenvalue is zero. 

Next we treat negative $\alpha$. Again see \autoref{firsttwoint}.
\begin{proposition}\label{1dimneg}
Fix $\alpha<0$. Then $\lambda_1(\mathcal{I}(t);\alpha)$ is strictly increasing and $\lambda_2(\mathcal{I}(t);\alpha)$ is strictly decreasing, as a function of $t>0$, and hence the spectral gap $\lambda_2(\mathcal{I}(t);\alpha) - \lambda_1(\mathcal{I}(t);\alpha)$ is strictly decreasing. The limiting values are:
\begin{align*}
 \lim_{t \to 0} \lambda_1(\mathcal{I}(t);\alpha) = -\infty , & \qquad  \lim_{t \to \infty} \lambda_1(\mathcal{I}(t);\alpha) = -\alpha^2 , \\
  \lim_{t \to 0} \lambda_2(\mathcal{I}(t);\alpha) = \infty , & \qquad  \lim_{t \to \infty} \lambda_2(\mathcal{I}(t);\alpha) = -\alpha^2 , \\
    \lim_{t \to 0} (\lambda_2-\lambda_1)(\mathcal{I}(t);\alpha) = \infty , & \qquad  \lim_{t \to \infty} (\lambda_2-\lambda_1)(\mathcal{I}(t);\alpha) = 0 ,
\end{align*}
and the horizontal intercept for $\lambda_2$ occurs at $t=1/|\alpha|$ since $\lambda_2(\mathcal{I}(1/|\alpha|);\alpha) = 0$.
 \end{proposition}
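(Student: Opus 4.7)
The plan is to exploit the right-hand formulas of \autoref{firsteigen} and \autoref{secondeigen}, which express $\lambda_1$ and $\lambda_2$ directly as compositions of the fixed linear function $t \mapsto -\alpha t$ (for $\alpha<0$ we have $-\alpha>0$) with the auxiliary functions $\ho$, $\gt$, and $\htwo$. Monotonicity of these auxiliary functions comes from \autoref{monotonicity}, and the asymptotic behavior comes from \autoref{inverseasymptotics} together with simple series expansions of $g_2$ and $h_1$ at $0$. The horizontal intercept claim is immediate, since $\lambda_2(\mathcal{I}(t);\alpha) = 0$ by \autoref{secondeigen} precisely at $\alpha = -1/t$, i.e. $t = 1/|\alpha|$.

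First I would handle $\lambda_1(\mathcal{I}(t);\alpha) = -\alpha^2 \ho(-\alpha t)^2$. As $t$ increases, $-\alpha t$ increases, and $\ho$ is strictly decreasing and positive by \autoref{monotonicity}(iii); hence $\ho(-\alpha t)^2$ strictly decreases, so $-\alpha^2 \ho(-\alpha t)^2$ strictly increases. For the limits: as $t\to 0^+$, $h_1^{-1}(-\alpha t)\sim\sqrt{-\alpha t}$ (since $h_1(x)=x\tanh x \sim x^2$ near $0$), giving $\ho(-\alpha t)^2\to\infty$ and $\lambda_1\to-\infty$; as $t\to\infty$, \autoref{inverseasymptotics} gives $\ho(-\alpha t)\to 1$ and hence $\lambda_1\to-\alpha^2$.

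Next I would handle $\lambda_2$ by splitting at the critical length $t^\ast := 1/|\alpha|$. For $t\in(0,t^\ast)$ we have $\alpha t\in(-1,0)$ and $\lambda_2 = \alpha^2 \gt(\alpha t)^2$. Since $\gt$ is strictly decreasing on $(-1,0)$ by \autoref{monotonicity}(ii) with $\gt<0$ there, as $t$ increases $\alpha t$ decreases and $\gt(\alpha t)$ increases (from $-\infty$ toward $0^-$); squaring flips this into a strict decrease from $\infty$ to $0$. For $t\in(t^\ast,\infty)$ we have $-\alpha t>1$ and $\lambda_2 = -\alpha^2 \htwo(-\alpha t)^2$. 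Here $\htwo$ is strictly increasing and positive by \autoref{monotonicity}(iv), so $\htwo(-\alpha t)^2$ strictly increases and $\lambda_2$ continues to strictly decrease. Continuity (and the value $0$) at $t=t^\ast$ matches on both sides, so $\lambda_2$ is strictly decreasing on all of $(0,\infty)$. The limit as $t\to\infty$ is $-\alpha^2$ by \autoref{inverseasymptotics} applied to $h_2^{-1}$.

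The spectral gap assertion is then a direct consequence: $\lambda_2$ strictly decreases while $\lambda_1$ strictly increases, so $\lambda_2-\lambda_1$ strictly decreases, from $+\infty$ at $t\to 0^+$ to $-\alpha^2-(-\alpha^2)=0$ at $t\to\infty$. I do not foresee a serious obstacle here. The only subtlety is at the transition $t=t^\ast$ for $\lambda_2$, where one must verify that the two regime formulas match in value and that the direction of monotonicity does not reverse; but both are handled automatically by the zero of $g_2^{-1}$ (respectively $h_2^{-1}$) at the boundary $\alpha t=-1$, together with the monotonicity of $\gt$ and $\htwo$ on their respective domains.
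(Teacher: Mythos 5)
Your proposal is correct and follows essentially the same route as the paper: it reads off $\lambda_1$ and $\lambda_2$ from the $\ho,\gt,\htwo$ formulas of \autoref{firsteigen} and \autoref{secondeigen}, invokes \autoref{monotonicity} for the monotonicity in each regime, checks the value $0$ at the join $t=1/|\alpha|$, and gets the limits from \autoref{inverseasymptotics} and the small-argument behavior of $h_1$ and $g_2$. No gaps.
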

The observation that $t \mapsto \lambda_1(\mathcal{I}(t);\alpha)$ is strictly increasing, when $\alpha < 0$, was made already by Antunes \emph{et al.}\ \cite[Proposition 2]{AFK17}, and they found the limiting value $-\alpha^2$ as $t \to \infty$, in \cite[Proposition 3]{AFK17}.
\begin{proof}
The function $\ho$ is positive and strictly decreasing, by \autoref{monotonicity}, and so $t \mapsto \lambda_1(\mathcal{I}(t);\alpha)$ is negative and strictly increasing, by the formula in \autoref{firsteigen}. Since $\ho(\infty)=1$ and $\ho(0+)=\infty$, the limiting values of $\lambda_1$ as $t \to \infty$ and $t \to 0$ are as stated in the lemma. (The blow-up as $t \to 0$ can be established precisely, since $h_1(x) = x \tanh x \simeq x^2$ as $x \to 0$ and so $h_1^{-1}(y)^2 \simeq y$, so that $\lambda_1(\mathcal{I}(t);\alpha) \simeq -(-\alpha t) /t^2=\alpha/t$ as $t \to 0$. This blow-up rate was noted by Antunes \emph{et al.}\ \cite[Proposition 3]{AFK17}.)

The second eigenvalue requires more careful analysis. The function $\gt(y)$ is negative and strictly decreasing for $-1<y<0$, by \autoref{monotonicity}, and so $\gt(y)^2$ is positive and strictly increasing. Hence $t \mapsto \lambda_2(\mathcal{I}(t);\alpha)$ is positive and strictly decreasing when $0 < t < -1/\alpha$ by \autoref{secondeigen} (remembering here that $-\alpha>0$). Further, $\gt(0-)= -\infty$ and so $\lambda_2$ tends to $\infty$ as $t \to 0+$. Also, $\lim_{y \searrow -1} G_2(y)=0$ and so the eigenvalue approaches $0$ as $t$ approaches $-1/\alpha$ from below. 

When $t = -1/\alpha$ the second eigenvalue is $0$. 

Now suppose $t > -1/\alpha$. \autoref{monotonicity} says $\htwo$ is positive and strictly increasing, and so $t \mapsto \lambda_2(\mathcal{I}(t);\alpha)$ is negative and strictly decreasing by \autoref{secondeigen}. Note the eigenvalue approaches $0$ as $t$ approaches $-1/\alpha$ from above, since $\htwo(1)=0$. Further, $\htwo(\infty)=1$ and so $\lambda_2$ tends to $-\alpha^2$ as $t \to \infty$.
\end{proof}

\section{\bf The first and second Robin eigenvalues of a rectangular box}
\label{identifying}

Now that the interval is understood, we can identify the first and second Robin eigenvalues of the rectangular box
\[
\mathcal{B}(w) = \mathcal{I}(w_1) \times \dots \times \mathcal{I}(w_n)
\]
where $w=(w_1,\dots,w_n) \in \Rn, n \geq 1$, with $w_j > 0$ for each $j$. The width of the box in the $j$th direction is $2w_j$. Later in the section we show the spectral gap of the box is the same as the gap of its longest edge, that is, the largest width or longest interval. 
\begin{lemma}[First eigenvalue] \label{firsteigenbox}
\begin{align*}
\lambda_1(\mathcal{B}(w);\alpha)  
& = \lambda_1\big( \mathcal{I}(w_1) ; \alpha \big) + \lambda_1\big( \mathcal{I}(w_2) ; \alpha \big) + \dots + \lambda_1\big( \mathcal{I}(w_n) ; \alpha \big) \\
& = 
\begin{cases}
\alpha^2 \left| \big( \go(\alpha {w_1}), \dots , \go(\alpha {w_n}) \big) \right|^2 & \text{if $\alpha>0$,} \\
0 & \text{if $\alpha=0$,} \\
- \alpha^2 \left| \big( \ho(-\alpha {w_1}), \dots , \ho(-\alpha {w_n}) \big) \right|^2 & \text{if $\alpha<0$.}
\end{cases}
\end{align*}
This first eigenvalue is a strictly increasing function of $\alpha \in \R$. 
\end{lemma}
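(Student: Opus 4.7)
The plan is to use separation of variables, which is available because the box is a Cartesian product and the Robin boundary condition decouples face by face. Fix $\alpha \in \R$. If $\phi_j$ is an eigenfunction of the one-dimensional Robin problem on $\mathcal{I}(w_j)$ with eigenvalue $\mu_j$, then the tensor product $u(x)=\phi_1(x_1)\cdots\phi_n(x_n)$ satisfies $-\Delta u = (\mu_1+\dots+\mu_n)u$ in $\mathcal{B}(w)$. On the face $x_j = \pm w_j$ the outward normal is $\pm e_j$, so $\partial u/\partial \nu + \alpha u$ reduces to $\bigl(\pm \phi_j'(\pm w_j) + \alpha \phi_j(\pm w_j)\bigr)\prod_{i \neq j} \phi_i(x_i)$, which vanishes precisely when $\phi_j$ satisfies the 1D Robin condition at $\pm w_j$. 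Hence every such product is an eigenfunction of the box problem with eigenvalue equal to the sum of the 1D eigenvalues.

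Next I would invoke completeness. Each 1D Robin Laplacian on $\mathcal{I}(w_j)$ has compact resolvent and therefore an orthonormal eigenbasis $\{\phi_{j,k}\}_{k \geq 1}$ of $L^2(\mathcal{I}(w_j))$. The tensor-product family $\phi_{1,k_1}\otimes\dots\otimes\phi_{n,k_n}$ is then an orthonormal basis of $L^2(\mathcal{B}(w))$, and each member is an eigenfunction of the box Robin Laplacian by the computation above. This exhausts the spectrum, so every eigenvalue of $\mathcal{B}(w)$ has the form $\lambda_{k_1}(\mathcal{I}(w_1);\alpha)+\dots+\lambda_{k_n}(\mathcal{I}(w_n);\alpha)$ with $k_j \geq 1$. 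Minimizing over all such tuples forces $k_j=1$ for each $j$, giving the identity
\[
\lambda_1(\mathcal{B}(w);\alpha) = \sum_{j=1}^n \lambda_1(\mathcal{I}(w_j);\alpha).
\]

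The explicit three-case formula then follows by substituting the identities from \autoref{firsteigen}, using that for $\alpha>0$ (resp.\ $\alpha<0$) every summand is positive (resp.\ negative), so the signs match and the sum of squared norms assembles into $|(\go(\alpha w_1),\dots,\go(\alpha w_n))|^2$ (resp.\ $|(\ho(-\alpha w_1),\dots,\ho(-\alpha w_n))|^2$). Strict monotonicity in $\alpha$ is immediate from the sum representation: each summand $\alpha \mapsto \lambda_1(\mathcal{I}(w_j);\alpha)$ is strictly increasing by \autoref{firsteigen}, hence so is the sum.

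The main point to be careful about is the completeness step, since one needs to rule out eigenfunctions that are not tensor products; this is standard for tensor products of self-adjoint operators with compact resolvents, but should be cited or justified explicitly. After that hurdle the remainder is assembly: combining the tensor formula with \autoref{firsteigen} to obtain the stated closed-form expressions and deducing monotonicity from the 1D case.
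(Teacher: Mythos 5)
Your proof is correct and takes essentially the same route as the paper, whose entire argument is to invoke separation of variables (the first eigenfunction of the box is the product of the first eigenfunctions of the intervals) and then cite \autoref{firsteigen}. You have simply made explicit the tensor-product completeness step and the minimization over index tuples that the paper treats as standard and leaves unstated.
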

\begin{proof}
By separation of variables, the first eigenvalue for the box arises from summing the first eigenvalues of each of the intervals. (The first eigenfunction for the box is the product of the first eigenfunctions of the intervals.) Hence the lemma follows directly from \autoref{firsteigen}. 
\end{proof}
The first eigenvalue tends to infinity in magnitude when any one of the edge lengths tends to zero:
\begin{equation}\label{firsttoinfinity}
\lim_{w_n \to 0} \lambda_1(\mathcal{B}(w);\alpha) = 
\begin{cases}
\ \ \infty & \text{if $\alpha>0$,} \\
-\infty & \text{if $\alpha<0$,}
\end{cases}
\end{equation}
by \autoref{1dimpos} and \autoref{1dimneg}, where the other edges $w_1,\dots,w_{n-1}$ are arbitrary and may vary as $w_n \to 0$. For more precise inequalities on the first eigenvalue see Freitas and Kennedy \cite[Appendix A.1]{FK18}. 

The second eigenvalue of the box depends on knowing which edge is longest. 
\begin{lemma}[Second eigenvalue] \label{secondeigenbox}
If the longest edge of the box is the first one, so that $w_1 \geq w_j$ for all $j$, then 
\begin{equation} \label{eigentwodecomp}
\lambda_2\big( \mathcal{B}(w) ; \alpha \big) = \lambda_2\big( \mathcal{I}(w_1) ; \alpha \big) + \lambda_1\big( \mathcal{I}(w_2) ; \alpha \big) + \dots + \lambda_1\big( \mathcal{I}(w_n) ; \alpha \big) 
\end{equation}
for all $\alpha \in \R$. This second eigenvalue is a strictly increasing function of $\alpha \in \R$. 
\end{lemma}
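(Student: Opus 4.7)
The plan is to invoke separation of variables and then carefully identify which single‐coordinate excitation produces the smallest increment above $\lambda_1(\mathcal{B}(w);\alpha)$. Since the Robin eigenvalue problem on a rectangular box separates into one‐dimensional problems on each edge, the spectrum of $\mathcal{B}(w)$ consists of all sums
\[
\mu_{k_1}(w_1) + \mu_{k_2}(w_2) + \cdots + \mu_{k_n}(w_n), \qquad k_j \geq 1,
\]
where I write $\mu_k(t) = \lambda_k(\mathcal{I}(t);\alpha)$. The first eigenvalue is $\lambda_1(\mathcal{B}(w);\alpha)=\sum_j \mu_1(w_j)$, using the product of first interval eigenfunctions (as in \autoref{firsteigenbox}).

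Next I would observe that any sum other than $\sum_j \mu_1(w_j)$ arises from replacing at least one $\mu_1(w_j)$ by some $\mu_{k_j}(w_j)$ with $k_j \geq 2$, and because $\mu_2(w_j) \leq \mu_{k_j}(w_j)$ the smallest such increment in each coordinate is exactly $\mu_2(w_j)-\mu_1(w_j)$. Hence
\[
\lambda_2(\mathcal{B}(w);\alpha) = \lambda_1(\mathcal{B}(w);\alpha) + \min_{1 \leq j \leq n} \big( \mu_2(w_j) - \mu_1(w_j) \big),
\]
and the minimizing index $j$ is the one for which the one‐dimensional Robin spectral gap is smallest.

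The key step is then to recognize that the interval gap $t \mapsto \mu_2(t)-\mu_1(t)$ is a strictly decreasing function of the half‐length $t>0$, for every real $\alpha$: this is precisely the content of \autoref{1dimpos} when $\alpha>0$, of \autoref{1dimzero} when $\alpha=0$ (where the gap equals $(\pi/2t)^2$), and of \autoref{1dimneg} when $\alpha<0$. Since $w_1 \geq w_j$ for all $j$, the minimum gap is attained (uniquely, up to ties in the widths) at $j=1$, which yields the stated identity \eqref{eigentwodecomp}.

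Finally, monotonicity of $\lambda_2(\mathcal{B}(w);\alpha)$ in $\alpha$ follows immediately from \eqref{eigentwodecomp}, since each summand is strictly increasing in $\alpha$ by \autoref{firsteigen} and \autoref{secondeigen}. The only real obstacle is justifying step three — establishing that the one‐dimensional gap decreases in $t$ across all three sign regimes of $\alpha$ — but this is already done in the preceding section, so the present lemma reduces to combining those facts with the elementary separation‐of‐variables combinatorics.
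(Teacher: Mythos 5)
Your proposal is correct and follows essentially the same route as the paper: separation of variables reduces the identification of $\lambda_2(\mathcal{B}(w);\alpha)$ to locating the edge with the smallest one-dimensional spectral gap, and the strict monotonicity of the interval gap in its length (from \autoref{1dimpos}, \autoref{1dimzero} and \autoref{1dimneg}) places that edge at the longest one. The paper phrases this contrapositively---letting $k$ be the index where the second eigenvalue is taken and deducing $w_k \geq w_j$ from the gap comparison---but the content and the supporting lemmas are identical to yours.
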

It is no loss of generality to suppose the first edge of the box is the longest, since we may always rotate the box. Formula \eqref{eigentwodecomp} can be made more explicit by using the interval results from \autoref{identifyinginterval}. 
\begin{proof}
By separation of variables, the second eigenvalue for the box arises from summing the second eigenvalue on one of the intervals, say the $k$th interval, and the first eigenvalues of the remaining $n-1$ intervals. We will show $w_k \geq w_j$ for all $j$, so that the longest interval is the one on which the second eigenvalue must be taken. 

Since $\lambda_2\big( \mathcal{B}(w) ; \alpha \big)$ is the smallest eigenvalue having the specified form, the eigenvalue would increase if we used the second eigenvalue for $w_j$ instead of for $w_k$. Thus
\[
\lambda_2\big( \mathcal{I}(w_k) ; \alpha \big) + \lambda_1\big( \mathcal{I}(w_j) ; \alpha \big) 
\leq 
\lambda_1\big( \mathcal{I}(w_k) ; \alpha \big) + \lambda_2\big( \mathcal{I}(w_j) ; \alpha \big) .
\]
That is, the spectral gap of the interval increases from $w_k$ to $w_j$: 
\[
(\lambda_2-\lambda_1)\big( \mathcal{I}(w_k) ; \alpha \big) \leq
(\lambda_2-\lambda_1)\big( \mathcal{I}(w_j) ; \alpha \big) .
\]
Since the spectral gap is strictly decreasing as a function of the length of the interval, by \autoref{1dimpos}, \autoref{1dimzero} and \autoref{1dimneg}, we deduce $w_k \geq w_j$.
\end{proof}

\begin{corollary}[Spectral gap of a box equals the gap of its longest edge] \label{boxgap}
If $w_1 \geq w_j$ for all $j$ then 
\[
(\lambda_2-\lambda_1)\big( \mathcal{B}(w) ; \alpha \big) = (\lambda_2-\lambda_1)\big( \mathcal{I}(w_1) ; \alpha \big) , \qquad \alpha \in \R .
\]
\end{corollary}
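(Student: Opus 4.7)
The plan is to subtract the two preceding lemmas. By hypothesis $w_1 \geq w_j$ for all $j$, so \autoref{secondeigenbox} applies with the second-eigenvalue contribution taken from the first (longest) interval. Explicitly, I would write
\[
\lambda_2(\mathcal{B}(w);\alpha) = \lambda_2(\mathcal{I}(w_1);\alpha) + \sum_{j=2}^{n} \lambda_1(\mathcal{I}(w_j);\alpha),
\]
and from \autoref{firsteigenbox}
\[
\lambda_1(\mathcal{B}(w);\alpha) = \sum_{j=1}^{n} \lambda_1(\mathcal{I}(w_j);\alpha).
\]

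Subtracting these two identities, the sums $\sum_{j=2}^n \lambda_1(\mathcal{I}(w_j);\alpha)$ cancel, leaving precisely
\[
(\lambda_2-\lambda_1)(\mathcal{B}(w);\alpha) = \lambda_2(\mathcal{I}(w_1);\alpha) - \lambda_1(\mathcal{I}(w_1);\alpha) = (\lambda_2-\lambda_1)(\mathcal{I}(w_1);\alpha),
\]
which is the desired identity for every $\alpha \in \R$.

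There is really no obstacle here: the separation-of-variables structure was already exploited in proving \autoref{firsteigenbox} and \autoref{secondeigenbox}, and the corollary is the telescoping statement that falls out of that structure. The only conceptual point worth emphasizing in the write-up is \emph{why} the $\lambda_2$-contribution must come from the longest interval, but this was already established inside the proof of \autoref{secondeigenbox} using the strict monotonicity of the one-dimensional spectral gap (\autoref{1dimpos}, \autoref{1dimzero}, \autoref{1dimneg}). Hence the corollary requires no new ingredients and can be stated as a one-line consequence.
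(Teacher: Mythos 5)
Your proposal is correct and is exactly the paper's argument: the paper proves \autoref{boxgap} in one line by subtracting \autoref{firsteigenbox} from \autoref{secondeigenbox}, which is precisely the cancellation you write out. Nothing further is needed.
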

This corollary follows by subtraction of \autoref{firsteigenbox} and \autoref{secondeigenbox}. 

\begin{example}[Second eigenvalue of the square] \label{squareexample}
The square $\mathcal{S}$ with edge length $2$ has vanishing second eigenvalue for   
\[
\alpha_0 \simeq -0.68825 , 
\]
meaning $\lambda_2(\mathcal{S};\alpha_0) = 0$. Hence $\lambda_1(\mathcal{S};\alpha) < 0 < \lambda_2(\mathcal{S};\alpha)$ whenever $\alpha \in (\alpha_0,0)$.
\begin{proof}
We need only consider $\alpha<0$, since the second eigenvalue is positive when $\alpha \geq 0$.  From \autoref{secondeigenbox}, \autoref{firsteigen} and \autoref{secondeigen} we find
\begin{align*}
\lambda_2(\mathcal{S};\alpha)
& = \lambda_2\big( \mathcal{I}(1) ; \alpha \big) + \lambda_1\big( \mathcal{I}(1) ; \alpha \big) \\
& = g_2^{-1}(\alpha)^2 - h_1^{-1}(-\alpha)^2 .
\end{align*}
We assume here that $\alpha> -1$, since otherwise the lemmas show the second eigenvalue of the square is negative, whereas we want it to vanish. 

Thus the second eigenvalue vanishes when the number $\alpha \in (-1,0)$ satisfies $g_2^{-1}(\alpha) = h_1^{-1}(-\alpha)$. Writing $x=g_2^{-1}(\alpha) \in (0,\pi/2)$, the condition becomes $h_1(x)=-g_2(x)$, which reduces to $\tanh x = \cot x$. Solving numerically gives $x \simeq 0.93755$, and so $\alpha= g_2(x) = -x \cot x \simeq -0.68825$. 

Since the first and second Robin eigenvalues of the square are strictly increasing as functions of $\alpha$ by \autoref{firsteigenbox} and \autoref{secondeigenbox}, we conclude the second eigenvalue is positive when $\alpha > \alpha_0$, and of course the first eigenvalue is negative when $\alpha<0$. 
\end{proof}
\end{example}

\section{\bf Proofs of main theorems}
\label{mainproofs}
\begin{proof}[\bf Proof of \autoref{gapmonot}]
Without loss of generality we may assume $w_1$ is the largest of the $w_j$. We will show that the spectral gap is strictly increasing for $\alpha$ in each of the three intervals $(-\infty,-1/{w_1}), (-1/{w_1},0)$ and $(0,\infty)$. 

The spectral gap of the box equals the spectral gap of its longest side, with
\[
(\lambda_2-\lambda_1)(\mathcal{B};\alpha) = (\lambda_2-\lambda_1)(\mathcal{I}(w_1);\alpha)
\]
by \autoref{boxgap}. Hence when $\alpha>0$, 
\[
(\lambda_2-\lambda_1)(\mathcal{B};\alpha) = \frac{g_2^{-1}(\alpha {w_1})^2 - g_1^{-1}(\alpha {w_1})^2}{w_1^2} 
\]
by using the formulas for the first two eigenvalues of the interval from \autoref{firsteigen} and \autoref{secondeigen}. 
Thus the spectral gap is strictly increasing with respect to $\alpha>0$, by \eqref{inversederiv} in \autoref{derivcomparison}. The limit as $\alpha \to \infty$ equals $\big(\pi^2 - (\pi/2)^2\big)/w_1^2$,
which is the gap between the first two Dirichlet eigenvalues of the box. 

When $-1/{w_1}<\alpha<0$, the gap is
\[
(\lambda_2-\lambda_1)(\mathcal{B};\alpha) = \frac{g_2^{-1}(\alpha {w_1})^2 + h_1^{-1}(-\alpha {w_1})^2}{w_1^2} ,
\]
which is strictly increasing with respect to $\alpha$ by \autoref{derivcomparisonneg}(i). 

When $\alpha<-1/{w_1}$, the gap formula is that 
\[
(\lambda_2-\lambda_1)(\mathcal{B};\alpha) = \frac{-h_2^{-1}(-\alpha {w_1})^2 + h_1^{-1}(-\alpha {w_1})^2}{w_1^2} ,
\]
which is strictly increasing with respect to $\alpha$ by \autoref{derivcomparisonneg}(ii). The gap tends to $0$ as $\alpha \to -\infty$, by the asymptotic formulas for the interval stated in \autoref{firsteigen} and \autoref{secondeigen}. 
\end{proof}
\begin{proof}[\bf Proof of \autoref{ratiomonot}]
The first eigenvalue equals $0$ at $\alpha=0$, and is positive when $\alpha>0$ and negative when $\alpha<0$. 
Further, it is concave as a function of $\alpha$, as we observed in \autoref{results} using the characterization of $\lambda_1(\Omega;\alpha)$ as the minimum of the Rayleigh quotient (which  depends linearly on $\alpha$). Hence the difference quotient 
\[
\frac{\lambda_1(\Omega;\alpha)}{\alpha} = \frac{\lambda_1(\Omega;\alpha)-\lambda_1(\Omega;0)}{\alpha-0} 
\]
is positive for all $\alpha \neq 0$, and is decreasing as a function of $\alpha$, by concavity.

The theorem now follows, since 
\[
\alpha \frac{\lambda_2(\Omega;\alpha)}{\lambda_1(\Omega;\alpha)} = \frac{\lambda_2(\Omega;\alpha)}{\lambda_1(\Omega;\alpha)/\alpha} 
\]
where on the right side both the numerator and the denominator are positive, and the numerator is increasing and the denominator is decreasing as a function of $\alpha$, so that the ratio is increasing. 
\end{proof}

\begin{proof}[\bf Proof of \autoref{secondeigconcave}]
\autoref{firsteigenbox} says that the first eigenvalue of the box is found by summing the first eigenvalues of each edge, and similarly for the second eigenvalue of the box in \autoref{secondeigenbox} except in that case one uses the second eigenvalue of the longest edge. Thus it suffices to establish the $1$-dimensional case of the theorem, namely, to show strict concavity with respect to $\alpha$ of the first and second eigenvalues of a fixed interval $\mathcal{I}(t)$. 

If $\alpha > 0$ then $\lambda_1 \big( \mathcal{I}(t); \alpha \big)
= g_1^{-1}(\alpha t)^2/t^2$ by \autoref{firsteigen}, and so \autoref{concaveinverse} gives strict concavity with respect to $\alpha$. If $\alpha < 0$ then $\lambda_1 \big( \mathcal{I}(t); \alpha \big) = - h_1^{-1}(-\alpha t)^2/t^2$ and so again \autoref{concaveinverse}  yields strict concavity. To ensure concavity of the first eigenvalue around the ``join'' at $\alpha=0$, we note the slopes match up from the left and the right there: $g_1(x) \simeq x^2$ and $h_1(x) \simeq x^2$ for $x \simeq 0$, and so $\lambda_1 \big( \mathcal{I}(t); \alpha \big) \simeq \alpha/t$ when $\alpha \simeq 0$. 

If $\alpha > -1/t$ then $\lambda_2 \big( \mathcal{I}(t); \alpha \big) = g_2^{-1}(\alpha t)^2/t^2$ by \autoref{secondeigen} and so \autoref{concaveinverse} proves strict concavity with respect to $\alpha$. If $\alpha < -1/t$ then $\lambda_2 \big( \mathcal{I}(t); \alpha \big) = - h_2^{-1}(-\alpha t)^2/t^2$ and again \autoref{concaveinverse} proves strict concavity. 

For concavity of the second eigenvalue around the join at $\alpha=-1/t$, we will show the slopes from the left and right agree. For the right, we note that $g_2(x) = -x \cot x \simeq -1+x^2/3$ when $x \simeq 0$ and so $g_2^{-1}(y) \simeq \sqrt{3(1+y)}$, hence $\lambda_2 \big( \mathcal{I}(t); \alpha \big) \simeq (3/t)(\alpha+1/t)$ when $\alpha \simeq -1/t$. For the left, $h_2(x) = x \coth x \simeq 1+x^2/3$ when $x \simeq 0$ and so $h_2^{-1}(y) \simeq \sqrt{3(y-1)}$, and hence once again $\lambda_2 \big( \mathcal{I}(t); \alpha \big) \simeq (3/t)(\alpha+1/t)$ when $\alpha \simeq -1/t$. Thus the slopes of the second eigenvalue curve from the left and right are the same at $\alpha=-1/t$, namely $3/t$. Therefore, by our work above, strict concavity holds on a neighborhood of that point, completing the proof. 
\end{proof}

In order to prove the next theorem, we need an elementary convexity result for the norm of a separated vector field. 
\begin{lemma}\label{convexfield}
If $f_1,\dots,f_n$ are nonnegative, strictly convex functions on $\R$ then 
\[
\left| \big( f_1(z_1),\dots,f_n(z_n) \big) \right|
\]
is strictly convex as a function of $z=(z_1,\dots,z_n) \in \Rn$.
\end{lemma}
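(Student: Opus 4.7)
\textbf{Proof plan for \autoref{convexfield}.} Write $\Phi(z) = \big(f_1(z_1),\dots,f_n(z_n)\big)$ and $F(z) = |\Phi(z)|$. Since each $f_j$ is nonnegative, the Cauchy--Schwarz-type duality
\[
F(z) = \sup_{v \in S^{n-1}_+} \sum_{j=1}^n v_j f_j(z_j) , \qquad S^{n-1}_+ = \{v \in S^{n-1} : v_j \geq 0 \text{ for all } j\} ,
\]
holds because the unconstrained supremum over the unit sphere is attained at $v = \Phi(z)/F(z)$ (when $F(z)>0$), which lies in $S^{n-1}_+$; and the zero case is trivial. The plan is to deduce convexity of $F$ from this representation by the standard ``supremum of convex functions'' principle, and then upgrade to strict convexity by carefully examining the extremizing $v^*$ at the midpoint.

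For convexity: for each fixed $v \in S^{n-1}_+$ the map $z \mapsto \sum_j v_j f_j(z_j)$ is a nonnegative linear combination of convex functions of single coordinates, hence convex on $\Rn$; taking the supremum over $v \in S^{n-1}_+$ preserves convexity.

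For strict convexity, fix $z \neq w$ in $\Rn$ and $t \in (0,1)$, set $u = tz + (1-t)w$, and choose $v^* = \Phi(u)/F(u)$ (treating the degenerate case $F(u)=0$ separately, as sketched below). Then
\[
F(u) = \sum_j v^*_j f_j(u_j) \leq \sum_j v^*_j \big( t f_j(z_j) + (1-t) f_j(w_j) \big) \leq t F(z) + (1-t) F(w) ,
\]
where the first inequality comes from strict convexity of each $f_j$ applied coordinatewise and the second is Cauchy--Schwarz. The hard part is to show at least one of these inequalities is strict. Split into two cases. If $v^*_{j_0}>0$ for some index $j_0$ with $z_{j_0}\neq w_{j_0}$, then strict convexity of $f_{j_0}$ forces the first inequality to be strict. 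Otherwise $v^*_j = 0$ (equivalently $f_j(u_j)=0$) for every $j$ with $z_j \neq w_j$; since $f_j$ is strictly convex and nonnegative, $u_j$ is then the unique minimizer of $f_j$, and since $z_j \neq w_j$ at least one of $f_j(z_j), f_j(w_j)$ is positive. Hence $\Phi(z)$ or $\Phi(w)$ has a positive component where $v^*$ vanishes, meaning $v^*$ is not parallel to $\Phi(z)$ or not parallel to $\Phi(w)$, and the corresponding Cauchy--Schwarz step is strict.

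The degenerate case $F(u)=0$ is even easier: each $f_j$ has its unique minimum at $u_j$ with value $0$, so for any $j_0$ with $z_{j_0}\neq w_{j_0}$ at least one of $f_{j_0}(z_{j_0}), f_{j_0}(w_{j_0})$ is strictly positive, giving $tF(z)+(1-t)F(w)>0=F(u)$. The main (mild) obstacle throughout is the bookkeeping around indices where the optimal weight $v^*_j$ happens to vanish; the two-case analysis above is designed precisely to handle that possibility.
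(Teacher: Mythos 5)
Your argument is correct, and it reaches the conclusion by a genuinely different route than the paper. The paper's proof is a direct two-step estimate: writing $\Phi(z)=\big(f_1(z_1),\dots,f_n(z_n)\big)$, the triangle inequality gives $(1-\e)|\Phi(w)|+\e|\Phi(z)|\geq\big|(1-\e)\Phi(w)+\e\Phi(z)\big|$, and then convexity of each $f_j$ together with monotonicity of the Euclidean norm in each nonnegative component gives $\big|(1-\e)\Phi(w)+\e\Phi(z)\big|\geq\big|\Phi\big((1-\e)w+\e z\big)\big|$; for strictness, equality in this second step forces the two componentwise-ordered nonnegative vectors to agree in every component, whence $w_j=z_j$ for all $j$ by strict convexity of each $f_j$. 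You instead represent the norm as a supremum of nonnegative linear functionals over unit vectors in the closed positive orthant, obtain convexity from the supremum-of-convex-functions principle, and extract strictness by analyzing the optimizing dual vector $v^*$ at the midpoint. Both proofs are sound. What the paper's route buys is economy: the equality case of norm monotonicity on the nonnegative orthant immediately yields componentwise equality, which absorbs all the bookkeeping your argument needs around indices with $v^*_j=0$ and around the degenerate case $F(u)=0$. What your route buys is a clean separation of plain convexity (immediate from the supremum representation) from the strictness analysis. One minor remark on your second case: since $u_{j_0}$ is a strict convex combination of $z_{j_0}\neq w_{j_0}$ and is the unique zero of the strictly convex nonnegative function $f_{j_0}$, in fact \emph{both} $f_{j_0}(z_{j_0})>0$ and $f_{j_0}(w_{j_0})>0$, so both Cauchy--Schwarz steps are strict; your weaker ``at least one'' claim still suffices for the conclusion.
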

\begin{proof}
If $w=(w_1,\dots,w_n)$ and $z=(z_1,\dots,z_n)$ are given and $0 < \e < 1$, then by the triangle inequality, 
\begin{align*}
& (1-\e)\left| \big( f_1(w_1),\dots,f_n(w_n) \big) \right| + \e \left| \big( f_1(z_1),\dots,f_n(z_n) \big) \right| \\
 & \geq \left| \big( (1-\e)f_1(w_1) + \e f_1(z_1),\dots,(1-\e)f_n(w_n) + \e f_n(z_n) \big) \right| \\
 & \geq \left| \big( f_1((1-\e)w_1+\e z_1),\dots,f_n((1-\e)w_n+\e z_n) \big) \right| 
\end{align*}
by convexity of $f_1,\dots,f_n$ and the fact that all components of the vectors are nonnegative. Further, if equality holds then $w_1=z_1,\dots,w_n=z_n$ by  strict convexity of $f_1,\dots,f_n$. Thus strict convexity holds in the lemma. 
\end{proof}

\begin{proof}[\bf Proof of \autoref{lambda1higherdim}]
We start with convexity results for the first eigenvalue. Given a vector $z = (z_1,\dots,z_n) \in \Rn$, write
\[
e^z = (e^{z_1},\dots,e^{z_n}) .
\]
We will prove:
\begin{align}
\text{if $\alpha>0$ then $\sqrt{\lambda_1(\mathcal{B}(e^z);\alpha)}$ is a strictly convex function of $z \in \Rn$,} \label{lambda1sqrti} \\
\text{if $\alpha<0$ then $\sqrt{-\lambda_1(\mathcal{B}(e^z);\alpha)}$ is a strictly convex function of $z \in \Rn$.} \label{lambda1sqrtii}
\end{align}
First, \autoref{firsteigenbox} gives when $\alpha>0$ that
\[
\sqrt{\lambda_1(\mathcal{B}(e^z);\alpha)} 
= \alpha \left| \big( \go(\alpha e^{z_1}), \dots , \go(\alpha e^{z_n}) \big) \right| .
\]
Each individual component $\go(\alpha e^{z_j})$ is strictly convex as a function of $z_j$ by \autoref{convexity}, and so \autoref{convexfield} implies conclusion \eqref{lambda1sqrti}. Simlarly, when $\alpha<0$ we have
\[
\sqrt{-\lambda_1(\mathcal{B}(e^z);\alpha)} 
= |\alpha| \left| \big( \ho(|\alpha| e^{z_1}), \dots , \ho(|\alpha| e^{z_n}) \big) \right| .
\]
The components $\ho(|\alpha| e^{z_j})$ are strictly convex as functions of $z_j$, by \autoref{convexity}, and so conclusion \eqref{lambda1sqrtii} follows from \autoref{convexfield}. Now we can prove the theorem. 

(i) Suppose $\alpha>0$. Consider rectangular boxes $\mathcal{B}(e^z)$ of given volume $V$, which means $2e^{z_1} \cdots 2e^{z_n}=V$, or $z_1 + \dots + z_n = \log(2^{-n}V)$. This set of vectors $z$ forms a hyperplane in $\Rn$ perpendicular to the direction $(1,\dots,1)$, and the function $f(z)=\sqrt{\lambda_1(\mathcal{B}(e^z);\alpha)}$ is strictly convex on that hyperplane by \eqref{lambda1sqrti}. 

We want to show $f$ achieves its strict global minimum at the cube. That is, we want $f$ to have a strict global minimum at the point $z=(t,\dots,t)$ where the hyperplane intersects the line through the origin in direction $(1,\dots,1)$. Due to the strict convexity, it suffices to show that the gradient of $f$ restricted to the hyperplane vanishes at this $z$, which means we want $(\nabla f) (t,\dots,t)$ to be parallel to $(1,\dots,1)$. That the gradient vector has this property follows from the invariance of $f(z_1,\dots,z_n)$ under permutation of the variables. 

\emph{Note.} Convexity of $\lambda_1(\mathcal{B}(e^z);\alpha)$ was proved by Keady and Wiwatanapataphee \cite[Corollary 2]{KW18} when $\alpha>0$. That convexity is weaker than \eqref{lambda1sqrti}, where the square root is imposed on the eigenvalue, but it was strong enough for them to prove part (i) of \autoref{lambda1higherdim}.

\smallskip
(ii) Suppose $\alpha<0$. Argue as in part (i), except this time using \eqref{lambda1sqrtii} instead of \eqref{lambda1sqrti} and letting $f(z)=\sqrt{-\lambda_1(\mathcal{B}(e^z);\alpha)}$. 
\end{proof}
\begin{proof}[\bf Proof of \autoref{lambda1L}]
By scale invariance of the expression $\lambda_1(\mathcal{R};\alpha/L)A$ we may assume the rectangle has perimeter $L=2$. That is, we need only consider the family of rectangles 
\[
\mathcal{R}(p) = (0,p) \times (0,1-p) ,
\]
where $0 < p < 1$. Clearly these rectangles have perimeter $2$ and area $p(1-p)$.

(i) First suppose $\alpha>0$. We claim $\lambda_1 \big( \mathcal{R}(p);\alpha/2 \big) A \big( \mathcal{R}(p) \big)$ is strictly convex as a function of $p \in (0,1)$, and hence is strictly decreasing for $p \in (0,1/2]$ and strictly increasing for $p \in [1/2,1)$, with its minimum at $p=1/2$ (the square).

By \autoref{firsteigenbox} applied with $\alpha/2$ instead of $\alpha$, and with ${w_1}=p/2$ and ${w_2}=(1-p)/2$, we have 
\[
\lambda_1(\mathcal{R}(p);\alpha/2) A \big( \mathcal{R}(p) \big) = (\alpha/2)^2 \left( \go(\alpha p/4)^2 + \go(\alpha (1-p)/4)^2 \right) p(1-p) .
\]
The function $p \mapsto \go(\alpha p/4)^2 \, p(1-p)$ is strictly convex for $0<p<1$ by \autoref{convexityyy}(i), and replacing $p$ by $1-p$ shows that $p \mapsto \go(\alpha (1-p)/4)^2 \, p(1-p)$ is strictly convex also. Clearly $\lambda_1 \big( \mathcal{R}(p);\alpha/2 \big) A \big( \mathcal{R}(p) \big)$ is even with respect to $p=1/2$ since the rectangle $\mathcal{R}(1-p)$ is the same as $\mathcal{R}(p)$ except rotated by angle $\pi/2$. Thus by the strict convexity just proved, the function $p \mapsto \lambda_1 \big( \mathcal{R}(p);\alpha/2 \big) A \big( \mathcal{R}(p) \big)$ must be strictly decreasing for $p \in (0,1/2]$ and strictly increasing for $p \in [1/2,1)$. 

(ii) Next suppose $\alpha<0$. We claim $\lambda_1 \big( \mathcal{R}(p);\alpha/2 \big) A \big( \mathcal{R}(p) \big)$ is strictly decreasing for $p \in (0,1/2]$ and strictly increasing for $p \in [1/2,1)$, so that again the minimum occurs for the square, $p=1/2$. 

\autoref{firsteigenbox} with ${w_1}=p/2$ and ${w_2}=(1-p)/2$ gives that  
\begin{equation} \label{firstLeq}
-\lambda_1(\mathcal{R}(p);\alpha/2) A \big( \mathcal{R}(p) \big) = (\beta/2)^2 \left( \ho(\beta p/4)^2 + \ho(\beta (1-p)/4)^2 \right) p(1-p)
\end{equation}
where $\beta=-\alpha>0$. To prove the claim it suffices to show the existence of a number $p(\beta)$ with 
\[
0 \leq p(\beta) < \frac{1}{2} 
\]
such that the right side of \eqref{firstLeq} is strictly increasing on $\big( 0, p(\beta) \big)$, strictly concave on $\big( p(\beta),1-p(\beta) \big)$, and strictly decreasing on $\big( 1-p(\beta) , 1\big)$ --- because then the evenness of \eqref{firstLeq} under $p \mapsto 1-p$ guarantees that the right side of \eqref{firstLeq} is strictly increasing on $(0,1/2]$ and strictly decreasing on $[1/2,1)$. 

In fact, we need only show that the term $p \mapsto \ho(\beta p/4)^2 p(1-p)$ is strictly increasing on $\big( 0, p(\beta) )$, strictly concave on $\big( p(\beta),1-p(\beta) \big)$, and strictly decreasing on $\big( 1-p(\beta),1 \big)$, because then the same holds true when we replace $p$ by $1-p$, and adding two functions with these properties yields another function with these properties. 

\autoref{concavityH1} establishes the desired properties with $p(\beta)=y_1(\beta/4)$, and in fact establishes a little more, namely that $\ho(\beta p/4)^2 p(1-p)$ is strictly concave and strictly decreasing on the whole interval $\big( p(\beta),1 \big)$. Thus the theorem is proved. 
\end{proof}
\begin{proof}[\bf Proof of \autoref{linearbound}]
We extend the $2$-dimensional proof given by Freitas and Laugesen \cite[Theorem A]{FL18b}. Substituting the constant trial function $u(x) \equiv 1$ into the Rayleigh quotient gives the upper bound
\[
   \lambda_1(\Omega;\alpha V^{1-2/n}/S) V^{2/n} \leq \frac{0 + (\alpha V^{1-2/n}/S)\int_{\partial \Omega} 1^2 \, dS}{\int_\Omega 1^2 \, dx} \, V^{2/n} = \alpha.
\]
We show this inequality must be strict. If equality held, then the constant trial function $u \equiv 1$ would be a first eigenfunction, and taking the Laplacian of it would imply $\lambda_1(\Omega;\alpha V^{1-2/n}/S) = 0$, and hence $\alpha=0$, contradicting the hypothesis in the theorem. Hence equality cannot hold and the inequality is strict. 

To show equality is attained asymptotically for rectangular boxes that degenerate, consider a box $\mathcal{B}(w)$ and assume the volume is fixed, say $V=1$ for convenience. Suppose the box degenerates, which means the surface area tends to infinity. The surface area is 
\[
S = 2 \sum_{k=1}^n \frac{2w_1 \dots 2w_n}{2w_k} = \sum_{k=1}^n \frac{1}{w_k} ,
\]
since $V=2w_1 \cdot \dots \cdot 2w_n=1$. 
For $\alpha>0$ we have
\begin{align*}
\lambda_1\big(\mathcal{B}(w);\alpha/S\big) 
& = \sum_{k=1}^n g_1^{-1}(w_k \alpha/S)^2/w_k^2 \qquad \text{by \autoref{firsteigenbox}} \\
& \geq \alpha - n\alpha^2/S^2 \qquad \text{since $g_1^{-1}(y)^2 \geq y-y^2$ by \autoref{inversebounds}} \\
& \to \alpha \qquad \qquad \qquad \text{as $S \to \infty$.}
\end{align*}
When $\alpha<0$ the proof is similar, except replacing $g_1^{-1}(w_k \alpha/S)^2$ with $-h_1^{-1}(w_k |\alpha|/S)^2$. 
\end{proof}
\begin{proof}[\bf Proof of \autoref{lambda2higherdim}]
The second eigenvalue of the box is 
\begin{equation} \label{eigentwo}
\lambda_2\big( \mathcal{B}(w) ; \alpha \big) = \lambda_2\big( \mathcal{I}(w_1) ; \alpha \big) + \lambda_1\big( \mathcal{I}(w_2) ; \alpha \big) + \dots + \lambda_1\big( \mathcal{I}(w_n) ; \alpha \big) 
\end{equation}
by \autoref{secondeigenbox}, where we take $w_1$ to be the largest of the $w_j$, that is, we assume the first edge of the box is its longest. 

When $\alpha=0$ (the Neumann case), the theorem is easy and well known: 
\[
\lambda_2\big( \mathcal{B}(w) ; \alpha \big) = \left( \frac{\pi}{2w_1} \right)^{\! \! 2} 
\]
and this expression is largest when the box is a cube having the same volume as the original box $\mathcal{B}(w)$, because in that case the longest side is as short as possible. 

Next suppose $\alpha<0$. We proceed in two steps. First we equalize the shorter edges of the box. Let 
\[
w_2^* = \dots = w_n^* =(w_2 \cdots w_n)^{1/(n-1)} 
\]
so that $w_j^* \leq w_1$, and define $\widehat{w}=(w_2,\dots,w_n) , \widehat{w}^*=(w_2^*,\dots,w_n^*) \in \R^{n-1}$. The $(n-1)$-dimensional boxes $\mathcal{B}(\widehat{w})$ and $\mathcal{B}(\widehat{w}^*)$ have the same volume, since $w_2 \cdots w_n = w_2^* \cdots w_n^*$. Formula \eqref{eigentwo} and maximality of the cube for the first eigenvalue when $\alpha<0$, from \autoref{lambda1higherdim}(ii), together show that 
\begin{align*}
\lambda_2\big( \mathcal{B}(w) ; \alpha \big) 
& = \lambda_2\big( \mathcal{I}(w_1) ; \alpha \big) + \lambda_1\big( \mathcal{B}(\widehat{w}) ; \alpha \big) \\
& \leq \lambda_2\big( \mathcal{I}(w_1) ; \alpha \big) + \lambda_1\big( \mathcal{B}(\widehat{w}^*) ; \alpha \big) ,
\end{align*}
with equality if and only if $w_2=\dots =w_n$. 

Next we equalize the first edge as well. Let  
\[
t = (w_1 w_2 \cdots w_n)^{1/n} = (w_1 w_2^* \cdots w_n^*)^{1/n} ,
\]
so that $w_j^* \leq t \leq w_1$ for each $j$. Then 
\begin{align*}
\lambda_2\big( \mathcal{I}(w_1) ; \alpha \big) + \lambda_1\big( \mathcal{B}(\widehat{w}^*) ; \alpha \big) 
& = \lambda_2\big( \mathcal{I}(w_1) ; \alpha \big) + \lambda_1\big( \mathcal{I}(w_2^*) ; \alpha \big) + \dots + \lambda_1\big( \mathcal{I}(w_n^*) ; \alpha \big) \\
& \leq \lambda_2\big( \mathcal{I}(t) ; \alpha \big) + \lambda_1\big( \mathcal{I}(t) ; \alpha \big) + \dots + \lambda_1\big( \mathcal{I}(t) ; \alpha \big)
\end{align*}
by the strict monotonicity properties of $\lambda_1$ and $\lambda_2$ with respect to the length of the interval, in \autoref{1dimneg}, when $\alpha < 0$. Equality holds if and only if $w_1=t$ and $w_j^*=t$. Putting together our inequalities, we conclude  
\[
\lambda_2\big( \mathcal{B}(w) ; \alpha \big) \leq \lambda_2\big( \mathcal{B}(t,\dots,t) ; \alpha \big) 
\]
with equality if and only if $w=(t,\dots,t)$. That is, $\lambda_2(\mathcal{B};\alpha)$ is maximal for the cube and only the cube, among rectangular boxes $\mathcal{B}$ of given volume. 
\end{proof}

\begin{proof}[\bf Proof of \autoref{sigma1higherdim}]
The Steklov eigenvalue problem for the Laplacian is
\[
\begin{split}
\Delta u & = 0 \ \ \quad \text{in $\Omega$,} \\
\frac{\partial u}{\partial\nu} & = \sigma u \quad \text{on $\partial \Omega$,} 
\end{split}
\]
where the eigenvalues are $0=\sigma_0 < \sigma_1 \leq \sigma_2 \leq \dots$. Clearly $\sigma$ belongs to the Steklov spectrum exactly when $0$ belongs to the Robin spectrum for parameter $\alpha=-\sigma$. In particular, $\alpha=-\sigma_1$ is the horizontal intercept value for the second Robin spectral curve, meaning $\lambda_2(\Omega;-\sigma_1)=0$. 

Let $\mathcal{C}$ be a cube having the same volume as the box $\mathcal{B}$. \autoref{lambda2higherdim} says $\lambda_2$ is smaller for $\mathcal{B}$ than for $\mathcal{C}$, at each $\alpha$, and since the eigenvalues are increasing with respect to $\alpha$, we conclude the horizontal intercept is larger (less negative) for $\mathcal{B}$ than for $\mathcal{C}$. In other words, $\sigma_1(\mathcal{B}) \leq \sigma_1(\mathcal{C})$. The inequality is strict due to the strictness in \autoref{lambda2higherdim}. 

A more detailed account of this proof goes as follows. From \autoref{secondeigenbox} and results in \autoref{identifyinginterval} we know $\lambda_2(\mathcal{B};\alpha)$ is continuous and strictly increasing as a function of $\alpha$, and tends to $-\infty$ as $\alpha \to -\infty$, and is positive at $\alpha=0$. Hence there is a unique horizontal intercept value $\alpha_{\mathcal{B}}<0$ at which $\lambda_2(\mathcal{B};\alpha_{\mathcal{B}})=0$. Note $\sigma_1(\mathcal{B})=-\alpha_{\mathcal{B}}$, since the fact that $\lambda_1(\mathcal{B};\alpha) < 0 <\lambda_2(\mathcal{B};\alpha)$ for all $\alpha \in (\alpha_{\mathcal{B}},0)$ implies that no $\alpha$-value in that interval corresponds to a Steklov eigenvalue for $\mathcal{B}$. Similarly there is a unique horizontal intercept value $\alpha_{\mathcal{C}}<0$ at which $\lambda_2(\mathcal{C};\alpha_{\mathcal{C}})=0$, and $\sigma_1(\mathcal{C})=-\alpha_{\mathcal{C}}$. 

Choosing $\alpha=\alpha_{\mathcal{B}}$ in \autoref{lambda2higherdim} gives that
\[
0 = \lambda_2(\mathcal{B};\alpha_{\mathcal{B}}) \leq \lambda_2(\mathcal{C};\alpha_{\mathcal{B}}) ,
\]
with strict inequality unless the box $\mathcal{B}$ is a cube. Because the eigenvalues are strictly increasing functions of $\alpha$, it follows that $\alpha_{\mathcal{C}} \leq \alpha_{\mathcal{B}}$ with strict inequality unless the box is a cube. That is, $\sigma_1(\mathcal{C}) \geq \sigma_1(\mathcal{B})$ with strict inequality unless the box is a cube. 
\end{proof}

\begin{proof}[\bf Proof of \autoref{lambda2L}] By scale invariance, it suffices to prove the theorem for the family of rectangles $\mathcal{R}(p) = (0,p) \times (0,1-p)$. These rectangles have perimeter $L=2$ and area $A=p(1-p)$, and so the quantity to be maximized is
\[
Q(p) = \lambda_2\big( \mathcal{R}(p);\alpha/2\big) p(1-p) .
\]
We may assume $p \in (0,1/2]$, so that the long side has length $1-p$ and the short side has length $p$. Then by  \autoref{secondeigenbox}, the second eigenvalue of the rectangle equals 
\begin{equation} \label{normalizedsecond}
\lambda_2\big( \mathcal{R}(p);\alpha/2\big) = \lambda_2\big( (0,1-p); \alpha/2\big) + \lambda_1\big( (0,p); \alpha/2\big) , \qquad p \in (0,1/2] .
\end{equation}

Step 1. We start by proving inequalities for the square ($p=1/2$), specifically that
\begin{align} 
\lambda_2\big( \mathcal{R}(1/2);\alpha/2\big)/4 & > \alpha, \qquad \alpha \in (\alpha_-,\alpha_+) , \label{biggeralpha1} \\
\lambda_2\big( \mathcal{R}(1/2);\alpha/2\big)/4 & < \alpha, \qquad \alpha \notin [\alpha_-,\alpha_+] . \label{biggeralpha2} 
\end{align}
Equality in \eqref{biggeralpha1} would mean
\begin{equation} \label{eq:concavesquare}
\lambda_2\big( \mathcal{I}(1/4) ; \alpha/2 \big) + \lambda_1\big( \mathcal{I}(1/4) ; \alpha/2 \big) = 4\alpha ,
\end{equation}
which when $\alpha>0$ reduces to 
\[
g_2^{-1}(\alpha/8)^2 + g_1^{-1}(\alpha/8)^2 = \alpha/4
\]
by applying the interval eigenvalue formulas in \autoref{firsteigen} and \autoref{secondeigen}. Thus equality holds at $\alpha_+ \simeq 33.2$ by definition \eqref{alphaplus}. When $\alpha<-8$, equality \eqref{eq:concavesquare} reduces to 
\[
-h_2^{-1}(-\alpha/8)^2 - h_1^{-1}(-\alpha/8)^2 = \alpha/4 , 
\]
and so equality holds at $\alpha_- \simeq -9.4$ by definition \eqref{alphaneg}. The strict inequalities \eqref{biggeralpha1} and \eqref{biggeralpha2} now follow from strict concavity of the second eigenvalue of the fixed rectangle $\mathcal{R}(1/2)$ as a function of $\alpha \in \R$ (\autoref{secondeigconcave}).  

\smallskip
Step 2. Next we establish convexity facts for the interval, on various ranges of $\alpha$-values. If $\alpha>0$ then 
\begin{align}
p \mapsto \lambda_1\big( (0,p); \alpha\big) p(1-p) \qquad \text{is strictly convex for $p \in (0,1)$,} \label{firstconvex} \\
p \mapsto \lambda_2\big( (0,p); \alpha\big) p(1-p) \qquad \text{is strictly convex for $p \in (0,1)$.} \label{secondconvex} 
\end{align}
Claim \eqref{firstconvex} holds by \autoref{convexityyy}(i), since $\lambda_1\big( (0,p); \alpha\big) = \alpha^2 \go( \alpha p/2)^2$ by \autoref{firsteigen} applied with $t=p/2$. Similarly claim \eqref{secondconvex} holds by \autoref{convexityyy}(ii), since $\lambda_2\big((0,p);\alpha \big) = \alpha^2 \gt(\alpha p/2)^2$ by \autoref{secondeigen}. 

If $-6 \leq \alpha < 0$ then 
\begin{equation} \label{specialalpha}
\text{$p \mapsto \lambda_1\big( (0,p); \alpha\big) p(1-p)$ is strictly increasing and strictly convex for $p \in (0,1)$,}
\end{equation}
by applying \autoref{concavityH1} with $c=|\alpha|/2 \leq 3$. 

If $\alpha < -6$ then 
\begin{align} 
\text{$p \mapsto \lambda_1\big( (0,p); \alpha\big) p(1-p)$ is strictly decreasing for\ } & \text{$p \in \big(0,y_1(|\alpha|/2)\big)$} \label{specialalpha6} \\
\text{and is strictly increasing and strictly convex for\ } & \text{$p \in \big(y_1(|\alpha|/2),1\big)$,} \label{specialalpha7}
\end{align}
by \autoref{concavityH1} applied with $c=|\alpha|/2 > 3$. The lemma showed $0<y_1(|\alpha|/2)<1/2$.

Now we claim when $\alpha < 0$ that the second eigenvalue satisfies:
\begin{align} 
\text{$p \mapsto \lambda_2\big( (0,p); \alpha\big) p(1-p)$ is strictly decreasing when $p \in \big(0,\min(1,2/|\alpha|) \big)$,} \label{specialalpha2} \\
\text{$\lambda_2\big( (0,p); \alpha\big) < 0$ when $p \in \big(\! \min(1,2/|\alpha|),1 \big)$.} \label{specialalpha3}
\end{align}
Indeed, if $p<2/|\alpha|$ then $\alpha>-2/p$ and so $\lambda_2\big( (0,p); \alpha\big) = \alpha^2 \gt(\alpha p/2)^2$ by \autoref{secondeigen}. Thus \eqref{specialalpha2} holds by \autoref{convexityyy}(iii) with $c=|\alpha|/2$. For \eqref{specialalpha3}, if $p>2/|\alpha|$ then $\alpha<-2/p$ and so  $\lambda_2\big( (0,p); \alpha\big) = -\alpha^2 \htwo(-\alpha p/2)^2 < 0$ by \autoref{secondeigen}. 

Further, if $\alpha < -2$ then the second eigenvalue satisfies that  
\begin{align}
\text{$p \mapsto \lambda_2\big( (0,p); \alpha\big) p(1-p)$ is strictly convex when\ } & \text{$p \in \big( 2/|\alpha|,1 \big)$} \label{specialalpha4} \\
\text{and strictly increasing when\ } & \text{$p \in \big( y_2(|\alpha|/2),1 \big)$.} \label{specialalpha5} 
\end{align}
Here, \autoref{concavityhtwo} with $c=|\alpha|/2>1$ ensures that $y_2(|\alpha|/2) \geq 2/|\alpha|$ and so the $p$ values in \eqref{specialalpha4} and \eqref{specialalpha5} satisfy $p>2/|\alpha|$. Hence $\lambda_2\big( (0,p); \alpha\big)= -\alpha^2 \htwo(|\alpha| p/2)^2$, and applying \autoref{concavityhtwo} yields \eqref{specialalpha4} and \eqref{specialalpha5}. That lemma also gives that 
\begin{equation} \label{y1y2l1}
y_1(|\alpha|/2) < 1 - y_2(|\alpha|/2) .
\end{equation}

\smallskip
Step 3. At last we may prove the theorem. 

\smallskip
(i) Suppose $\alpha>0$. Observe $Q(p)$ is strictly convex for $p \in (0,1/2]$, by \eqref{normalizedsecond}, \eqref{firstconvex} and \eqref{secondconvex}. It follows that the maximum of $Q(p)$ occurs either as $p \to 0$ or at $p=1/2$. As the rectangle degenerates, the limiting value is $\lim_{p \to 0} Q(p) = \alpha$, since $\lambda_2\big( (0,1-p); \alpha/2\big)$ converges to the finite value $\lambda_2\big( (0,1); \alpha/2\big)$ while $\lambda_1\big( (0,p); \alpha/2\big) \sim (\alpha/2)/(p/2)=\alpha/p$ as $p \to 0$ (using the blow-up rate from the proof of \autoref{1dimpos}). Meanwhile, the square has $Q(1/2) = \lambda_2\big( \mathcal{R}(1/2);\alpha/2\big)/4$. It follows from \eqref{biggeralpha1} and \eqref{biggeralpha2} that when $(0,\alpha_+)$ the maximum of $Q(p)$ occurs at $p=1/2$, and when $\alpha \in (\alpha_+,\infty)$ the maximum is achieved in the limit as $p \to 0$. 

In the borderline case $\alpha=\alpha_+$, equality holds in \eqref{biggeralpha1} and so $\lim_{p \to 0} Q(p) = Q(1/2)$, from which strict convexity of $Q$ implies $Q(p) < Q(1/2)$ for all $p \in (0,1/2)$. Therefore the square gives the largest value for $Q$. 

\smallskip
(ii) Suppose $\alpha=0$, in which case the second Neumann eigenvalue of the rectangle is $\pi^2/(1-p)^2$, remembering here that the long side has length $1-p$. Multiplying by the area $p(1-p)$ gives $\pi^2 p/(1-p)$, which for $p \in (0,1/2]$ is strictly maximal at $p=1/2$. In other words, the square maximizes the area-normalized second eigenvalue.  

\smallskip
(iii) Suppose $-4 \leq \alpha < 0$. Then $Q(p)$ is strictly increasing when $p \in (0,1/2]$, by using \eqref{normalizedsecond} and applying \eqref{specialalpha} with $\alpha/2$ instead of $\alpha$, and applying \eqref{specialalpha2} with $\alpha/2$ instead of $\alpha$ and $1-p$ instead of $p$. (The assumption $-4 \leq \alpha < 0$ ensures when using \eqref{specialalpha2} that $\min(1,2/|\alpha/2|)=1$.) Hence $Q(p)$ achieves its maximum at $p=1/2$ (the square). 

\smallskip
(iv) Suppose $-8 \leq \alpha < 4$, so that $\min(1,2/|\alpha/2|)=4/|\alpha|$. The argument in the preceding paragraph gives this time that $Q(p)$ is strictly increasing when $p \in (q(\alpha),1/2]$, where $q(\alpha)=1-4/|\alpha| \in (0,1/2]$. We will show $Q(p) < Q(q(\alpha))$ when $p \in (0,q(\alpha))$, so that once again $p=1/2$ gives the maximum of $Q$. To show $Q(p) < Q(q(\alpha))$, observe that $\lambda_1\big( (0,p); \alpha/2\big) p(1-p)$ is strictly increasing in $p$ by \eqref{specialalpha}, while the second eigenvalue $\lambda_2\big( (0,1-p); \alpha/2\big)$ equals zero at $p=q(\alpha)$ (by \autoref{secondeigen}, since $\alpha/2=-2/(1-q(\alpha))$) and is negative when $0 < p < q(\alpha)$ (by applying \eqref{specialalpha3}). 

\smallskip
(v) Suppose $-12 \leq \alpha < -8$. Let $c=|\alpha|/4 \leq 3$. From \eqref{specialalpha} with $\alpha/2$ in place of $\alpha$ we know $\lambda_1\big( (0,p); \alpha/2\big) p(1-p)$ is strictly convex for $p \in (0,1)$. From \eqref{specialalpha4} with $p$ replaced by $1-p$ we see that $\lambda_2\big( (0,1-p); \alpha/2\big) p(1-p)$ is strictly convex for $p \in (0,1-4/|\alpha|)$. This interval includes $(0,1/2]$ because $\alpha < -8$. Hence $Q(p)$ is strictly convex for $p \in (0,1/2]$, by \eqref{normalizedsecond}, and so the maximum of $Q$ occurs either as $p \to 0$ or at $p=1/2$. The limiting value as the rectangle degenerates is $\lim_{p \to 0} Q(p) = \alpha$ since $\lambda_1\big( (0,p); \alpha/2\big) \sim (\alpha/2)/(p/2) = \alpha/p$ as $p \to 0$ (using the blow-up rate from the proof of \autoref{1dimneg}). Thus when $\alpha \in [-12,\alpha_-)$ or $\alpha \in (\alpha_-,-8)$, the theorem follows from the comparison of the square and the degenerate rectangle in \eqref{biggeralpha1} and \eqref{biggeralpha2}. In the borderline case $\alpha=\alpha_-$, the square ($p=1/2$) gives the largest eigenvalue, by arguing as for the borderline case $\alpha=\alpha_+$ in part (i) above.

\smallskip
(vi) Suppose $\alpha< -12$. The normalized first eigenvalue $\lambda_1\big( (0,p); \alpha/2\big) p(1-p)$ is strictly convex for $p \in \big(y_1(|\alpha|/4),1\big)$, by \eqref{specialalpha7} with $\alpha/2$ in place of $\alpha$. Recall from \autoref{concavityH1} with $c=|\alpha|/4>3$ that the number $y_1(|\alpha|/4)$ lies between $0$ and $1/2$. Meanwhile, $\lambda_2\big( (0,1-p); \alpha/2\big) p(1-p)$ is strictly convex for $p \in (0,1/2]$, as observed above in part (v). Adding these two convex functions shows that $Q(p)$ is strictly convex for $p \in \big(y_1(|\alpha|/4),1/2\big]$. Thus $Q$ attains its maximum on that interval at one of the endpoints. 

On the remaining interval $\big(0,y_1(|\alpha|/4)\big)$, we will show $Q$ is strictly decreasing and hence attains its maximum at the left endpoint (as $p \to 0$). Armed with that fact, one completes the proof for $\alpha<-12$ by recalling from \eqref{biggeralpha2} that the function $Q(p)$ attains a bigger value as $p \to 0$ than it does at $p=1/2$. 

To show $Q$ is strictly decreasing on $\big(0,y_1(|\alpha|/4)\big)$, note $\lambda_1\big( (0,p); \alpha/2\big) p(1-p)$ is strictly decreasing for $p \in \big(0,y_1(|\alpha|/4)\big)$, by \eqref{specialalpha6}. Further, $\lambda_2\big( (0,1-p); \alpha/2\big) p(1-p)$ is strictly decreasing for $p \in \big(0,1-y_2(|\alpha|/4)\big)$, by replacing $\alpha$ with $\alpha/2$ and $p$ with $1-p$ in \eqref{specialalpha5}. That last interval contains $\big(0,y_1(|\alpha|/4)\big)$, due to \eqref{y1y2l1}, and so $Q(p)$ is strictly decreasing on $\big(0,y_1(|\alpha|/4)\big)$. 
\end{proof}

\begin{proof}[\bf Proof of \autoref{steklov}]
See the proof of \autoref{sigma1higherdim} for the relationship between the Steklov and Robin spectra. 

After rescaling the rectangle $\mathcal{R}$, we may suppose it has area $4$. Write $\mathcal{S}$ for the square of sidelength $2$ and hence area $4$ and perimeter $8$. \autoref{squareexample} gives that $\lambda_2(\mathcal{S};\alpha_0)=0$ where $\alpha_0 \simeq -0.68825$, and so $\sigma_1(\mathcal{S})= |\alpha_0|$. Thus the task is to prove $\sigma_1(\mathcal{R}) L(\mathcal{R}) \leq 8|\alpha_0|$, with equality if and only if the rectangle is a square. 

Choosing $\alpha= 8\alpha_0 \simeq -5.5$ in \autoref{lambda2L} yields that 
\begin{equation} \label{eq:squarebound}
\lambda_2\big( \mathcal{R};8\alpha_0/L(\mathcal{R}) \big) \leq \lambda_2\big(\mathcal{S};8\alpha_0/L(\mathcal{S})\big) = \lambda_2(\mathcal{S};\alpha_0) =0 .
\end{equation}
Also $\lambda_2(\mathcal{R};0)$ is positive. Since $\lambda_2(\mathcal{R};\alpha)$ is a continuous, strictly increasing function of $\alpha$, it follows that a unique number $\widetilde{\alpha} \in [8\alpha_0,0)$ exists for which $\lambda_2\big( \mathcal{R};\widetilde{\alpha}/L(\mathcal{R}) \big) = 0$. Hence $-\widetilde{\alpha}/L(\mathcal{R})=\sigma_1(\mathcal{R})$, and so $\sigma_1(\mathcal{R}) L(\mathcal{R}) = -\widetilde{\alpha} \leq 8|\alpha_0|$, as we needed to show. 

If equality holds then equality holds in \eqref{eq:squarebound}, and so the equality statement in \autoref{lambda2L}  implies $\mathcal{R}$ is a square. 
\end{proof}
\begin{proof}[\bf Proof of \autoref{gapD}]
\autoref{boxgap} shows the spectral gap for the box equals the spectral gap of its longest edge:
\[
(\lambda_2-\lambda_1)(\mathcal{B};\alpha) = (\lambda_2-\lambda_1)((0,s);\alpha)
\]
where we write $s$ for the length of the longest edge of the box. Since $s<D$ and the spectral gap of an interval is strictly decreasing as a function of the length (by \autoref{1dimpos}, \autoref{1dimzero}  and \autoref{1dimneg}), the conclusion of the theorem follows. 
\end{proof}
\begin{proof}[\bf Proof of \autoref{gapSV}]
Arguing as in the preceding proof, we see that to maximize the gap we must minimize the longest side $s$ of the box, subject to the constraint of fixed diameter. That is, we want to minimize the scale invariant ratio $s/D$ among boxes. The minimum is easily seen to occur for the cube, by fixing $s$ and increasing all the other side lengths to increase the diameter. 

The argument is similar under a surface area constraint since the scale invariant ratio $s^{n-1}/S$ is minimal among boxes for the cube, and under a volume constraint too since the ratio $s^n/V$ is minimal for the cube. 

\emph{Comment.} The version of the theorem with diameter constraint implies the one with volume constraint, since $s^n/V=(s/D)^n(D^n/V)$ and each ratio on the right is minimal at the cube. Similarly, the result with surface area constraint implies the one with volume constraint, since $s^n/V = (s^{n-1}/S)^{n/(n-1)}(S^{n/(n-1)}/V)$ and each ratio on the right is minimal at the cube. 
\end{proof}
\begin{proof}[\bf Proof of \autoref{ratio}]
The ratio can be rewritten in terms of the spectral gap as
\[
\frac{\lambda_2(\mathcal{B};\alpha)}{|\lambda_1(\mathcal{B};\alpha)|}
= 
\frac{\lambda_2(\mathcal{B};\alpha)-\lambda_1(\mathcal{B};\alpha)}{|\lambda_1(\mathcal{B};\alpha)|} + \sign(\alpha) .
\]
The numerator on the right is maximal for the cube having the same volume as $\mathcal{B}$, by \autoref{gapSV}, while the denominator is minimal for that cube by \autoref{lambda1higherdim}. Hence the ratio is maximal for the cube. 
\end{proof}
\begin{proof}[\bf Proof of \autoref{ratio2dim}]
In terms of the spectral gap, the ratio is
\[
\frac{\lambda_2(\mathcal{R};\alpha/L)}{\lambda_1(\mathcal{R};\alpha/L)}
= 
\frac{\lambda_2(\mathcal{R};\alpha/L)-\lambda_1(\mathcal{R};\alpha/L)}{\lambda_1(\mathcal{R};\alpha/L)A} A + 1 .
\]
The numerator on the right is maximal for the square having the same boundary length $L$ as $\mathcal{R}$, by the ``surface area'' version of \autoref{gapSV} applied with $\alpha/L$ instead of $\alpha$. And of course, the factor of $A$ is largest for the same square, by the isoperimetric inequality for rectangles. Meanwhile the denominator on the right side is positive (since $\alpha>0$) and is minimal for the square by \autoref{lambda1L}. Hence the right side is maximal for the square. 
\end{proof}
\begin{proof}[\bf Proof of \autoref{hearingdrum}]
After a rotation and translation, we may write the rectangle as $\mathcal{R} = \mathcal{I}(t) \times \mathcal{I}(s)$ where $t \geq s$. The first and second eigenvalues of this rectangle are the given information, and the task is to determine the side lengths $t$ and $s$. 

In terms of $t$ and $s$, the eigenvalues are   
\begin{align*}
\lambda_1(\mathcal{R};\alpha) & = \lambda_1(\mathcal{I}(t);\alpha) + \lambda_1(\mathcal{I}(s);\alpha) , \\
\lambda_2(\mathcal{R};\alpha) & = \lambda_2(\mathcal{I}(t);\alpha) + \lambda_1(\mathcal{I}(s);\alpha) , 
\end{align*}
by \autoref{firsteigenbox} and \autoref{secondeigenbox}. Subtracting, we obtain the spectral gap as 
\[
(\lambda_2-\lambda_1)(\mathcal{R};\alpha) = (\lambda_2-\lambda_1)(\mathcal{I}(t);\alpha) ,
\]
and so the value of the left side is also given information. The right side is the spectral gap of the interval $\mathcal{I}(t)$, which is a strictly decreasing function $t$ by \autoref{1dimpos} and \autoref{1dimneg}. Hence the longer sidelength $t$ of the rectangle is uniquely determined by the given information. 

The value of $\lambda_1(\mathcal{I}(s);\alpha)$ can then be determined from the formulas above. This eigenvalue depends strictly monotonically on the length $s$, by \autoref{1dimpos} and \autoref{1dimneg}, and hence the value of $s$ is uniquely determined. 

\emph{Comment.} The final step of the proof is where  the assumption $\alpha \neq 0$ is used --- when $\alpha=0$ the first eigenvalue is zero for every interval and hence is not strictly monotonic as a function of the length.
\end{proof}

\section*{Acknowledgments}
This research was supported by a grant from the Simons Foundation (\#429422 to Richard Laugesen) and travel support from the University of Illinois Scholars' Travel Fund. Conversations  with Dorin Bucur were particularly helpful, at the conference ``Results in Contemporary Mathematical Physics'' in honor of Rafael Benguria (Santiago, Chile, December 2018). I am grateful to Derek Kielty for carrying out numerical investigations in support of this research and pointing out relevant literature, and to Pedro Freitas for many informative conversations about Robin eigenvalues.

\end{document}